\let\latexcirc=\circ
\newcommand{\ccirc}{\mathbin{\mathchoice
  {\xcirc\scriptstyle}
  {\xcirc\scriptstyle}
  {\xcirc\scriptscriptstyle}
  {\xcirc\scriptscriptstyle}
}}
\newcommand{\xcirc}[1]{\vcenter{\hbox{$#1\latexcirc$}}}
\let\circ\ccirc
\def\Ddots{\mathinner{\mkern1mu\raise\p@
\vbox{\kern7\p@\hbox{.}}\mkern2mu
\raise4\p@\hbox{.}\mkern2mu\raise7\p@\hbox{.}\mkern1mu}}
\newtheorem{theorem}{Theorem}[section]
\newtheorem{proposition}[theorem]{Proposition}
\newtheorem{proposition/definition}[theorem]{Proposition/Definition}
\newtheorem{lemma}[theorem]{Lemma}
\newtheorem{corollary}[theorem]{Corollary}
\theoremstyle{definition}
\theoremstyle{remark}
\newcommand{\tp}{{\scriptscriptstyle\mathsf{T}}}
\newcommand{\F}{{\scriptscriptstyle\mathsf{F}}}
\let\O\undefined
\let\S\undefined
\DeclareMathOperator{\O}{O}
\DeclareMathOperator{\S}{S}
\DeclareMathOperator{\I}{I}
\DeclareMathOperator{\T}{\mathbb{T}}
\DeclareMathOperator{\J}{J}
\DeclareMathOperator{\Gr}{Gr}
\DeclareMathOperator{\Flag}{Flag}
\DeclareMathOperator{\tr}{tr}
\DeclareMathOperator{\proj}{proj}
\DeclareMathOperator{\im}{im}
\DeclareMathOperator{\rank}{rank}
\DeclareMathOperator{\diag}{diag}
\DeclareMathOperator*{\argmax}{argmax}
\DeclareMathOperator{\vect}{vec}
\newcommand{\Rmnum}[1]{\expandafter\@slowromancap\Romannumeral #1@} 
\begin{document}
\title{Simpler flag optimization}
\author[Z.~Lai]{Zehua Lai}
\address{Computational and Applied Mathematics Initiative,
University of Chicago, Chicago, IL 60637-1514.}
\email{laizehua@uchicago.edu, lekheng@uchicago.edu}
\author[L.-H.~Lim]{Lek-Heng~Lim}
\author[K.~Ye]{Ke~Ye}
\address{KLMM, Academy of Mathematics and Systems Science, Chinese Academy of Sciences,
Beijing 100190, China}
\email{keyk@amss.ac.cn}
\begin{abstract}
We study the geometry of flag manifolds under different embeddings into a product of Grassmannians. We show that differential geometric objects and operations — tangent
vector, metric, normal vector, exponential map, geodesic, parallel transport, gradient, Hessian,
etc — have closed-form analytic expressions that are computable with standard numerical linear
algebra. Furthermore, we are able to derive a coordinate descent method in the flag manifold that performs well compared to other gradient descent methods.
\end{abstract}
\subjclass[2010]{14M15, 90C30, 90C53, 49Q12, 65F25, 62H12}
\keywords{}
\maketitle
\tableofcontents
\addtocontents{toc}{\protect\setcounter{tocdepth}{1}}
\section{Introduction}\label{sec:intro}
Let $d \le n$ be positive integers and let $(n_1, \dots, n_d)$ be a sequence integers such that $0 < n_1 < \cdots < n_d < n$. We denote by $\Flag(n_1,\dots,n_d;\mathbb{R}^n)$ the set of all flags in $\mathbb{R}^n$ of type $(n_1,\dots, n_d)$:
\[
\Flag(n_1,\dots,n_d;n) \coloneqq \left\lbrace \left\lbrace \mathbb{V}_k \right\rbrace_{k=1}^d:\mathbb{V}_k \subsetneq \mathbb{V}_{k+1} \subsetneq \mathbb{R}^n, \dim \mathbb{V}_k = n_k,k = 1,\dots, d-1
\right\rbrace.
\] 
\section{Preliminaries}
\subsection{differential geometry of Grassmann manifolds}
Let $k \le n$ be positive integer. We denote by  $\Gr(k,n)$ the Grassmann manifold of $k$ dimensional subspaces of $\mathbb{R}^n$. According to \cite{LLY20}, $\Gr(k,n)$ can be characterized as a submanifold of $\O(n) \cap \S_n$, i.e., 
\begin{align}
\Gr(k,n) &\simeq \left\lbrace 
Q\in \O(n) \cap \S_n: \tr (Q) = 2k - n
\right\rbrace \label{eq:modelGr1} \\
&= \left\lbrace 
V \I_{k,n-k} V^\tp: V\in \O(n)
\right\rbrace. \label{eq:modelGr2}
\end{align}
Here $\S_n$ is the space of $n\times n$ symmetric matrices and $\I_{k,n-k}$ is the diagonal matrix $\begin{bmatrix}
\I_k & 0 \\
0 & -\I_{n-k}
\end{bmatrix}$.

\begin{proposition}[Tangent space I]\label{prop:tangent}
Let  $Q  \in \Gr(k,n)$ with eigendecomposition $Q = V I_{k,n-k} V^\tp$.
The tangent space of $\Gr(k,n)$ at $Q$ is given by
\begin{align}
\T_Q \Gr(k,n) &= \left\lbrace X\in \mathbb{R}^{n\times n}: X^\tp = X,\; X Q +QX = 0,\; \tr(X) = 0 \right\rbrace \label{eq:tan1}
 \\
& = \left\lbrace
V \begin{bmatrix}
0 & B \\
B^\tp & 0
\end{bmatrix}V^\tp  \in \mathbb{R}^{n \times n} : B \in \mathbb{R}^{k\times (n-k)}
\right\rbrace \label{eq:tan2} \\
& = \left\lbrace 
QV \begin{bmatrix}
0 & B \\
-B^\tp & 0 
\end{bmatrix} V^\tp  \in \mathbb{R}^{n \times n}: B\in \mathbb{R}^{k \times (n-k)} 
\right\rbrace. \label{eq:tan3}
\end{align}
\end{proposition}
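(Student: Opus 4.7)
The plan is to prove the three descriptions coincide in order: first derive \eqref{eq:tan1} from the defining equations of $\Gr(k,n)$ inside $\O(n)\cap\S_n$, then diagonalize by $V$ to produce \eqref{eq:tan2}, and finally convert \eqref{eq:tan2} into \eqref{eq:tan3} by a direct multiplication.

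For \eqref{eq:tan1}, I would begin with model \eqref{eq:modelGr1}, which realizes $\Gr(k,n)$ as the common zero set of $Q^\tp - Q = 0$, $Q^2 - \I_n = 0$ (equivalent to $Q \in \O(n)\cap\S_n$), and $\tr(Q) = 2k-n$. Differentiating a smooth curve $Q(t)$ in $\Gr(k,n)$ with $Q(0) = Q$ and $X = \dot{Q}(0)$ yields the three conditions $X^\tp = X$, $XQ + QX = 0$, and $\tr(X) = 0$, so $\T_Q \Gr(k,n)$ is contained in the right-hand side of \eqref{eq:tan1}. Equality will follow from matching dimensions, which I recover from step two.

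For \eqref{eq:tan2}, I substitute $X = VYV^\tp$ into \eqref{eq:tan1}. Because $V \in \O(n)$ and $Q = V\I_{k,n-k}V^\tp$, the three conditions transform into $Y^\tp = Y$, $Y\I_{k,n-k} + \I_{k,n-k}Y = 0$, and $\tr(Y)=0$. Writing $Y$ in $(k,n{-}k)$-block form, the anticommutation relation forces both diagonal blocks to vanish while leaving the off-diagonal blocks unconstrained; symmetry then pins the lower-left block to $B^\tp$ where $B$ is the upper-right block, and the trace condition is automatic. This produces exactly the parametrization $Y = \begin{bmatrix} 0 & B \\ B^\tp & 0 \end{bmatrix}$ with $B \in \mathbb{R}^{k\times(n-k)}$ free, establishing \eqref{eq:tan2} and simultaneously showing the dimension is $k(n-k) = \dim \Gr(k,n)$, which closes the containment from step one.

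Finally, \eqref{eq:tan2} and \eqref{eq:tan3} match by a one-line computation: using $QV = V\I_{k,n-k}$, left-multiplication by $\I_{k,n-k}$ negates the bottom block, so
\[
QV\begin{bmatrix} 0 & B \\ -B^\tp & 0 \end{bmatrix}V^\tp = V\I_{k,n-k}\begin{bmatrix} 0 & B \\ -B^\tp & 0 \end{bmatrix}V^\tp = V\begin{bmatrix} 0 & B \\ B^\tp & 0 \end{bmatrix}V^\tp.
\]
The only step requiring real care is turning the containment into equality in step one; since the block decomposition in step two handles the dimension count cleanly, I do not expect any serious obstacle beyond bookkeeping.
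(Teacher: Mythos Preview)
Your argument is correct and is the standard derivation of this result. Note, however, that in this paper Proposition~\ref{prop:tangent} is stated as a preliminary fact without proof, imported from \cite{LLY20}; there is no proof here to compare against. Your route---differentiate the defining constraints $Q^\tp=Q$, $Q^2=\I_n$, $\tr(Q)=2k-n$ to obtain the inclusion, then use the block decomposition under conjugation by $V$ to parametrize the solution space and match dimensions---is exactly how the cited reference establishes it, and your observation that the trace condition becomes automatic once the diagonal blocks vanish is correct.
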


\begin{proposition}[Riemannian metric]\label{prop:metric}
Let $Q\in \Gr(k,n)$ with $Q = V I_{k,n-k} V^\tp$ and
\[
X = 
V \begin{bmatrix}
0 & B \\
B^\tp & 0
\end{bmatrix}V^\tp
,\quad Y = 
V \begin{bmatrix}
0 & C \\
C^\tp & 0
\end{bmatrix}V^\tp
 \in \T_Q \Gr(k,n).
\]
Then
\begin{equation}\label{eq:metric}
\langle X, Y \rangle_Q \coloneqq \tr(XY) =2\tr(B^\tp C) 
\end{equation}
defines a Riemannian metric.
The corresponding Riemannian norm is
\begin{equation}\label{eq:norm}
\lVert X \rVert_Q \coloneqq \sqrt{\langle X, X \rangle}_Q= \lVert X \rVert_\F =\sqrt{2} \lVert B \rVert_\F.
\end{equation}
\end{proposition}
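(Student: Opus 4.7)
The plan is to reduce everything to a direct block-matrix computation, and then observe that the intrinsic definition $\langle X,Y\rangle_Q = \tr(XY)$ is nothing but the restriction to $\T_Q\Gr(k,n)$ of the (constant) Frobenius inner product on the ambient space $\S_n$.

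First I would prove the identity $\tr(XY) = 2\tr(B^\tp C)$ by direct multiplication. Using $V^\tp V = \I_n$ and the cyclic property of the trace,
\[
XY = V\begin{bmatrix} 0 & B \\ B^\tp & 0\end{bmatrix}\begin{bmatrix} 0 & C \\ C^\tp & 0\end{bmatrix}V^\tp = V\begin{bmatrix} BC^\tp & 0 \\ 0 & B^\tp C\end{bmatrix}V^\tp,
\]
so $\tr(XY) = \tr(BC^\tp) + \tr(B^\tp C) = 2\tr(B^\tp C)$. In particular the expression $2\tr(B^\tp C)$ is well-defined even though the block coordinates $B,C$ depend on the chosen $V$, because it equals the intrinsic quantity $\tr(XY)$.

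Next I would verify that $\langle\cdot,\cdot\rangle_Q$ is a Riemannian metric. Bilinearity and symmetry are immediate from the cyclic property of the trace together with the fact that $X,Y$ are symmetric (so $\tr(XY)=\tr(YX)=\tr(X^\tp Y)$). For positive-definiteness, taking $Y=X$ gives $\tr(X^2) = \tr(X^\tp X) = \lVert X\rVert_\F^2 \ge 0$, which vanishes only when $X=0$; alternatively, from the block form $\tr(X^2) = 2\lVert B\rVert_\F^2$. Smoothness in $Q$ is automatic since the bilinear form $(X,Y)\mapsto \tr(XY)$ is a fixed inner product on the ambient vector space $\mathbb{R}^{n\times n}$ (indeed, on $\S_n$ it agrees with the Frobenius inner product), and we are simply restricting it to the smoothly varying subspace $\T_Q\Gr(k,n)$.

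Finally, the norm statement follows by specializing $Y=X$ (so $C=B$) in the formula just established: $\lVert X\rVert_Q^2 = \tr(X^2) = \lVert X\rVert_\F^2 = 2\lVert B\rVert_\F^2$, giving the two claimed equalities after taking square roots. There is no real obstacle here; the only point requiring slight care is the independence of $2\tr(B^\tp C)$ from the representative $V$, which is handled automatically by the identification with the manifestly intrinsic quantity $\tr(XY)$.
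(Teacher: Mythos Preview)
Your proof is correct. Note that the paper itself does not give a proof of this proposition: it is stated in the preliminaries as a known fact recalled from \cite{LLY20}, so there is no argument to compare against. Your block-matrix computation and the observation that $\tr(XY)$ is the restriction of the ambient Frobenius inner product are exactly the standard way to establish this, and nothing is missing.
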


\begin{theorem}[Geodesics I]\label{thm:geo}
Let $Q\in \Gr(k,n)$ and $X \in \T_Q \Gr(k,n)$ with
\begin{equation}\label{eq:geotan}
Q = V I_{k,n-k} V^\tp, \qquad  X = V\begin{bmatrix}
0 & B \\
B^\tp & 0
\end{bmatrix} V^\tp.
\end{equation} 
The geodesic $\gamma$ emanating from $Q$ in the direction $X$ is given by
\begin{equation}\label{eq:geo}
\gamma(t) = V \exp \left( \frac{t}{2} \begin{bmatrix}
0 & -B \\
B^\tp & 0
\end{bmatrix} \right) I_{k,n-k} \exp \left( \frac{t}{2}\begin{bmatrix}
0 & B \\
-B^\tp & 0
\end{bmatrix} \right) V^\tp.
\end{equation}
The differential equation for $\gamma$ is
\begin{equation}\label{eq:geode}
\gamma(t)^\tp \ddot{\gamma}(t) -  \ddot{\gamma}(t)^\tp \gamma(t) = 0,\qquad \gamma(0) = Q,\qquad \dot{\gamma}(0) = X.
\end{equation}
\end{theorem}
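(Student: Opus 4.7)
The plan is to verify three facts in sequence: first, that the curve $\gamma$ in \eqref{eq:geo} lies in $\Gr(k,n)$ with the stated initial position and velocity; second, that its ambient acceleration $\ddot\gamma(t)$ lies in the Frobenius-orthogonal complement of $T_{\gamma(t)}\Gr(k,n)$, which, since the metric \eqref{eq:metric} is induced from $\S_n$, is equivalent to $\gamma$ being a geodesic; and third, that this normality is exactly the differential equation \eqref{eq:geode}.

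For the first point, set $A \coloneqq \begin{bmatrix} 0 & -B \\ B^\tp & 0 \end{bmatrix}$. Since $A$ is skew-symmetric, $\exp(tA/2)$ is orthogonal and $\exp(-tA/2) = \exp(tA/2)^\tp$, which matches the second exponential factor in \eqref{eq:geo}. Writing $W(t) \coloneqq V\exp(tA/2)$, we have $W(t) \in \O(n)$ and $\gamma(t) = W(t)\,I_{k,n-k}\,W(t)^\tp$, so $\gamma(t) \in \Gr(k,n)$ by \eqref{eq:modelGr2} and $\gamma(t)$ is symmetric for all $t$. Evaluating at $t=0$ gives $\gamma(0)=Q$, and differentiating once at $t=0$ and using the block computation $A\,I_{k,n-k} = \begin{bmatrix} 0 & B \\ B^\tp & 0 \end{bmatrix} = -I_{k,n-k}\,A$ yields $\dot\gamma(0)=V A\,I_{k,n-k} V^\tp = X$.

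The central algebraic fact for the remaining steps is the anticommutation $A\,I_{k,n-k} = -I_{k,n-k}\,A$; by induction on the power series this implies $\exp(sA)\,I_{k,n-k} = I_{k,n-k}\,\exp(-sA)$ for all $s$, and in particular $A^2$ commutes with $I_{k,n-k}$. Using this, \eqref{eq:geo} simplifies to $\gamma(t) = V \exp(tA)\,I_{k,n-k}\,V^\tp$, whence $\ddot\gamma(t) = V A^2 \exp(tA)\,I_{k,n-k}\,V^\tp$. A short computation — commute $A^2$ past $I_{k,n-k}$ and collapse $\exp(tA)\,I_{k,n-k}\,\exp(tA) = I_{k,n-k}$ via anticommutation together with $\exp(tA)\exp(-tA)=I_n$ — gives
\[
\gamma(t)\,\ddot\gamma(t) \;=\; V A^2 V^\tp \;=\; \ddot\gamma(t)\,\gamma(t).
\]
Since $\gamma$, and hence $\ddot\gamma$, is symmetric, this identity is precisely \eqref{eq:geode}.

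To upgrade the commutator identity to the geodesic property, I would use that the metric \eqref{eq:metric} is the restriction of the Frobenius inner product $\langle X, Y\rangle = \tr(X^\tp Y)$ on $\S_n$, so on the submanifold $\Gr(k,n) \subset \S_n$ a curve is a geodesic iff its ambient acceleration is Frobenius-orthogonal to the tangent space at each point. By \eqref{eq:tan1}, $T_{\gamma(t)}\Gr(k,n)$ consists of symmetric matrices anticommuting with $\gamma(t)$; since $\gamma(t)^2 = I_n$, the map $X \mapsto \gamma(t)\,X\,\gamma(t)$ is an orthogonal involution on $\S_n$ that splits it into this anticommuting subspace and the orthogonal complement of symmetric matrices commuting with $\gamma(t)$. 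The commutation $[\gamma(t),\ddot\gamma(t)]=0$ established above therefore places $\ddot\gamma(t)$ in the normal space, so $\gamma$ is the claimed geodesic. The only step with any real technical content is the algebraic simplification of $\gamma\ddot\gamma$ and $\ddot\gamma\gamma$, and it is routine once the identity $A\,I_{k,n-k}=-I_{k,n-k}\,A$ is exploited.
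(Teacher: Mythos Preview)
Your argument is correct. The paper itself does not prove Theorem~\ref{thm:geo}; it is stated as a preliminary result imported from the companion work \cite{LLY20}. The closest thing in the paper to a proof is Proposition~\ref{prop:gedoesic}, which treats the general flag case and is noted afterward to recover the Grassmannian formula when $d=1$. That argument proceeds differently: it parametrizes an arbitrary curve via $\dot V(t)=V(t)\Lambda(t)$, expands $\ddot c(t)$ into three pieces $T_1+T_2-2T_3$, computes the tangential projection of each piece block by block (Lemmas~\ref{lemma:projectionT2}--\ref{lemma:projectionT3}), sets the projection to zero to obtain an ODE for $\Lambda$, and then integrates. Your approach instead verifies the closed-form curve directly, using the anticommutation $A\,I_{k,n-k}=-I_{k,n-k}A$ to collapse $\gamma(t)$ to $V\exp(tA)I_{k,n-k}V^\tp$ and then the orthogonal involution $X\mapsto\gamma X\gamma$ on $\S_n$ to identify the normal space as the commutant of $\gamma(t)$. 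Your route is shorter and more transparent for a verification once the formula is known; the paper's route is geared toward deriving the formula and generalizes uniformly to the flag setting.
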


\begin{proposition}[Parallel transport]\label{prop:pt}
Let $Q\in \Gr(k,n)$ and $X,Y \in \T_Q \Gr(k,n)$ with
\[
Q = V I_{k,n-k} V^\tp, \qquad
X = V\begin{bmatrix}
0 & B \\
B^\tp & 0
\end{bmatrix}V^\tp,
\qquad
Y = V\begin{bmatrix}
0 & C \\
C^\tp & 0
\end{bmatrix}V^\tp,
\]
where $V \in \O(n)$ and $B, C\in \mathbb{R}^{k \times (n-k)}$. 
Let $\gamma$ be a geodesic curve emanating from $Q$ in the direction $X$. Then the parallel transport of $Y$ along $\gamma$ is 
\begin{equation}\label{eq:pt}
Y (t) =V 
\exp \left( \frac{t}{2}\begin{bmatrix}
0 & -B \\
B^\tp & 0
\end{bmatrix}\right) \begin{bmatrix}
0 & C \\
C^\tp & 0
\end{bmatrix} \exp \left(\frac{t}{2} \begin{bmatrix}
0 & B \\
-B^\tp & 0
\end{bmatrix} \right) V^\tp.
\end{equation}
\end{proposition}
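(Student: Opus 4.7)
My plan is to verify directly that $Y(t)$ given in \eqref{eq:pt} satisfies the three properties that characterize the parallel transport of $Y$ along $\gamma$ when $\Gr(k,n)$ is viewed as a submanifold of $\S_n$ with the induced Frobenius metric: (i) $Y(0) = Y$; (ii) $Y(t) \in T_{\gamma(t)}\Gr(k,n)$ for all $t$; and (iii) $\dot Y(t)$ lies in the normal space to $\Gr(k,n)$ in $\S_n$ at $\gamma(t)$. Since the Levi-Civita connection on $\Gr(k,n)$ is the projection of the ambient flat connection, (iii) is equivalent to $\nabla^{\Gr}_{\dot\gamma} Y = 0$.

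For compactness, set $K = \tfrac{1}{2}\begin{bmatrix} 0 & -B \\ B^\tp & 0 \end{bmatrix}$, $M = \begin{bmatrix} 0 & C \\ C^\tp & 0 \end{bmatrix}$, $I = I_{k,n-k}$, and $W(t) = V\,e^{tK}$. Since $K^\tp = -K$ we have $W(t) \in \O(n)$, and since $K$ and $M$ are block off-diagonal relative to the $I$-eigenspace splitting, a direct block computation yields the two anticommutation identities $KI + IK = 0$ and $MI + IM = 0$, which will carry all the weight. In this notation \eqref{eq:geo} reads $\gamma(t) = W(t)\,I\,W(t)^\tp$ and \eqref{eq:pt} reads $Y(t) = W(t)\,M\,W(t)^\tp$.

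Property (i) is immediate. For (ii), the expression $\gamma(t) = W(t) I W(t)^\tp$ realizes $\gamma(t)$ in the form \eqref{eq:modelGr2} with orthogonal frame $W(t)$, and $Y(t) = W(t) M W(t)^\tp$ then has exactly the form \eqref{eq:tan2} at $\gamma(t)$ with block $C$, so Proposition \ref{prop:tangent} immediately gives $Y(t) \in T_{\gamma(t)}\Gr(k,n)$.

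For (iii) I first identify the normal space: at any $Q \in \Gr(k,n)$ the set of symmetric matrices commuting with $Q$ is orthogonal to $T_Q\Gr(k,n)$ in $\S_n$ (a short trace computation using $Q^2 = I_n$), and a dimension count shows these two subspaces exhaust $\S_n$. Differentiation yields $\dot Y(t) = W(t)\,[K,M]\,W(t)^\tp$. The bracket $[K,M]$ is symmetric because $K$ is skew and $M$ is symmetric, and it commutes with $I$ by a three-line expansion using $\{K,I\} = \{M,I\} = 0$ twice. Conjugating by $W(t)$ then shows $\dot Y(t)$ is symmetric and commutes with $\gamma(t)$, hence lies in the normal space. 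The only non-routine piece is the commutator identity $[K,M]\,I = I\,[K,M]$, which is where essentially all of the algebraic content sits, but it is forced by the pair of anticommutations and so is not really an obstacle.
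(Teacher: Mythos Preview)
Your proof is correct. The paper itself does not prove Proposition~\ref{prop:pt}: it is quoted in the preliminaries as a known result from \cite{LLY20}. The closest thing to an in-paper proof is the general parallel-transport derivation for flag manifolds in Appendix~A (Lemmas~\ref{lem:ptT2}--\ref{lem:ptT3} and Proposition~\ref{prop:ptflag}), which for $d=1$ collapses---since $\Lambda_0=X_0=0$ and $\pi([X,\Lambda])=0$---to $\dot X(t)=0$, whence $Y(t)=V(t)\,M\,V(t)^{\tp}$ as in \eqref{eq:pt}.

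Compared with that specialization, your argument is more direct and more conceptual. You work in the flat ambient $\S_n$, identify the normal space at $Q$ as the $Q$-commuting symmetric matrices via the orthogonal splitting $\S_n=\{X:\{X,Q\}=0\}\oplus\{N:[N,Q]=0\}$, and then the single identity $[K,M]\,I=I\,[K,M]$, forced by $\{K,I\}=\{M,I\}=0$, finishes everything. The paper's route instead decomposes $\dot Y$ into three terms $T_1,T_2,T_3$ in the $\O(n)^d$-embedding and projects each block by block; this buys generality (arbitrary $d$, arbitrary curve) at the price of a substantially longer computation. For the Grassmann case your verification is both shorter and self-contained.
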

\subsection{some useful functions}
We recall the \emph{Peano--Baker series associated to a matrix function $\Phi: [a,b] \to \mathbb{R}^{n\times n}$}. To define the Peano--Baker series, we first recursively define a sequence $\{M_k(t)\}_{k=0}^{\infty}$ of matrix functions 
\begin{align*}
M_0(t) &= \I_n, \\
M_k(t) &= \I_n + \int_{a}^t \Phi(s) M_{k-1} (s) ds,\quad k\in \mathbb{N}.
\end{align*}
We have the following:
\begin{theorem}\cite[Section~3, Theorem~1]{Brockett70}\label{thm:PBseries}
The sequence $\{M_k(t)\}_{k=0}^{\infty}$ converges to a matrix function $M(t)$ uniformly on $[a,b]$, which solves the differential equation 
\[
\frac{d}{dt} X(t) = \Phi(t) X(t), \quad X(a) = \I_n. 
\]
In particular, given any column vector $u \in \mathbb{R}^n$, $M(t)u$ solves the differential equation 
\[
\frac{d}{dt} x(t) = \Phi(t ) x(t),\quad x(a) = u. 
\]
\end{theorem}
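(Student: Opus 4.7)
The plan is to establish uniform convergence of the Picard iterates $\{M_k\}$ via a standard Gronwall-type comparison with the exponential series, then justify passing to the limit under the integral sign to obtain an integral equation equivalent to the ODE.

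First I would set $C \coloneqq \sup_{s \in [a,b]} \lVert \Phi(s) \rVert$ (finite since $\Phi$ is continuous on a compact interval) and prove by induction that
\[
\lVert M_k(t) - M_{k-1}(t) \rVert \le \frac{C^k (t-a)^k}{k!}
\]
for all $t \in [a,b]$ and all $k \ge 1$. The base case $k=1$ follows by estimating $\lVert M_1(t) - I_n \rVert = \lVert \int_a^t \Phi(s)\, ds \rVert \le C(t-a)$, and the inductive step uses the recursion $M_k - M_{k-1} = \int_a^t \Phi(s) (M_{k-1}(s) - M_{k-2}(s))\, ds$ together with the inductive hypothesis.

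Next, I would write $M_N(t) = I_n + \sum_{k=1}^N (M_k(t) - M_{k-1}(t))$ and deduce from the above bound that the telescoping series converges absolutely and uniformly on $[a,b]$ by comparison with $\sum_{k \ge 1} C^k(b-a)^k/k! = e^{C(b-a)} - 1$. Call the uniform limit $M(t)$. Because $\Phi$ is bounded and continuous and $M_{k-1} \to M$ uniformly, we have $\Phi(s) M_{k-1}(s) \to \Phi(s) M(s)$ uniformly on $[a,b]$, so we may exchange limit and integral in the defining recursion to get
\[
M(t) = I_n + \int_a^t \Phi(s) M(s)\, ds.
\]
The Fundamental Theorem of Calculus, applied to this continuous integrand, then yields $\dot M(t) = \Phi(t) M(t)$ with $M(a) = I_n$. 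The final claim about $M(t)u$ is immediate: differentiating $x(t) \coloneqq M(t) u$ gives $\dot x(t) = \dot M(t) u = \Phi(t) M(t) u = \Phi(t) x(t)$, with $x(a) = I_n u = u$.

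The only subtle step is the interchange of limit and integral, which relies on having uniform convergence on a compact interval together with continuity of $\Phi$; everything else is routine bookkeeping. I would not address uniqueness of the solution here since it is not asserted in the statement, though it follows quickly from Gronwall's inequality if desired.
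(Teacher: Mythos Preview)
The paper does not supply its own proof of this theorem; it is simply quoted from Brockett's textbook \cite{Brockett70}, so there is nothing to compare against. Your argument is the standard Picard iteration proof and is correct as written for continuous $\Phi$. One small caveat: the paper's statement does not explicitly impose continuity on $\Phi$, only that it is a ``matrix function'' on $[a,b]$. If merely (Lebesgue) integrability is intended, replace the bound $C(t-a)$ by $\int_a^t \lVert \Phi(s)\rVert\,ds$ in the induction, and justify the limit--integral interchange via dominated convergence rather than uniform convergence of the integrand; the rest of your argument carries over verbatim.
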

The limit matrix function $M(t)$ in Theorem~\ref{thm:PBseries} is defined to be the Peano--Baker series associated to $\Phi(t)$.
\subsection{vectorization of a matrix}
Let $m,n$ be positive integers and let $A[a_1,\dots, a_n]$ be a matrix of size $m\times n$ where $a_1,\dots, a_n \in \mathbb{R}^m$ are column vectors of $A$. We define the \emph{vectorization} of $A$ to be the column vector 
\[
\vect(A) \coloneqq \begin{bmatrix}
a_1 \\
\vdots \\
a_n
\end{bmatrix} \in \mathbb{R}^{mn}.
\]  
We recall that using vectorizations of matrices, we can express the matrix-matrix product in terms of matrix-vector product. Namely, for $A\in \mathbb{R}^{m\times n}$ and $B\in \mathbb{R}^{n\times l}$, we have 
\begin{equation}\label{eq:vectorization}
\vect (AB) = (\I_l \otimes A) \vect(B) = (B^\tp \otimes \I_m) \vect (A).
\end{equation}
Moreover, for any positive integers $m,n$, there exists a permutation matrix $K^{(m,n)} \in \mathbb{R}^{mn \times mn}$, called \emph{the commutation matrix} such that 
\begin{equation}\label{eq:commutation}
K^{(m,n)} \vect(A) = \vect(A^\tp),\quad A\in \mathbb{R}^{m\times n}.
\end{equation}
\section{Sub-Riemannian geometry of flag manifolds with classical embeddings}
According to \cite[Proposition~3.2]{YWL19}, $\Flag(n_1,\dots, n_d;n)$ can be naturally embedded into a product of Grassmann manifolds via
\begin{align}\label{eq:embedding1}
\iota: \Flag(n_1,\dots, n_d;n) &\hookrightarrow \Gr(n_1,n) \times \Gr(n_2-n_1,n) \cdots \times  \Gr(n_d- n_{d-1},n) \nonumber \\
( \left\lbrace \mathbb{V}_k\right\rbrace_{k=1}^d ) &\mapsto (\mathbb{W}_1,\mathbb{W}_2,\dots, \mathbb{W}_d).
\end{align}
Here $\mathbb{W}_1 = \mathbb{V}_1$ and $\mathbb{W}_k$ is the orthogonal complement of $\mathbb{V}_{k-1}$ in $\mathbb{V}_k, k=2,\dots,d$. For simplicity, we denote
\begin{equation}\label{eq:convention}
\boxed{m_1 \coloneqq n_1,\quad m_{d+1}  \coloneqq n - n_d, \quad m_{k} \coloneqq n_k - n_{k-1},\quad k = 2,\dots, d}
\end{equation}
so that $\iota$ is an embedding of $\Flag(n_1,\dots, n_d;n)$ into $\prod_{k=1}^d \Gr(m_k,n) $. 
\subsection{an embedding of a flag manifold into a matrix manifold}
By \eqref{eq:modelGr1}, we may also embed each $\Gr(m_k,n)$ into $\O(n) $ and hence we can write $\mathbb{W}_k$ in \eqref{eq:embedding1} as $V_k \I_{m_k, n - m_k} V_k^\tp$ for some $V_k\in \O(n)$. We denote by $\tau$ the induced embedding of $\prod_{k=1}^d \Gr(m_k,n)$ into $ \O(n)^d $. In the following, we will explicitly characterize the image $\tau \circ \iota\left( \Flag(n_1,\dots, n_d;n) \right)$ in $\O(n)^d$.
\begin{proposition}[embedding]\label{prop:modelFlag}
The image of the embedding
\begin{equation}\label{prop:modelFlag:eq0}
\varepsilon: \Flag(n_1,\dots, n_d;n) \xhookrightarrow{\iota} \prod_{k=1}^d \Gr(m_k,n) \xhookrightarrow{\tau}  \O(n)^d 
\end{equation}
is given by
\begin{multline}\label{prop:modelFlag:eq1}
\varepsilon \left(\Flag(n_1,\dots, n_d;n) \right) = 
\lbrace
(Q_1,\dots, Q_d)\in  \O(n)^d:\tr (Q_k) = 2m_k - n, Q_k^\tp = Q_k \\ (\I_n + Q_k)(\I_n + Q_{k+1}) = 0, k=1,\dots, d 
\rbrace.
\end{multline}
In particular, we have 
\begin{equation}\label{prop:modelFlag:eq2}
\varepsilon \left(\Flag(n_1,\dots, n_d;n) \right)  =\left\lbrace \left( V \J_1 V^\tp,\dots, V  \J_d V^\tp \right):V\in O(n) \right\rbrace,
\end{equation}
where $\J_k = \diag (-\I_{m_1},\cdots, -\I_{m_{k-1}}, \I_{m_k}, -\I_{m_{k+1}}, \cdots, -\I_{m_d},-\I_{m_{d+1}} )$ is obtained by permuting diagonal blocks of $\I_{m_k,n-m_k}$.
\end{proposition}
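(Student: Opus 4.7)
The plan is to prove \eqref{prop:modelFlag:eq1} and \eqref{prop:modelFlag:eq2} via the single observation that under \eqref{eq:modelGr1} the matrix assigned to a subspace $\mathbb{W}\subseteq \mathbb{R}^n$ is the reflection $Q = 2P_{\mathbb{W}}-\I_n$, where $P_{\mathbb{W}}$ denotes the orthogonal projector onto $\mathbb{W}$; indeed $V\I_{m,n-m}V^\tp$ acts as $+\I$ on $\mathbb{W}$ and $-\I$ on $\mathbb{W}^\perp$. In particular $\I_n + Q_k = 2P_{\mathbb{W}_k}$, so the cross-condition $(\I_n + Q_k)(\I_n + Q_l) = 0$ is equivalent to $P_{\mathbb{W}_k}P_{\mathbb{W}_l}=0$, i.e., to the orthogonality $\mathbb{W}_k\perp \mathbb{W}_l$. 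I read the condition in \eqref{prop:modelFlag:eq1} as being required for every distinct pair of indices, which is the content of \eqref{prop:modelFlag:eq2}.

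For the forward inclusion in \eqref{prop:modelFlag:eq1}, I would take any flag $\{\mathbb{V}_k\}_{k=1}^d$, let $\mathbb{W}_k$ be the orthogonal summands from the definition of $\iota$, and set $Q_k = 2P_{\mathbb{W}_k}-\I_n$. Symmetry, $Q_k^2=\I_n$, and $\tr(Q_k)=2m_k-n$ are immediate from the standard properties of an orthogonal projector of rank $m_k$, while the cross-condition holds because the $\mathbb{W}_k$'s are pairwise orthogonal by construction. Conversely, given $(Q_1,\dots,Q_d)$ on the right-hand side, each $Q_k$ is a symmetric involution with $\tr(Q_k)=2m_k-n$, so it equals $2P_{\mathbb{W}_k}-\I_n$ for a unique $m_k$-dimensional subspace $\mathbb{W}_k$ (its $+1$-eigenspace). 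The cross-condition then forces $\mathbb{W}_k\perp \mathbb{W}_l$ for all $k \ne l$, so $\mathbb{V}_k \coloneqq \mathbb{W}_1\oplus\cdots\oplus \mathbb{W}_k$ has dimension $n_k$ with $\mathbb{V}_k \subsetneq \mathbb{V}_{k+1} \subsetneq \mathbb{R}^n$, and the resulting flag maps back to $(Q_1,\dots,Q_d)$ under $\varepsilon$ since its canonical orthogonal-complement summands coincide with the $\mathbb{W}_k$'s just recovered.

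For \eqref{prop:modelFlag:eq2}, given $(Q_1,\dots,Q_d)\in \varepsilon(\Flag(n_1,\dots,n_d;n))$, the subspaces $\mathbb{W}_1,\dots,\mathbb{W}_d$ together with $\mathbb{W}_{d+1}\coloneqq \mathbb{V}_d^\perp$ are mutually orthogonal of dimensions $m_1,\dots,m_{d+1}$ summing to $n$, so $\mathbb{R}^n = \bigoplus_{k=1}^{d+1}\mathbb{W}_k$ is an orthogonal direct-sum decomposition. Concatenating orthonormal bases for the $\mathbb{W}_k$'s in order as the columns of $V\in \O(n)$ simultaneously diagonalizes every $Q_j$: on the $j$-th column block $Q_j$ acts as $+\I$ and on every other block as $-\I$, giving $Q_j = V\J_j V^\tp$. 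For the reverse inclusion, any $V\in \O(n)$ splits $\mathbb{R}^n$ into column blocks $\mathbb{W}_1,\dots,\mathbb{W}_{d+1}$ that are visibly mutually orthogonal, and $V\J_j V^\tp = 2P_{\mathbb{W}_j}-\I_n$ then satisfies the conditions of \eqref{prop:modelFlag:eq1} by direct inspection.

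The only step that is not bookkeeping is the reflection identity in the first paragraph; once $(\I_n+Q_k)(\I_n+Q_l)=0 \iff \mathbb{W}_k\perp \mathbb{W}_l$ is in hand, everything else reduces to tracking ranks, traces, and orthogonal direct sums.
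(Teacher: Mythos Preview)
Your proof is correct and follows essentially the same route as the paper's, hinging on the identity $P_{\mathbb{W}_k}=\tfrac12(\I_n+Q_k)$ to convert the cross-condition into pairwise orthogonality of the $\mathbb{W}_k$'s. Two small points of comparison: your derivation of \eqref{prop:modelFlag:eq2} is a bit more direct---you build $V$ by concatenating orthonormal bases of the $\mathbb{W}_k$'s, whereas the paper argues via commutativity of the $Q_k$'s and abstract simultaneous diagonalizability followed by a permutation---and your explicit reading of the cross-condition as holding for \emph{every} distinct pair (not merely consecutive indices) is exactly what is required for the reverse inclusion in \eqref{prop:modelFlag:eq1}; with only consecutive orthogonality one cannot conclude that the $\mathbb{W}_k$'s form an orthogonal decomposition, so the paper's statement should be read the way you read it.
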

\begin{proof}
According to \eqref{eq:modelGr1}, we must have $\varepsilon (\{\mathbb{R}^n_k\}_{k=1}^d) = (Q_1,\dots, Q_d) \in \left( \O(n)\cap  \S_n \right)$ with $\rank Q_k = 2m_k - n$. Moreover, since $\mathbb{W}_k$ is perpendicular to $\mathbb{W}_{k+1}$, we must have $P_{\mathbb{W}_k} \circ P_{\mathbb{W}_{k+1}} = 0 $ where $P_{\mathbb{U}}$ is the orthogonal projection from $\mathbb{R}^n$ onto a subspace $\mathbb{U}$. Now by \cite[Proposition 2.3]{LLY20}, we have $P_{\mathbb{W}_k} = \frac{1}{2} (\I_n + Q_k)$ which proves \eqref{prop:modelFlag:eq1}. To see \eqref{prop:modelFlag:eq2}, we notice that the relation $(\I_n + Q_k)(\I_n + Q_{k+1}) = 0 $ implies that $Q_k Q_{k+1} = Q_{k+1} Q_k$ and hence there exists $V_0 \in \O(n)$ diagonalizing $Q_k$'s simultaneously, i.e., $Q_k = V_0 D_k V_0^\tp $ where $D_k$ is a diagonal matrix with $m_k$ $-1$'s and $(n-m_k)$ $1$'s along its diagonal. The restriction $(\I_n + Q_k)(\I_n + Q_{k+1}) = 0 $ forces $D_k =\sigma^\tp J_{k} \sigma$ for some permutation matrix $\sigma$ and hence $V \coloneqq \sigma V_0$ gives us the desired expression of $\varepsilon (\{\mathbb{R}^n_k\}_{k=1}^d)$ in \eqref{prop:modelFlag:eq2}.
\end{proof}
In fact, \eqref{prop:modelFlag:eq2} is a special case of the general fact \cite[page~384]{FH91} that $G/P$ is an adjoint orbit of $G$ if $P$ is a parabolic subgroup of a semi-simple Lie group $G$. In our case, we have $G = \O(n)$ and $P = \O(m_1) \times \cdots \times \O(m_{d+1})$ so that $G/P \simeq \Flag (n_1,\dots, n_d;n)$ is the adjoint orbit of $(J_1,\dots, J_d) \in O(n)^d$.

Due to Proposition~\ref{prop:modelFlag}, in the sequel we abuse the notation by also using $\Flag(n_1,\dots, n_d;n)$ to denote $\varepsilon \left(\Flag(n_1,\dots, n_d;n) \right)$. 
Accordingly, an element in $\Flag(n_1,\dots, n_d;n)$ is written as a $d$-tuple 
\[
(VJ_{1} V^\tp,\dots, VJ_{d} V^\tp) = V (J_{1},\dots, J_{d}) V^\tp
\] 
for some $V\in \O(n)$, where $m_1 = n_1$ and $m_k = n_k - n_{k-1}$ for $k = 2,\dots, d$.
\subsection{tangent space, Riemannian metric and normal space} We first consider the tangent space of $\Flag(n_1,\dots, n_d;n)$ at a point $ V (J_{1},\dots, J_{d}) V^\tp$. To do this, we take a curve $V(t)$ on $\O(n)$ such that $V(0) = V$. It is clear that $\Lambda \coloneqq V(0)^\tp \dot{V}(0)  \in \mathfrak{so}(n)$ and hence the tangent vector determined by the curve $V(t) (J_{1},\dots, J_{d}) V(t)^\tp$ is simply 
\[
\dot{V}(0)  (J_{1},\dots, J_{d}) V(0)^\tp  + V(0)  (J_{1},\dots, J_{d}) \dot{V}(0)^\tp
\]
which can be further written as 
\[
V(0)
\left(
 \Lambda (J_{1},\dots, J_{d})  -  (J_{1},\dots, J_{d})  \Lambda
 \right)
V(0)^\tp. 
\]
We partition $\Lambda$ as $\Lambda = (\Lambda(p,q))_{p,q=1}^{d+1}$ where $\Lambda(p,q)$ is a $m_p \times m_q$ matrix such that $\Lambda(q,p) =- \Lambda(p,q)^\tp$. This implies that
\begin{equation}\label{eq:basicalculation}
\Lambda J_{k}- J_{k} \Lambda  = -2 \begin{bmatrix}
0 & \cdots & 0   & \Lambda(k,1)^\tp & 0 &\cdots & 0\\
\vdots & \ddots & \vdots & \vdots & \vdots & \ddots & \vdots  \\
0 & \cdots & 0   & \Lambda(k,k-1)^\tp & 0 &\cdots & 0 \\
\Lambda(k,1) & \cdots & \Lambda(k,k-1) & 0 & \Lambda(k,k+1) & \cdots &\Lambda(k,d+1) \\
0 & \cdots & 0   & \Lambda(k,k+1)^\tp & 0 & \cdots & 0 \\ 
\vdots & \ddots & \vdots & \vdots & \vdots & \ddots & \vdots  \\
0 & \cdots & 0   & \Lambda(k,d+1)^\tp & 0 &\cdots & 0
\end{bmatrix}.
\end{equation}
We notice that there is a natural identification $\prod_{1\le j < k \le d+1} \mathbb{R}^{m_j \times m_k} \simeq \mathfrak{so}(n)$ and hence we have an injective map
\[
\psi: \prod_{1\le j < k \le d+1} \mathbb{R}^{m_j \times m_k} \simeq \mathfrak{so}(n) \hookrightarrow \prod_{j=1}^d \S_n,\quad \psi( (A_{jk})_{1\le j < k \le d+1} ) = \frac{1}{2} ( AJ  - JA),
\]
where $J =  (J_{1},\dots, J_{d})$ and $A \in \mathfrak{so}(n)$ is the skew-symmetric matrix uniquely determined by $(A_{jk})_{1\le j < k \le d+1}$. The above calculations can be summarized as the following
\begin{proposition}\label{prop:tangentFlag}
Given a point $\mathfrak{f} \coloneqq V (J_{1},\dots, J_{d}) V^\tp \in \Flag(n_1,\dots, n_d;n)$, the tangent space of   $\Flag(n_1,\dots, n_d;n)$ at $\mathfrak{f}$ is 
\[
\mathbb{T}_{\mathfrak{f}} \Flag(n_1,\dots, n_d;n) =  
V \left\lbrace
\psi( (A_{jk})_{1\le j < k \le d+1} ): A_{jk} \in \mathbb{R}^{m_j \times m_k}, 1\le j < k \le d+1
\right\rbrace V^\tp. 
\]
In other words, $\mathbb{T}_{\mathfrak{f}} \Flag(n_1,\dots, n_d;n)$ consists of vectors $V (X_1,\dots, X_d) V^\tp\in \prod_{j=1}^d S_n$ satisfying 
\begin{equation}\label{prop:tangentFlag:eq}
X_k(k,l) = -X_l(k,l), X_k (p,q) = 0, X_k(k,k) = 0, \quad 1\le k,l \le d, 1\le p,q \le d+1~\text{and}~p,q,l \ne k.
\end{equation}
Here for each $1\le s, t \le d+1$, $X_k(s,t)\in \mathbb{R}^{m_s \times m_t}$ denotes the $(s,t)$-th block of $X_k\in \S_n$ when we partition $X_k$ with respect to $n = m_1 + \cdots + m_d + m_{d+1}$.
\end{proposition}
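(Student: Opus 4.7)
The plan is to exploit the homogeneous-space parameterization $V\mapsto V(J_1,\dots,J_d)V^\tp$ of $\Flag(n_1,\dots,n_d;n)$ that was established in Proposition~\ref{prop:modelFlag}. Since this map is a smooth surjection from $\O(n)$ onto the flag manifold, every tangent vector at $\mathfrak{f}=V(J_1,\dots,J_d)V^\tp$ arises from differentiating a smooth curve $V(t)\subset\O(n)$ with $V(0)=V$. Writing $\Lambda\coloneqq V^\tp\dot V(0)\in\mathfrak{so}(n)$ and differentiating componentwise, the tangent vector is
\[
V\bigl(\Lambda J_1-J_1\Lambda,\;\dots,\;\Lambda J_d-J_d\Lambda\bigr)V^\tp,
\]
exactly the $k$-th component of the map $\psi$ (up to the factor $1/2$, which is absorbed by rescaling $\Lambda$). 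Since conversely every $\Lambda\in\mathfrak{so}(n)$ is realized by the curve $V(t)=V\exp(t\Lambda)$, the image of $\psi$ is the entire tangent space. This gives the first description of $\T_{\mathfrak{f}}\Flag(n_1,\dots,n_d;n)$.

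Next I would derive the coordinate characterization \eqref{prop:tangentFlag:eq} directly from the block formula \eqref{eq:basicalculation}. Partitioning $\Lambda=(\Lambda(p,q))_{p,q=1}^{d+1}$ with $\Lambda(q,p)=-\Lambda(p,q)^\tp$, formula \eqref{eq:basicalculation} shows that $X_k\coloneqq\tfrac12(\Lambda J_k-J_k\Lambda)$ has all blocks vanishing except in row-block $k$ and column-block $k$, and that $X_k(k,k)=0$. Reading off the nonzero blocks one obtains $X_k(k,l)=-\Lambda(k,l)$ for each $l\neq k$. The symmetry of the roles of $k$ and $l$ in the calculation immediately gives the compatibility relation $X_k(k,l)=-X_l(k,l)$ for $1\le k,l\le d$, which is precisely \eqref{prop:tangentFlag:eq}.

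To rigorously conclude equality (not merely inclusion) I would either invoke a submersion argument or a dimension count. The latter is simple and self-contained: the free parameters on the right-hand side are exactly $(A_{jk})_{1\le j<k\le d+1}$ with $A_{jk}\in\mathbb{R}^{m_j\times m_k}$, giving total dimension $\sum_{j<k}m_jm_k=\tfrac12(n^2-\sum_k m_k^2)$, which matches $\dim\Flag(n_1,\dots,n_d;n)=\dim\O(n)-\sum_k\dim\O(m_k)=\binom{n}{2}-\sum_k\binom{m_k}{2}$. Thus the injective linear map $\psi$ hits a subspace of the correct dimension, forcing equality.

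The one subtlety—the main thing to watch—is injectivity of $\psi$ on $\prod_{j<k}\mathbb{R}^{m_j\times m_k}$, equivalently, that the kernel of $\Lambda\mapsto(\Lambda J_k-J_k\Lambda)_{k=1}^d$ on $\mathfrak{so}(n)$ consists exactly of the block-diagonal skew-symmetric matrices (which correspond to $A_{jk}=0$ for all $j<k$ in our parametrization). This kernel is the Lie algebra $\mathfrak{o}(m_1)\oplus\cdots\oplus\mathfrak{o}(m_{d+1})$ of the isotropy subgroup, and its exclusion from the parametrization by strictly upper off-diagonal blocks is the reason $\psi$ is injective. Verifying this is a direct block-matrix computation from \eqref{eq:basicalculation}: the $(k,l)$-block of $\Lambda J_k-J_k\Lambda$ for $l\neq k$ is $-2\Lambda(k,l)$, so vanishing of all commutators forces $\Lambda(k,l)=0$ whenever $k\neq l$.
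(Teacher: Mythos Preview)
Your proposal is correct and follows essentially the same approach as the paper: the paper's argument, given in the paragraphs immediately preceding the proposition, likewise differentiates the orbit map $V\mapsto V(J_1,\dots,J_d)V^\tp$ along curves $V(t)\subset\O(n)$, writes the result as $V(\Lambda J_k-J_k\Lambda)_kV^\tp$ with $\Lambda\in\mathfrak{so}(n)$, and reads off the block structure from \eqref{eq:basicalculation}. Your explicit dimension count and verification that the kernel of $\Lambda\mapsto(\Lambda J_k-J_k\Lambda)_k$ is the block-diagonal subalgebra $\bigoplus_k\mathfrak{o}(m_k)$ make rigorous what the paper leaves implicit when it simply asserts that $\psi$ is injective.
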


Due to Proposition~\ref{prop:tangentFlag}, we are able to parametrize a curve on $\Flag(n_1,\dots, n_d;n)$ easily. 
\begin{corollary}[curves]\label{cor:curve}
If $c: (-\varepsilon,\varepsilon) \to \Flag(n_1,\dots, n_d;n)$ is a differentiable curve such that $c(0) =  V (J_{1},\dots, J_{d}) V^\tp$, then there exists a differentiable curve $\Lambda: (-\varepsilon,\varepsilon) \to  \mathfrak{so}(n)$ such that $\Lambda(k,k)(t) \equiv 0, k =1,\dots, d+1$ and 
\[
c(t) = V \exp(\Lambda(t)) (J_{1},\dots, J_{d}) \exp(-\Lambda(t)) V^\tp,
\] 
where $\Lambda(t) = (\Lambda(j,k))_{j,k=1}^{d+1,d+1}$ is the partition of $\Lambda(t)$ with respect to $n = m_1 + \cdots + m_{d+1}$. 
\end{corollary}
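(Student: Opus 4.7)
The plan is to realize $\Lambda(t)$ as the pull-back of $c(t)$ under a smooth local section of the principal bundle $\O(n) \to \Flag(n_1,\dots,n_d;n)$ constructed from the exponential map on a reductive complement.

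First, I would set up the Lie algebra decomposition. Let $\mathfrak{p} \subseteq \mathfrak{so}(n)$ denote the block-diagonal skew-symmetric matrices with block sizes $(m_1,\dots,m_{d+1})$; this is the Lie algebra of the stabilizer $P = \O(m_1)\times\cdots\times\O(m_{d+1})$ of $(J_1,\dots,J_d)$ under the conjugation action of $\O(n)$, as noted after Proposition~\ref{prop:modelFlag}. Let $\mathfrak{m}\subseteq\mathfrak{so}(n)$ consist of those $\Lambda$ for which $\Lambda(k,k)=0$ for every $k=1,\dots,d+1$. Then $\mathfrak{so}(n)=\mathfrak{p}\oplus\mathfrak{m}$.

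Next I would introduce the smooth map
\[
\Phi: \mathfrak{m}\to\Flag(n_1,\dots,n_d;n),\qquad \Lambda\mapsto V\exp(\Lambda)(J_1,\dots,J_d)\exp(-\Lambda)V^\tp,
\]
and check that $\Phi$ is a local diffeomorphism at $0$. Its differential there sends $\Lambda\in\mathfrak{m}$ to $V(\Lambda J_k-J_k\Lambda)_{k=1}^d V^\tp = 2V\psi(\Lambda)V^\tp$, in the notation of Proposition~\ref{prop:tangentFlag}. By that proposition, the image is exactly $\mathbb{T}_{\mathfrak{f}}\Flag(n_1,\dots,n_d;n)$. The kernel consists of those $\Lambda\in\mathfrak{so}(n)$ commuting with every $J_k$; since the simultaneous eigenspaces of $J_1,\dots,J_d$ are precisely the $m_1,\dots,m_{d+1}$ diagonal blocks (the sign patterns across $k$ are pairwise distinct), this forces $\Lambda\in\mathfrak{p}$, so $\mathfrak{m}\cap\ker d\Phi_0=\{0\}$. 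Hence $d\Phi_0$ is a linear isomorphism and the inverse function theorem gives a smooth local inverse $\Psi:U\to\mathfrak{m}$ defined on a neighborhood $U$ of $\mathfrak{f}=V(J_1,\dots,J_d)V^\tp$.

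Finally, I would set $\Lambda(t)\coloneqq \Psi(c(t))$. Continuity of $c$ gives $c(t)\in U$ for $t$ in some interval around $0$, so after shrinking $\varepsilon$ if necessary, $\Lambda$ is a differentiable curve into $\mathfrak{m}\subseteq\mathfrak{so}(n)$ with $\Lambda(k,k)(t)\equiv 0$ and $\Phi(\Lambda(t))=c(t)$, which is exactly the claimed identity.

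The main subtlety lies in the passage from a pointwise or local lift to one defined on the entire interval. The exponential map on $\mathfrak{m}$ only yields a local section of $\O(n)\to\Flag(n_1,\dots,n_d;n)$, so \emph{a priori} $\Lambda(t)$ exists only on a smaller subinterval of $(-\varepsilon,\varepsilon)$; I read the corollary as permitting this implicit shrinking. If one genuinely insists on the original $\varepsilon$, then some additional argument (covering $c([-\varepsilon,\varepsilon])$ by finitely many such charts and patching, at the price of replacing a single $\Lambda$ by a piecewise construction or by multiplying successive local exponentials) would be required; this patching is the only non-routine ingredient beyond the inverse function theorem step.
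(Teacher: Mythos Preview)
Your proof is correct. The paper does not actually supply a proof of this corollary---it is stated as an immediate consequence of Proposition~\ref{prop:tangentFlag}---and your argument via the reductive splitting $\mathfrak{so}(n)=\mathfrak{p}\oplus\mathfrak{m}$ and the inverse function theorem applied to $\Lambda\mapsto V\exp(\Lambda)(J_1,\dots,J_d)\exp(-\Lambda)V^\tp$ is exactly the standard way to make that inference precise. Your computation of $d\Phi_0$ and its kernel is the computation \eqref{eq:basicalculation} underlying Proposition~\ref{prop:tangentFlag}, so you are using precisely the ingredient the paper points to. Your caveat about shrinking $\varepsilon$ is well placed: the paper's subsequent uses of the corollary (Subsection~\ref{subsec:geodesic}, Proposition~\ref{prop:newmodeltangentFlag}, and the appendices) are all local, so the implicit restriction to a smaller interval is harmless there.
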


As a submanifold of $\prod_{k=1}^d \Gr(m_k,n)$ (or equivalently, $\prod_{k=1}^d \O(n) $), $\Flag(n_1,\dots, n_d;n)$ is equipped with an induced Riemannian metric: 
\begin{equation}\label{eq:metricFlag}
\langle V(X_1,\dots, X_d) V^\tp, V(Y_1,\dots, Y_d) V^\tp \rangle_{\mathfrak{f}} \coloneqq \sum_{k=1}^d \tr(X_k Y_k),
\end{equation}
where $\mathfrak{f} = V (J_{1},\dots, J_{d}) V^\tp$ is a point in $\Flag(n_1,\dots, n_d;n)$ and $V(X_1,\dots, X_d) V^\tp$, $V(Y_1,\dots, Y_d) V^\tp$ are tangent vectors of $\Flag(n_1,\dots, n_d;n)$ at $\mathfrak{f}$. More explicitly, we can write 
\begin{equation}\label{eq:metricFlag1}
 \langle V(X_1,\dots, X_d) V^\tp, V(Y_1,\dots, Y_d) V^\tp \rangle_{\mathfrak{f}} =2\sum_{k=1}^d \sum_{ l < k < m  }  \tr( X_k(l,k) Y_k(k,l) + X_k(m,k) Y_k(k,m)).
\end{equation}
We remark that summands in the formula \eqref{eq:metricFlag1} are not evenly counted. For example, if $d = 2$, then $ \langle V(X_1,X_2) V^\tp, V(Y_1,Y_2) V^\tp \rangle_{\mathfrak{f}} $ is
\begin{equation}\label{eq:metricFlag2}
2 (\tr (X_1(2,1)Y_1(1,2)) + \tr(X_1(3,1)Y_1(1,3)) + \tr(X_2(1,2)Y_2(2,1) ) + \tr(X_2(3,2) Y_2(2,3))),
\end{equation}
in which the coefficient of $\tr (X_1(2,1)Y_1(1,2)) $ is $4$ since $\tr(X_2(1,2)Y_2(2,1) ) = \tr (X_1(2,1)Y_1(1,2))$ while coefficients for the other two summands are both $2$.

For each $Q\in \O(n) $, we have $\mathbb{T}_Q \O(n)  = Q \mathfrak{so}(n)
$ and hence for each $(Q_1,\dots, Q_d) \in \prod_{k=1}^d \O(n)$, we obtain 
\[
\mathbb{T}_{(Q_1,\dots, Q_d) } \left( \prod_{k=1}^d \O(n) \right) = \bigoplus_{k=1}^d Q_k \mathfrak{so}(n).
\]
To calculate the normal space of $\Flag(n_1,\dots, n_d;n)$ in $\prod_{k=1}^d \O(n)$ at $\mathfrak{f} = V (J_{1},\dots, J_{d}) V^\tp$, we need to determine $Y_1,\dots, Y_d\in \mathfrak{so}(n)$ such that $Y \coloneqq ( V J_{1} V^\tp Y_1,\dots,  V J_{d} V^\tp Y_d)$ is perpendicular to $\mathbb{T}_{\mathfrak{f}} \Flag(n_1,\dots, n_d;n)$, i.e., $\langle X, Y \rangle_{\mathfrak{f},\O(n)} = 0$ for all $X \in \mathbb{T}_{\mathfrak{f}} \Flag(n_1,\dots, n_d;n)$. Here the inner product $\langle \cdot,\cdot \rangle_{\mathfrak{f}, \prod_{k=1}^ d  \O(n)}$ is the canonical Riemannian metric on $\prod_{k=1}^ d  \O(n)$ at the point $\mathfrak{f}$, which induces \eqref{eq:metricFlag}. We notice that for $X = V (X_1,\dots, X_d) V^\tp \in \mathbb{T}_{\mathfrak{f}} \Flag(n_1,\dots, n_d;n)$ 
\[
V X_k V^\tp = (V J_{k} V^\tp)  V J_{k} X_k V^\tp,\quad k =1,\dots, d,
\]
which implies that 
\begin{align*}
 \langle X,Y  \rangle_{\mathfrak{f},\prod_{k=1}^ d  \O(n)} &= \sum_{k=1}^d \tr( (V J_{k} X_k V^\tp)^\tp Y_k ) \\
 & = \sum_{k=1}^d \tr ( (V X_k J_{k} V^\tp ) Y_k ) \\
 & = \sum_{k=1}^d \tr ( (X_k J_{k}) V^\tp  Y_k V )
\end{align*}
Since $ \langle X,Y  \rangle_{\mathfrak{f},\prod_{k=1}^ d  \O(n)}  = 0$ holds for any $X \in \mathbb{T}_{\mathfrak{f}} \Flag(n_1,\dots, n_d;n)$, we can equivalently write this condition as 
\[
\sum_{k=1}^d \tr ( (X_k J_{k}) Z_k) = 0,\quad (X_1,\dots, X_d) \in \mathbb{T}_{\mathfrak{f}_0} \Flag(n_1,\dots, n_d;n),
\] 
where $\mathfrak{f}_0 = (J_{1},\dots, J_{d}) \in \Flag(n_1,\dots, n_d;n)$ and $Z_k = V^\tp Y_k V,k=1,\dots, d$. If we fix a pair $(k,l)$ such that $1\le k \le d, 1\le l \le d+1, k\ne l$ and set $X_m (p,q) = 0$ for
\[
(m,p,q) \not\in \left\lbrace  (k,k,l), (k,l,k), (l,k,l), (l,l,k) \right\rbrace,
\]
then since $X_k J_{m_k,n-m_k}$ is skew-symmetric, we have 
\footnotesize
\begin{align*}
0 =  \langle X,Y  \rangle_{\mathfrak{f},\prod_{k=1}^ d  \O(n)}  &= \tr (X_k(k,l) Z_k (l,k)) -  \tr(X_k (k,l)^\tp Z_k (k,l)) -  \tr( X_l (k,l)^\tp Z_l(k,l)) +  \tr( X_l(k,l) Z_l(l,k)) \\
&=  \tr (X_k(k,l) (Z_k (l,k)) - Z_l(l,k)) ) - \tr (X_k(k,l)^\tp (-Z_k (k,l)) + Z_l(k,l)) ) \\
& = \tr (X_k(k,l) (Z_k (l,k)) - Z_l(l,k)) ) + \tr (X_k(k,l)^\tp (Z_k (k,l)) - Z_l(k,l) ) \\
& = 2 \tr (X_k(k,l) ( Z_k(l,k) - Z_l (l,k) )).
\end{align*}\normalsize
Therefore, we may derive the following characterization of $\mathbb{N}_\mathfrak{f} \Flag(n_1,\dots, n_d;n)$:
\begin{proposition}\label{prop:normalspaceFlag}
At a point $\mathfrak{f}\coloneqq V (J_{1},\dots, J_{d}) V^\tp \in \Flag(n_1,\dots, n_d;\mathbb{R}^n)$, the normal space $\mathbb{N}_\mathfrak{f} \Flag(n_1,\dots, n_d;n)$ consists of vectors 
\[
(V J_{1} Z_1 V^\tp,\dots,  V J_{d} Z_d V^\tp)
\] 
where $Z_1,\dots,Z_d\in \mathfrak{so}(n)$ satisfy the relations 
\begin{itemize}
\item $Z_k(k,l) - Z_l (k,l)= 0$ for all $1\le k \ne l \le d$.
\item $Z_k(k,d+1) = 0,  Z_k(d+1,k) = 0$ for all $1 \le k \le d$.
\end{itemize}
 In particular, we have a decomposition 
\begin{equation}\label{prop:normalspaceFlag:eq1}
\mathbb{N}_\mathfrak{f} \Flag(n_1,\dots, n_d;n) = N_\mathfrak{f} \left( \prod_{k=1}^d \Gr(m_k,n) \right) \bigoplus N^0_\mathfrak{f}
\end{equation}
where $\mathbb{N}_\mathfrak{f} \left( \prod_{k=1}^d \Gr(m_k,n) \right) \coloneqq \prod_{k=1}^d \mathbb{N}_{V J_{m_k,n-m_k} V^\tp} \Gr(m_k,n)$ and 
\begin{multline}\label{prop:normalspaceFlag:eq2}
\mathbb{N}^0_\mathfrak{f} \coloneqq \lbrace 
(V J_{m_1,n-m_1} Z_1 V^\tp ,\dots,  V J_{m_d,n-m_d} Z_d V^\tp ): Z_k\in  \mathfrak{so}(n),Z_k(k,l)  - Z_l(k,l) = 0, \\
Z_k(k,k) = 0, Z_k(p,q) = 0, Z_k(k,d+1) = 0, Z_k(d+1,k) = 0, 1\le k,l \le d, 1\le p,q \le d+ 1, p,q\ne k
 \rbrace.
\end{multline}
\end{proposition}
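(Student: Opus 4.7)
The plan is to characterize $\mathbb{N}_\mathfrak{f}\Flag(n_1,\dots,n_d;n)$ by directly imposing orthogonality against the tangent-space description from Proposition~\ref{prop:tangentFlag}, and then to read off the decomposition in \eqref{prop:normalspaceFlag:eq1} from the block structure of the resulting constraints.

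First I would complete the reduction already begun just above the proposition: parametrize a normal candidate as $Y = (VJ_1 V^\tp Y_1,\dots,VJ_d V^\tp Y_d)$ with $Y_k \in \mathfrak{so}(n)$ and set $Z_k \coloneqq V^\tp Y_k V$. The orthogonality identity $\langle X, Y\rangle_{\mathfrak{f},\prod_k \O(n)} = 0$ is equivalent, after the algebra displayed in the excerpt, to $\sum_{k=1}^d \tr(X_k J_k Z_k) = 0$ for every tangent vector $(X_1,\dots,X_d)$ at the base point $\mathfrak{f}_0 = (J_1,\dots,J_d)$, eliminating $V$ from the problem.

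Next I would feed in a family of localized test tangent vectors indexed by ordered pairs $(k,l)$ with $1 \le k \le d$, $1 \le l \le d+1$, $l \ne k$. For each pair, pick $(X_1,\dots,X_d)$ supported only on the two block positions that Proposition~\ref{prop:tangentFlag} couples: when $l \le d$ this yokes the $(k,l)$- and $(l,k)$-blocks of $X_k$ with those of $X_l$ via $X_k(k,l) = -X_l(k,l)$, while for $l = d+1$ only $X_k$ carries data, since there is no $X_{d+1}$. Expanding $\tr(X_k J_k Z_k)$ block-wise and using that $X_k J_k$ is skew-symmetric (it is a Grassmannian tangent vector, cf.\ \eqref{eq:tan3}), only two scalar contractions survive in each case, and the manipulations already shown just before the proposition collapse them to $2\tr(X_k(k,l)(Z_k(l,k) - Z_l(l,k))) = 0$ for $l \le d$ and $2\tr(X_k(k,d+1) Z_k(d+1,k)) = 0$ for $l = d+1$. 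Since $X_k(k,l)$ ranges freely over $\mathbb{R}^{m_k \times m_l}$, these identities are equivalent to $Z_k(l,k) = Z_l(l,k)$ (equivalently $Z_k(k,l) = Z_l(k,l)$ by transposition using skew-symmetry) and $Z_k(k,d+1) = Z_k(d+1,k) = 0$, which are exactly the two bulleted conditions in the statement. Sufficiency follows because these test vectors span $\mathbb{T}_{\mathfrak{f}_0}\Flag(n_1,\dots,n_d;n)$ by Proposition~\ref{prop:tangentFlag}.

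Finally, for \eqref{prop:normalspaceFlag:eq1} I would locate $\mathbb{N}_\mathfrak{f}(\prod_k \Gr(m_k,n))$ inside the characterization just obtained. Dualizing \eqref{eq:tan2} at each Grassmannian factor shows that the product-Grassmannian normal vectors correspond to tuples $(Z_1,\dots,Z_d)$ whose row-$k$ and column-$k$ off-diagonal blocks of $Z_k$ all vanish; the complementary subspace $\mathbb{N}^0_\mathfrak{f}$ defined in \eqref{prop:normalspaceFlag:eq2} is supported precisely on those row-$k$/column-$k$ blocks and carries only the cross-factor linkage $Z_k(k,l) = Z_l(k,l)$ together with the boundary vanishing $Z_k(k,d+1) = 0$. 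Disjointness of the block supports makes the sum direct, and superposing the two constraint sets recovers the full list from the previous paragraph. The main obstacle throughout is block-index bookkeeping --- juggling skew-symmetry $Z_k(p,q) = -Z_k(q,p)^\tp$ against the tangent coupling $X_k(k,l) = -X_l(k,l)$, and in particular handling the asymmetric index range that forces the $l = d+1$ case to produce vanishing rather than matching conditions, reflecting the absence of a partner ``$Z_{d+1}$''.
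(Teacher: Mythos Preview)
Your proposal is correct and follows essentially the same approach as the paper: the derivation preceding the proposition already carries out the reduction to $Z_k = V^\tp Y_k V$, the localized $(k,l)$-block test, and the collapse to $2\tr(X_k(k,l)(Z_k(l,k)-Z_l(l,k)))=0$, and you complete the argument by treating the $l=d+1$ boundary case and reading off the direct-sum decomposition from disjoint block support, which the paper leaves implicit.
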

We recall that $\Flag(n_1,\dots, n_d;n)$ can also be embedded into $\prod_{k=1}^d \Gr(n_k,n)$ as a Riemannian submanifold. Hence we may also characterize the normal space of $\Flag(n_1,\dots, n_d;n)$ with respect to this embedding.
\begin{corollary}
The normal space of $\Flag(n_1,\dots, n_d;n)$ in $\prod_{k=1}^d \Gr(m_k,n)$ at a point $\mathfrak{f} $ is $\mathbb{N}^0_\mathfrak{f} $.
\end{corollary}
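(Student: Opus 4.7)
The plan is to derive the corollary from Proposition~\ref{prop:normalspaceFlag} via the elementary fact that for a chain of Riemannian submanifolds $M \hookrightarrow N \hookrightarrow P$ — each inheriting its metric from $P$ — the normal space of $M$ in $P$ at any point $p \in M$ splits orthogonally as
\[
\mathbb{N}_p^{P} M \;=\; \mathbb{N}_p^{N} M \;\oplus\; \mathbb{N}_p^{P} N,
\]
where the superscripts indicate the ambient manifold. This is a direct consequence of the orthogonal decomposition $\mathbb{T}_p P = \mathbb{T}_p M \oplus \mathbb{N}_p^{N} M \oplus \mathbb{N}_p^{P} N$.

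Applying this with $M = \Flag(n_1,\ldots,n_d;n)$, $N = \prod_{k=1}^d \Gr(m_k,n)$, and $P = \prod_{k=1}^d \O(n)$, the normal space of $\Flag$ inside $\prod_k \Gr(m_k,n)$ coincides with the orthogonal complement of $\mathbb{N}_\mathfrak{f}\!\bigl(\prod_k \Gr(m_k,n)\bigr)$ inside $\mathbb{N}_\mathfrak{f}\Flag(n_1,\ldots,n_d;n)$. Since Proposition~\ref{prop:normalspaceFlag} already exhibits the direct-sum decomposition $\mathbb{N}_\mathfrak{f}\Flag = \mathbb{N}_\mathfrak{f}(\prod_k \Gr(m_k,n)) \oplus \mathbb{N}^0_\mathfrak{f}$, the corollary reduces, by uniqueness of orthogonal complements, to verifying that this direct sum is orthogonal.

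To verify orthogonality, take $Y = (V\J_k Y_k V^\tp)_{k=1}^d \in \mathbb{N}_\mathfrak{f}(\prod \Gr(m_k,n))$ and $Z = (V\J_k Z_k V^\tp)_{k=1}^d \in \mathbb{N}^0_\mathfrak{f}$. Using $V^\tp V = \I_n$ and $\J_k^\tp \J_k = \I_n$, the ambient metric collapses to $\langle Y, Z\rangle_\mathfrak{f} = \sum_{k=1}^d \tr(Y_k^\tp Z_k)$. The normal space of $\Gr(m_k,n)$ inside $\O(n)$ at $V\J_k V^\tp$ — obtainable by combining \eqref{eq:tan3} with the orthogonal decomposition $\mathbb{T}_{V\J_k V^\tp}\O(n) = \mathbb{T}_{V\J_k V^\tp}\Gr(m_k,n) \oplus \mathbb{N}_{V\J_k V^\tp}\Gr(m_k,n)$ — consists of those $V\J_k Y_k V^\tp$ for which $Y_k \in \mathfrak{so}(n)$ lies in the centralizer of $\J_k$, i.e., is block-diagonal with one $m_k \times m_k$ block and the complementary $(n - m_k) \times (n - m_k)$ block. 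Relative to the partition $n = m_1 + \cdots + m_{d+1}$, this means $Y_k$ has zero entries in every $(k,l)$ and $(l,k)$ block with $l \ne k$. The constraints defining $\mathbb{N}^0_\mathfrak{f}$ force $Z_k$ to have support \emph{exactly} on those off-diagonal $(k,l)$ and $(l,k)$ blocks, so the block supports of $Y_k$ and $Z_k$ are disjoint, yielding $\tr(Y_k^\tp Z_k) = 0$ and hence $\langle Y, Z\rangle_\mathfrak{f} = 0$.

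I expect the main obstacle to be the bookkeeping in the third paragraph: one must carefully match the block structure of normal vectors to a single Grassmann factor (stated in the paper relative to $\I_{m_k,n-m_k}$) against the permuted convention $\J_k$ used for the flag, confirm that the Grassmann-normal $Y_k$ is block-diagonal with respect to the $\J_k$-partition, and verify that this support is complementary to the off-diagonal support of $Z_k$ dictated by Proposition~\ref{prop:normalspaceFlag}. Once these block patterns are correctly aligned, the trace orthogonality is immediate and the corollary follows.
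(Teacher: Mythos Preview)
Your proposal is correct and follows exactly the argument the paper leaves implicit: the corollary is stated there without proof, as an immediate consequence of the decomposition \eqref{prop:normalspaceFlag:eq1} in Proposition~\ref{prop:normalspaceFlag}. Your verification that this direct sum is orthogonal (via the disjoint block supports of the Grassmann-normal piece and $\mathbb{N}^0_\mathfrak{f}$), together with the standard splitting of normal spaces for a chain $M\hookrightarrow N\hookrightarrow P$ of Riemannian submanifolds, is precisely the intended reasoning.
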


\begin{proposition}[Projections]\label{prop:projection}
Projections from $\mathbb{T}_{\mathfrak{f}} \left(  \prod_{k=1}^d \O(n) \right)$ onto $\mathbb{T}_\mathfrak{f} \Flag(n_1,\dots, n_d;n)$ and $\mathbb{N}_\mathfrak{f} \Flag(n_1,\dots, n_d;n)$ are respectively given by 
\begin{align}\label{prop:projection:tangent}
\proj_{\mathfrak{f}}^{\mathbb{T}}: \mathbb{T}_{\mathfrak{f}} \left(  \prod_{k=1}^d \O(n) \right) &\to \mathbb{T}_\mathfrak{f} \Flag(n_1,\dots, n_d;n) \nonumber \\
V(J_{1} \Lambda_1 ,\dots, J_{d}\Lambda_d)  V^\tp &\mapsto V(X_1,\dots, X_d) V^\tp.
\end{align}
and 
\begin{align}\label{prop:projection:normal}
\proj_{\mathfrak{f}}^{\mathbb{N}}: \mathbb{T}_{\mathfrak{f}} \left(  \prod_{k=1}^d \O(n) \right) &\to \mathbb{N}_\mathfrak{f} \Flag(n_1,\dots, n_d;n) \nonumber \\
V(J_{1} \Lambda_1 ,\dots, J_{d}\Lambda_d)  V^\tp &\mapsto V( Z_1,\dots, Z_d) V^\tp
\end{align}
where for each $k = 1,\dots, d$, $X_k\in \S_n$ (resp. $Z_k\in \mathbb{R}^{n\times n}$) is partitioned as $(X_k (p,q))_{p,q =1}^{d+1}$ (resp. $(Z_k(p,q))_{p,q=1}^{d+1}$) with respect to $n = m_1 + \cdots + m_{d+1}$ and 
\[
X_k (p,q) = \begin{cases}
\frac{1}{2} (\Lambda_k(k,q) - \Lambda_{q} (k,q)),~\text{if}~p = k \ne q \le d \\ 
\Lambda_k (k,d+1),~\text{if}~p=k, q = d+1 \\
-\frac{1}{2} (\Lambda_k(p,k) - \Lambda_{p} (p,k)),~\text{if}~q = k \ne p \le d \\ 
-\Lambda_k (d+1,q),~\text{if}~q=k, p = d+1 \\
0,~\text{otherwise}.
\end{cases}
\]
\[
Z_k (p,q) = \begin{cases}
\frac{1}{2} (\Lambda_k(k,q) + \Lambda_{q} (k,q)),~\text{if}~p = k \ne q \le d \\ 
0,~\text{if}~p=k, q = d+1 \\
-\frac{1}{2} (\Lambda_k(p,k) + \Lambda_{p} (p,k)),~\text{if}~q = k \ne p \le d \\ 
0,~\text{if}~q=k, p = d+1 \\
\Lambda_k(p,q),~\text{otherwise}.
\end{cases}
\]
\end{proposition}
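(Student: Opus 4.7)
The plan is to verify directly that the proposed $X_k$ and $Z_k$ furnish an orthogonal decomposition of $V(J_1\Lambda_1,\dots,J_d\Lambda_d)V^\tp$. Since the ambient tangent space $\mathbb{T}_\mathfrak{f}(\prod_{k=1}^d\O(n))$ splits orthogonally as $\mathbb{T}_\mathfrak{f}\Flag(n_1,\dots,n_d;n)\oplus\mathbb{N}_\mathfrak{f}\Flag(n_1,\dots,n_d;n)$, uniqueness of orthogonal projection reduces the claim to three separate checks: (i) $V(X_1,\dots,X_d)V^\tp\in\mathbb{T}_\mathfrak{f}\Flag(n_1,\dots,n_d;n)$ via the block characterization in Proposition~\ref{prop:tangentFlag}; (ii) $V(Z_1,\dots,Z_d)V^\tp\in\mathbb{N}_\mathfrak{f}\Flag(n_1,\dots,n_d;n)$ via Proposition~\ref{prop:normalspaceFlag}; and (iii) the sum identity $X_k+Z_k=J_k\Lambda_k$ for every $k$.

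I would carry out all three checks block by block along the partition $n=m_1+\cdots+m_{d+1}$. Step~(iii) is the easiest: adding the piecewise formulas for $X_k$ and $Z_k$ in each of the five cases, the $\pm\tfrac12(\Lambda_k\pm\Lambda_q)$ combinations telescope to $(J_k)_{pp}\Lambda_k(p,q)=(J_k\Lambda_k)(p,q)$. For step~(i), Proposition~\ref{prop:tangentFlag} demands symmetry of $X_k$, vanishing on the non-cross-$k$ blocks (including $(k,k)$), and the cross-factor tying $X_k(k,l)=-X_l(k,l)$ for $1\le k\ne l\le d$; symmetry follows from the skew-transpose identity $\Lambda_k(q,p)=-\Lambda_k(p,q)^\tp$ together with the matched signs of Cases~1 and~3, the vanishing is the ``otherwise'' case, and the tying is extracted by evaluating Case~1 at $X_k(k,l)$ and Case~3 at $X_l(k,l)$ with outer index $l$ and comparing the two. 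For step~(ii), Proposition~\ref{prop:normalspaceFlag} characterizes the normal space as $\{V(J_1\widetilde Z_1,\dots,J_d\widetilde Z_d)V^\tp:\widetilde Z_k\in\mathfrak{so}(n),\ \widetilde Z_k(k,l)=\widetilde Z_l(k,l),\ \widetilde Z_k(k,d+1)=\widetilde Z_k(d+1,k)=0\}$; I would set $\widetilde Z_k\coloneqq J_kZ_k$ and verify the skew-symmetry and the two tying/vanishing conditions block by block, noting that the symmetric combination $\tfrac12(\Lambda_k(k,q)+\Lambda_q(k,q))$ in the formula for $Z_k$ is exactly what enforces $\widetilde Z_k(k,q)=\widetilde Z_q(k,q)$.

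The main obstacle is disciplined sign bookkeeping. The matrix $J_k$ acts as $+I_{m_k}$ on the $k$-th row-block and as $-I$ on all other row-blocks, so $J_k\Lambda_k$ and $J_kZ_k$ behave differently on the outer row $k$ than on the others. In the paired-block case $(k,l)$ with $k,l\le d$ --- the only regime where a genuine orthogonal projection must be computed rather than a block simply being retained or deleted --- one has to track two different $J$'s and two different outer indices simultaneously. The half-sum and half-difference expressions arise there as the orthogonal splitting of the ambient pair $(\Lambda_k(k,l),\Lambda_l(k,l))$ into the anti-symmetric subspace $\{(u,-u)\}$ (tangent, matching the flag tying $X_k(k,l)=-X_l(k,l)$) and the symmetric subspace $\{(u,u)\}$ (normal, matching the condition $\widetilde Z_k(k,l)=\widetilde Z_l(k,l)$). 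Keeping these combinations consistent under the $J$-translations between the ``$\Lambda$-coordinates'' and the ``$X$-coordinates'' is where essentially all of the routine but error-prone computation lives.
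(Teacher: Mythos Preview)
Your approach is correct and is precisely the argument the paper has in mind: the paper gives no formal proof of this proposition, only the worked $d=2$ example immediately afterward, and that example is nothing but an instance of your three checks (i)--(iii) against the block characterizations of Propositions~\ref{prop:tangentFlag} and~\ref{prop:normalspaceFlag}. One caution on bookkeeping: in the statement the symbol $\Lambda_k(p,q)$ in the piecewise formulas is to be read as the $(p,q)$-block of the $k$-th component $J_k\Lambda_k$ of the input (compare the $d=2$ example, where e.g.\ $Z_1(2,3)=E=(J_1\Lambda_1)(2,3)$, not $\Lambda_1(2,3)=-E$); under that reading your sum check (iii) goes through block by block exactly as you describe.
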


Before we proceed, we work out the case for $d = 2$ to exhibit our calculations above. In this case, our flag manifold is $\Flag(n_1,n_2;n)$ and hence $m_1 = n_1, m_2 = n_2 - n_1, m _3 = n - n_2$. A point $\mathfrak{f}$ in $\Flag(n_1,n_2;n)$ is written as 
\[
V (J_{1},J_{2}) V^\tp = 
V \left(  
\begin{bmatrix}
\I_{m_1} & 0 & 0 \\
0 & -\I_{m_2} & 0 \\
0 & 0 & - \I_{m_3}
\end{bmatrix}, 
\begin{bmatrix}
-\I_{m_1} & 0 & 0 \\
0 & \I_{m_2} & 0 \\
0 & 0 & - \I_{m_3}
\end{bmatrix}
\right) V^\tp,\quad V\in \O(n).
\]
A tangent vector of $\Flag(n_1,n_2;n)$ at $\mathfrak{f}$ is of the form 
\[
V \left(  
\begin{bmatrix}
0 & A & B \\
A^\tp & 0 & 0 \\
B^\tp & 0 & 0 
\end{bmatrix}, 
\begin{bmatrix}
0 & -A & 0 \\
-A^\tp & 0 & C \\
0 & C^\tp & 0 
\end{bmatrix}
\right) V^\tp,\quad A\in \mathbb{R}^{m_1 \times m_2}, B\in \mathbb{R}^{m_1 \times m_3}, C\in \mathbb{R}^{m_2 \times m_3}.
\]
The normal space of $\Flag(n_1,n_2;n)$ as a submanifold of $\O(n) \times \O(n)$ at $\mathfrak{f}$ consists of vectors 
\[
V \left(  
\begin{bmatrix}
X & Y & 0 \\
Y^\tp & Z & W \\
0 & -W^\tp & U 
\end{bmatrix}, 
\begin{bmatrix}
R & Y & S \\
Y^\tp & T & 0\\
-S^\tp & 0 & K 
\end{bmatrix}
\right) V^\tp, 
\]
where $X,R\in \mathfrak{so}(m_1)$, $Z,T\in \mathfrak{so}(m_2)$, $U,K\in \mathfrak{so}(m_3)$, $Y\in \mathbb{R}^{m_1 \times m_2},W\in \mathbb{R}^{m_2 \times m_3},S\in \mathbb{R}^{m_1 \times m_3}$. 

A tangent vector $\xi$ of $\O(n) \times \O(n)$ at $\mathfrak{f}$ can be written as
\[
\xi \coloneqq V \left(  
\begin{bmatrix}
A & B & C \\
B^\tp & D & E \\
C^\tp & -E^\tp & F 
\end{bmatrix}, 
\begin{bmatrix}
X & Y & Z \\
Y^\tp & W & S\\
-Z^\tp & S^\tp & T 
\end{bmatrix}
\right) V^\tp,
\]
where $A,X\in \mathfrak{so}(m_1),D,W\in \mathfrak{so}(m_2), F,T\in \mathfrak{so}(m_3), B,Y\in \mathbb{R}^{m_1 \times m_2}, C,Z\in \mathbb{R}^{m_1 \times m_3}, E,S\in \mathbb{R}^{m_2 \times m_3}$. The projection of $\xi$ onto $T_\mathfrak{f} \Flag(n_1,n_2;n)$ is 
\[
\proj^{\mathbb{T}}_{\mathfrak{f}} (\xi) = V \left(  
\begin{bmatrix}
0 & \frac{B-Y}{2} & C \\
\frac{B^\tp-Y^\tp}{2} & 0 & 0 \\
C^\tp & 0 & 0 
\end{bmatrix}, 
\begin{bmatrix}
0 & -\frac{B-Y}{2} & 0 \\
-\frac{B^\tp - Y^\tp}{2} & 0 & S\\
0 & S^\tp & 0 
\end{bmatrix}
\right) V^\tp
\]
and its projection onto $N_{\mathfrak{f}} \Flag(n_1,n_2;n)$ is 
\[
\proj^{\mathbb{N}}_{\mathfrak{f}} (\xi) = V \left(  
\begin{bmatrix}
A & \frac{B + Y}{2} & 0 \\
\frac{B^\tp + Y^\tp}{2} & D & E \\
0 & -E^\tp & F 
\end{bmatrix}, 
\begin{bmatrix}
X & \frac{B + Y}{2} & Z \\
\frac{B^\tp + Y^\tp}{2} & W & 0\\
-Z^\tp & 0 & T 
\end{bmatrix}
\right) V^\tp
\]
The normal space $\mathbb{N}_f^0$ of $\Flag(n_1,n_2;n)$ as a submanifold of $\Gr(m_1,n) \times \Gr(m_2,n)$ at $\mathfrak{f}$ consists of vectors 
\[
V \left(  
\begin{bmatrix}
0 & Y & 0 \\
Y^\tp & 0 & 0 \\
0 & 0 & 0 
\end{bmatrix}, 
\begin{bmatrix}
0 & Y & 0 \\
Y^\tp & 0 & 0\\
0 & 0 & 0
\end{bmatrix}
\right) V^\tp, \quad Y\in \mathbb{R}^{m_1 \times m_2}.
\]
We also recall that the tangent space $\mathbb{T}_{\mathfrak{f}} (\Gr(m_1,n) \times \Gr(m_2,n))$ consists of vectors 
\[
V \left(  
\begin{bmatrix}
0 & A & B \\
A^\tp & 0 & 0 \\
B^\tp & 0 & 0 
\end{bmatrix}, 
\begin{bmatrix}
0 & D & 0 \\
D^\tp & 0 & C \\
0 & C^\tp & 0 
\end{bmatrix}
\right) V^\tp,\quad V\in \O(n), A,D\in \mathbb{R}^{m_1 \times m_2}, B\in \mathbb{R}^{m_1 \times m_3}, C\in \mathbb{R}^{m_2 \times m_3}.
\]
The following identities can be directly verified by the above computations. 
\begin{align*}
\mathbb{T}_{\mathfrak{f}} \left( \O(n) \times \O(n) \right)  &=\mathbb{T}_{\mathfrak{f}} \Flag(n_1,n_2;n) \bigoplus \mathbb{N}_{\mathfrak{f}}  \Flag(n_1,n_2;n),  \\
\mathbb{T}_{\mathfrak{f}} (\Gr(m_1,n) \times \Gr(m_2,n)) &=\mathbb{T}_{\mathfrak{f}} \Flag(n_1,n_2;n)  \bigoplus \mathbb{N}_{\mathfrak{f}}^0.
\end{align*}
\subsection{geodesics}\label{subsec:geodesic}
Recall that we may parametrize a curve $c(t)$ on $\Flag(n_1,\dots, n_d;n)$ as 
\[
c(t) = V(t) (J_{1},\dots, J_{d})  V^\tp (t),
\] 
where $V(t)$ is a curve in $\O(n)$. By differentiating the equation $V(t)^\tp V(t) = \I_n$, we obtain 
\[
\dot{V}(t)^\tp V(t) + V(t)^\tp \dot{V}(t) = 0,
\]
from which we may write $\dot{V}(t)$ as 
\[
\dot{V}(t) =V(t) \Lambda(t),
\]
for some $\Lambda(t)\in \mathfrak{so}(n)$. According to Proposition~\ref{prop:tangentFlag}, we may further partition $\Lambda(t)$ as 
\[
\Lambda(t) = (\Lambda_{jk})_{j,k=1}^{d+1,d+1}
\]
with respect to $n = m_1 + \cdots + m_{d+1}$ and $\Lambda_{kk}(t) \equiv 0, k =1,\dots, d+1$. Hence the second derivative of $c(t)$ is 
\[
\ddot{c}(t) =  V(t) \left( \Delta_1(t), \dots, \Delta_d(t) \right) V(t)^\tp
\]
where 
\begin{equation}\label{eq:2ndderivative}
\Delta_k (t) = (\dot{\Lambda}(t) J_k - J_k \dot{\Lambda(t)}) + ({\Lambda}^2(t) J_k + J_k {\Lambda}^2(t)) + \left( - 2 {\Lambda}(t) J_k {\Lambda}(t) \right),\quad k =1,\dots, d.
\end{equation}
We may rewrite $\ddot{c}(t)$ as 
\[
\ddot{c}(t) = T_1(t) + T_2(t) -2 T_3(t)
\]
where $T_j(t)$ is the $j$-summand of $V(t) \left( \Delta_1(t), \dots, \Delta_d(t) \right) V(t)^\tp$ with respect to the decomposition of $\Delta_k(t)$ given in \eqref{eq:2ndderivative}. More precisely, 
\begin{align}
T_1(t) &= V(t) (\dot{\Lambda}(t) J_1 - J_1 \dot{\Lambda}(t),\dots, \dot{\Lambda}(t) J_d - J_d \dot{\Lambda}(t)) V(t)^\tp, \\
T_2(t) &= V(t) ({\Lambda}^2(t) J_1 + J_1 {\Lambda}^2(t),\dots, {\Lambda}^2(t) J_d + J_d {\Lambda}^2(t)) V(t)^\tp,\\
T_3(t) &= V(t) ( {\Lambda}(t) J_1 {\Lambda}(t),\dots,  {\Lambda}(t) J_d {\Lambda}(t)) V(t)^\tp.
\end{align}

We recall that the geodesic equation on $\Flag(n_1,\dots, n_d;n)$ is given by 
\[
\proj_{c(t)}^{\mathbb{T}} (\ddot{c}(t)) = 0.
\] 
Therefore, to determine the geodesic equation explicitly, we need to compute the projections of $T_1(t),T_2(t),T_3(t)$ to $T_{c(t)} \Flag(n_1,\dots, n_d;n)$ respectively. From Proposition~\ref{prop:tangentFlag}, $T_1(t)$ already lies in the tangent space $T_{c(t)} \Flag(n_1,\dots, n_d;n)$. Hence it is sufficient to determine the projections of $T_2(t)$ and $T_3(t)$. 

\begin{lemma}\label{lemma:projectionT2}
Let $c(t), T_2(t)$ be as above. The projection of $\proj_{c(t)}^{\mathbb{T}} (T_2(t))$ is zero.
\end{lemma}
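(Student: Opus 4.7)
The plan is to show that $T_2(t)$ is orthogonal to $\mathbb{T}_{c(t)}\Flag(n_1,\dots,n_d;n)$ with respect to the ambient inner product $\langle(A_1,\dots,A_d),(B_1,\dots,B_d)\rangle = \sum_k \tr(A_k B_k)$, which is the inner product that induces the Riemannian metric on $\Flag$ via restriction. Since orthogonality to every tangent vector is equivalent to the tangential projection vanishing, this will give the claim without having to invoke the piecewise formulas of Proposition~\ref{prop:projection}.

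First, I would simply record that in the moving frame $V(t)$, the $k$-th component of $T_2(t)$ is $\Lambda^2(t) J_k + J_k \Lambda^2(t)$, and this matrix is symmetric because $J_k$ is symmetric and $\Lambda(t)\in\mathfrak{so}(n)$ forces $\Lambda^2(t)$ to be symmetric. Next, for an arbitrary tangent vector $V(t)(X_1,\dots,X_d)V(t)^\tp \in \mathbb{T}_{c(t)}\Flag(n_1,\dots,n_d;n)$, Proposition~\ref{prop:tangentFlag} tells us each $X_k$ is symmetric with the block structure forcing $X_k$ to be a tangent vector of $\Gr(m_k,n)$ at $V J_k V^\tp$ in the frame $V$; equivalently, by \eqref{eq:tan1} pulled back through $V$, we have the key anti-commutation relation
\[
J_k X_k + X_k J_k = 0, \qquad k=1,\dots,d.
\]

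The computation is then a single line using cyclicity of trace:
\begin{align*}
\langle T_2(t),\, V(X_1,\dots,X_d)V^\tp\rangle
&= \sum_{k=1}^d \tr\!\bigl((\Lambda^2 J_k + J_k \Lambda^2)X_k\bigr) \\
&= \sum_{k=1}^d \tr\!\bigl(\Lambda^2(J_k X_k + X_k J_k)\bigr) = 0.
\end{align*}
Since this vanishes for every admissible $(X_1,\dots,X_d)$, $T_2(t)$ lies in the ambient normal space to $\Flag(n_1,\dots,n_d;n)$, so $\proj^{\mathbb{T}}_{c(t)}(T_2(t)) = 0$. There is no serious obstacle: the only nontrivial input is the anti-commutation relation $J_k X_k + X_k J_k = 0$ for the tangent vectors of each Grassmann factor, which is already packaged into Proposition~\ref{prop:tangent}\,\eqref{eq:tan1}; everything else is formal manipulation of the trace.
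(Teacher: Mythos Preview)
Your argument is correct and is genuinely different from the paper's proof. The paper proceeds by an explicit block computation: it partitions $\Lambda^2(t)$ as $(\Gamma_{p,q})$ with respect to $n=m_1+\cdots+m_{d+1}$, computes that the $(p,q)$-block of $\Lambda^2 J_k + J_k\Lambda^2$ equals $(-2)(1-\delta_{pk}-\delta_{qk})\Gamma_{p,q}$, observes that this vanishes whenever exactly one of $p,q$ equals $k$, and then feeds this into the blockwise projection formula of Proposition~\ref{prop:projection}. Your route bypasses both the block bookkeeping and Proposition~\ref{prop:projection} by using the Grassmann tangent characterization $J_kX_k+X_kJ_k=0$ from \eqref{eq:tan1} together with trace cyclicity; this is shorter and more conceptual. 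What the paper's approach buys is explicit structural information about $T_2$ (all of its ``cross'' blocks vanish), and methodological consistency with the companion Lemma~\ref{lemma:projectionT3}, where the projection is genuinely nonzero and an explicit block computation cannot be avoided.
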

\begin{proof}
We first compute ${\Lambda}^2(t) J_k + J_k {\Lambda}^2(t)$ for each $k =1,\dots, d$. To do this, we partition ${\Lambda}^2(t)$ (resp. $J_k$) as $(\Gamma_{p,q}(t))$ (resp. $(J_k(p,q))$) with respect to the partition $n = m_1 + \cdots + m_{d+1}$ and we recall that 
\[
J_k (p,q) = \begin{cases}
(2\delta_{pk} - 1 )\I_{m_p},\quad~\text{if}~q = p, \\
0,\quad~\text{otherwise}.
\end{cases}
\]
Here $\delta_{pk}$ is the Kronecker delta function. Since ${\Lambda}(t)$ is skew-symmetric, ${\Lambda}^2(t)$ is symmetric. We have $\Gamma_{q,p} = \Gamma_{p,q}^\tp$. Now the $(p,q)$-th block of ${\Lambda}^2(t) J_k$ is  
\[
\sum_{l=1}^{m+1} \Gamma_{p,l} J_k (l,q) = \Gamma_{p,q} J_k(q,q) = (2\delta_{qk} - 1) \Gamma_{p,q}
\]
and the $(p,q)$-th block of $J_k {\Lambda}^2(t) = ({\Lambda}^2(t) J_k)^\tp$ is $(2\delta_{pk} - 1) \Gamma_{p,q}$. This implies that the $(p,q)$-th block of ${\Lambda}^2(t) J_k + J_k {\Lambda}^2(t)$ is 
\[
(2\delta_{qk} - 1) \Gamma_{p,q} +  (2\delta_{pk} - 1) \Gamma_{p,q} = (-2) (1 - \delta_{pk} - \delta_{qk}) \Gamma_{p,q}.
\]
In particular, if either $q \ne p = k$ or $p\ne q = k$, we obtain that the $(p,q)$-th block of ${\Lambda}^2(t) J_k + J_k {\Lambda}^2(t)$ is zero and this implies that $\proj_{c(t)}^{\mathbb{T}} (T_2(t)) = 0$.
\end{proof}

\begin{lemma}\label{lemma:projectionT3}
Let $c(t),T_3(t)$ be as before. The projection $\proj_{c(t)}^{\mathbb{T}} (T_3(t))$ is 
\[
V(t) (X_1,\dots, X_d) V(t)^\tp
\]
where for each $1\le k \le d$, $X_k$ is a symmetric matrix whose $(p,q)$-th block vanishes for any $(p,q)$ except $(k,d+1)$ and $(d+1,k)$. Moreover if we partition $\Lambda(t)$ as $\Lambda(t) = \begin{bmatrix}
\Lambda_0(t) & \Lambda_1(t) \\
-\Lambda_1(t)^\tp & 0
\end{bmatrix} $ where $\Lambda_0(t) \in \mathfrak{so}(n-m_{d+1})$ and $\Lambda_1(t) \in \mathbb{R}^{(n-m_{d+1}) \times m_1}$ we have 
\[
\begin{bmatrix}
X_1(1,d+1) \\
\vdots \\
X_d(d,d+1)
\end{bmatrix} = -{\Lambda}_0(t) {\Lambda}_1(t).
\]
\end{lemma}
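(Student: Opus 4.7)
The plan is to verify directly that the tuple in the statement equals the orthogonal projection of $T_3(t)$ onto $\mathbb{T}_{c(t)}\Flag(n_1,\dots,n_d;n)$ in the ambient Frobenius metric on $(\S_n)^d$. Two things have to be checked: first, that the candidate actually lies in the tangent space, and second, that $T_3(t)$ minus the candidate is orthogonal to every tangent vector. Since conjugation by $V(t)\in\O(n)$ is an isometry that preserves the block characterization of $\mathbb{T}_{c(t)}\Flag$, I would work at the base point $\mathfrak{f}_0 = (J_1,\ldots,J_d)$ and drop the outer $V(t)$ throughout.

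The main block computation is brief. Using the diagonal form of $J_k$ and partitioning $\Lambda = (\Lambda(p,q))_{p,q=1}^{d+1}$ compatibly with $n = m_1+\cdots+m_{d+1}$,
\[
(\Lambda J_k \Lambda)(p,q) \;=\; \sum_{s=1}^{d+1}(2\delta_{sk}-1)\Lambda(p,s)\Lambda(s,q) \;=\; 2\Lambda(p,k)\Lambda(k,q) - \Gamma_{p,q},
\]
where $\Gamma_{p,q}$ denotes the $(p,q)$-block of $\Lambda^2$, as in the proof of Lemma~\ref{lemma:projectionT2}. Whenever $p=k$ or $q=k$, the cross term vanishes because $\Lambda(k,k)=0$, so this block collapses to $-\Gamma_{p,q}$. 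Because $\Lambda(d+1,d+1)=0$ as well, the $(k,d+1)$ block equals $-\Gamma_{k,d+1} = -\sum_{s=1}^d\Lambda(k,s)\Lambda(s,d+1) = -(\Lambda_0\Lambda_1)_k$, which is the formula in the statement. Take the candidate $(X_1,\ldots,X_d)$ with $X_k(k,d+1) = -(\Lambda_0\Lambda_1)_k$, $X_k(d+1,k) = X_k(k,d+1)^\tp$, and every other block zero. Proposition~\ref{prop:tangentFlag} is satisfied: each $X_k$ is symmetric, supported in row/column $k$ with $X_k(k,k)=0$, and the constraint $X_k(k,l) = -X_l(k,l)$ for $k\ne l$, $k,l\le d$, holds trivially since both sides vanish.

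For orthogonality, test against an arbitrary tangent vector $(X'_1,\ldots,X'_d)$ and expand $\sum_k\tr\bigl((\Lambda J_k\Lambda - X_k)X'_k\bigr)$ in blocks. Since $X'_k$ is supported only in row/column $k$ with a zero $(k,k)$ block, only the $(k,q)$ and $(q,k)$ blocks of $\Lambda J_k\Lambda$ for $q\ne k$ can contribute, and by the computation above they are all of the form $-\Gamma_{k,q}$ and $-\Gamma_{q,k}$. The $q=d+1$ contribution is matched exactly by the $X_k$ term, so it cancels. For $q=l$ with $l\le d$ and $l\ne k$, the $k$-th and $l$-th components must be combined: the $(k,l)$ block of $\Lambda J_k\Lambda$ and the $(k,l)$ block of $\Lambda J_l\Lambda$ are both equal to $-\Gamma_{k,l}$ (since $\Lambda(k,k)=\Lambda(l,l)=0$ kills the cross term in each), so their combined inner-product contribution is a nonzero multiple of $\langle \Gamma_{k,l},\, X'_k(k,l) + X'_l(k,l)\rangle_F$, and this vanishes by the tangent-space constraint $X'_k(k,l) = -X'_l(k,l)$. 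The main obstacle here is the bookkeeping: one must carefully pair the $(k,l)$ blocks coming from two distinct components $\Lambda J_k\Lambda$ and $\Lambda J_l\Lambda$ with the linked variables $X'_k(k,l)$ and $X'_l(k,l)$, verify both that these blocks take the same value and that the constraint is opposite-sign, and then conclude that cancellation occurs precisely on the $(k,l)$ blocks with $k,l\le d$, leaving only the $(k,d+1)$ contribution claimed.
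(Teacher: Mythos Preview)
Your proof is correct and follows essentially the same route as the paper. Both arguments compute the $(k,l)$-block of $\Lambda J_k\Lambda$ and of $\Lambda J_l\Lambda$ and observe they coincide (each equals $-\Gamma_{k,l}$ since $\Lambda(k,k)=\Lambda(l,l)=0$); the paper then applies the projection formula of Proposition~\ref{prop:projection} to conclude the projected $(k,l)$-block is $\tfrac12$ of their difference, hence zero, whereas you verify orthogonality directly by pairing the equal blocks against the tangent constraint $X'_k(k,l)+X'_l(k,l)=0$ --- this is just the relevant case of Proposition~\ref{prop:projection} unpacked inline.
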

\begin{proof}
It is sufficient to compute $X_k \coloneqq {\Lambda}(t) J_k {\Lambda(t)}$ for each $k = 1,\dots, d$. We again partition $\Lambda(t)$ as $(\Lambda(p,q)(t))_{p,q=1}^d$ with respect to $n = m_1 + \cdots + m_{d+1}$. The $(p,q)$-th block of ${\Lambda}(t) J_k {\Lambda(t)}$ is 
\begin{align}
\sum_{l,s=1}^{d+1}{\Lambda}(p,l)(t) J_k (l,s) {\Lambda}(s,q)(t) &= \sum_{l=1}^{d+1} {\Lambda}(p,l)(t) J_k (l,l) {\Lambda}(l,q)(t) \nonumber \\
&=\sum_{l=1}^{d+1} (2\delta_{kl} - 1) {\Lambda}(p,l)(t) {\Lambda}(l,q)(t). \label{lemma:projectionT3:eq1}
\end{align}
In particular, for $1\le q\ne k \le d$, the $(k,q)$-th block of ${\Lambda}(t) J_k {\Lambda(t)}$ is 
\[
\sum_{l=1}^{d+1} (2\delta_{kl} - 1) {\Lambda}(k,l)(t) {\Lambda}(l,q)(t),
\]
while the $(k,q)$-th block of ${\Lambda}(t) J_q {\Lambda(t)}$ is 
\[
\sum_{l=1}^{d+1} (2\delta_{ql} - 1) {\Lambda}(k,l)(t) {\Lambda}(l,q)(t).
\]
Using Proposition~\ref{prop:projection}, we may conclude that the $(k,q)$-th block of $X_k$ is zero if $1 \le k, q \le d$.

If we take $q = d +1$ and $p = k$ in \eqref{lemma:projectionT3:eq1}, then the $(k,d+1)$-th block of $X_k$ is 
\[
X_k(k,d+1) =- \sum_{1\le l \ne k \le d}  {\Lambda}(k,l)(t) {\Lambda}(l,d+1)(t).
\]
We observe that $X_k(k,d+1)$ is the $k$-th block of the product
\[
-\begin{bmatrix}
0 & {\Lambda(1,2)}(t) & \dots & {\Lambda}
(1,d-1)(t) & {\Lambda}
(1,d)(t) \\
{\Lambda}(2,1)(t) & 0 & \dots & {\Lambda}(2,d-1)(t) & {\Lambda}(2,d)(t)  \\
\vdots &\vdots & \ddots & \vdots & \vdots \\
{\Lambda}(d-1,1)(t) & {\Lambda}(d-1,2)(t)  & \dots &  0 & {\Lambda}(d-1,d)(t) \\ 
{\Lambda}(d,1)(t) & {\Lambda}(d,2)(t)  & \dots & {\Lambda}(d,d-1)(t) &0
\end{bmatrix}
\begin{bmatrix}
{\Lambda}(1,d+1)(t) \\
{\Lambda}(2,d+1)(t) \\
\vdots \\
{\Lambda}(d-1,d+1)(t) \\
{\Lambda}(d,d+1)(t) \\
\end{bmatrix},
\]
which can be written in a compact form $-{\Lambda}_0(t) {\Lambda}_1(t)$.
\end{proof}

By assembling Lemmas~\ref{lemma:projectionT2} and \ref{lemma:projectionT3}, we can easily derive the geodesic equation on a flag manifold, from which we can even obtain an explicit formula for the geodesic curve. In fact, we have the following:
\begin{proposition}[geodesics]\label{prop:gedoesic}
Let $c(t)$ be a curve on $\Flag(n_1,\dots, n_d;n)$. We parametrize $c(t)$ as 
\[
c(t) = V(t)  (J_{1},\dots, J_{d})  V(t)^\tp,
\] 
where $V(t)$ is a curve in $\O(n)$. We have the following: 
\begin{enumerate}
\item There exists a unique $\Lambda(t) \in \mathfrak{so}(n)$ such that $\dot{V}(t) = V(t) \Lambda(t)$. \label{prop:gedoesic:item1}
\item If we partition $\Lambda(t)$ as $\Lambda(t) = (\Lambda(p,q)(t))_{p,q=1}^{d+1,d+1}\in \mathfrak{so}(n)$ with respect to $n = m_1 + \cdots + m_{d+1}$, then $\Lambda(p,p)(t) \equiv 0, p =1,\dots, d+1$.\label{prop:gedoesic:item2}
\item $c(t)$ is a geodesic curve if and only if 
\begin{equation}\label{prop:gedoesic:eq1}
\dot{\Lambda}_0(t) = 0,\quad \dot{\Lambda}_1(t) = {\Lambda}_0(t) {\Lambda}_1(t).
\end{equation}
where $\Lambda_0(t) \coloneqq (\Lambda(p,q)(t))_{p,q=1}^{d,d}$ and $\Lambda_1(t) \coloneqq (\Lambda(d+1,q)(t))_{q=1}^{d}$. \label{prop:gedoesic:item3}
\item The solution to \eqref{prop:gedoesic:eq1} is 
\[
\Lambda_0 (t) = {\Lambda}_0(0),\quad \Lambda_1(t) =\exp(  {t \Lambda}_0(0)) \Lambda_1(0) .
\]
Hence a geodesic curve $c(t)$ is
\[
c(t) = V(t)  (J_1,\dots, J_d)   V^\tp(t),
\]
where $V(t)$ is a curve in $\O(n)$ written as 
\begin{equation}\label{prop:gedoesic:eq2}
V(t) = V(0) \exp \left( t\begin{bmatrix}
2X_0 &  X_1 \\
-X_1^\tp & 0 
\end{bmatrix}  \right) \begin{bmatrix}
\exp(-tX_0) & 0 \\
0 & \I_{m_{d+1}} \
\end{bmatrix}
\end{equation}
for some $X_0 \in \mathfrak{so}(n-m_{d+1})$ satisfying $X_0(k,k) = 0, k =1,\dots, d$ and $X_1 \in \mathbb{R}^{(n-m_{d + 1}) \times m_{d+1}}$.\label{prop:gedoesic:item4}
\end{enumerate}
\end{proposition}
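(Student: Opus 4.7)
The four items will be handled in sequence.

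Item~\eqref{prop:gedoesic:item1} is standard: differentiating $V(t)^\tp V(t) = \I_n$ shows $V(t)^\tp \dot V(t) \in \mathfrak{so}(n)$, so setting $\Lambda(t) \coloneqq V(t)^\tp \dot V(t)$ and left-multiplying by $V(t)$ yields $\dot V(t) = V(t)\Lambda(t)$; uniqueness is immediate since $V(t)$ is invertible. Item~\eqref{prop:gedoesic:item2} is just a reapplication of Corollary~\ref{cor:curve}, which supplies a parametrization whose infinitesimal generator has vanishing diagonal blocks (this uses that elements of $\O(m_1)\times\cdots\times\O(m_{d+1})$ commute with each $J_k$, so they can be absorbed into a gauge choice without changing $c(t)$).

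For item~\eqref{prop:gedoesic:item3}, I will assemble the decomposition $\ddot c(t) = T_1(t) + T_2(t) - 2T_3(t)$ recorded before Lemma~\ref{lemma:projectionT2} and apply $\proj^{\mathbb{T}}_{c(t)}$. By \eqref{eq:basicalculation} the vector $T_1(t)$ already lies in $\mathbb{T}_{c(t)}\Flag(n_1,\dots,n_d;n)$, while Lemma~\ref{lemma:projectionT2} kills $T_2(t)$ and Lemma~\ref{lemma:projectionT3} concentrates the projection of $T_3(t)$ in the $(k,d+1)$ and $(d+1,k)$ blocks. Reading the equation $\proj^{\mathbb{T}}_{c(t)}(\ddot c(t)) = 0$ block by block then gives two families of conditions: for $1 \le k \ne l \le d$ only $T_1$ contributes in the $(k,l)$ block, forcing $\dot\Lambda(k,l)(t) = 0$, which combined with $\Lambda(k,k)(t) \equiv 0$ is exactly $\dot\Lambda_0(t) = 0$; for the $(k,d+1)$ block the $T_1$ term $-2\dot\Lambda(k,d+1)$ must cancel the $T_3$ contribution computed in Lemma~\ref{lemma:projectionT3}, yielding $\dot\Lambda_1(t) = \Lambda_0(t)\Lambda_1(t)$.

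For item~\eqref{prop:gedoesic:item4}, solving \eqref{prop:gedoesic:eq1} is routine: $\Lambda_0(t) \equiv \Lambda_0(0)$, so $\dot\Lambda_1 = \Lambda_0(0)\Lambda_1$ is a constant-coefficient linear matrix ODE with solution $\Lambda_1(t) = e^{t\Lambda_0(0)}\Lambda_1(0)$. To recover $V(t)$ in closed form, rather than invoking the Peano--Baker series in full generality I will test the ansatz \eqref{prop:gedoesic:eq2} directly. Setting $M = \begin{bmatrix} 2X_0 & X_1 \\ -X_1^\tp & 0\end{bmatrix}$ and $R(t) = \begin{bmatrix} e^{-tX_0} & 0 \\ 0 & \I_{m_{d+1}}\end{bmatrix}$, the product rule gives $\dot V(t) = V(0) e^{tM}\bigl[M - \begin{bmatrix} X_0 & 0 \\ 0 & 0\end{bmatrix}\bigr]R(t)$; then using $e^{tX_0}X_0 = X_0 e^{tX_0}$ one computes
\[
V(t)^\tp \dot V(t) = R(t)^\tp \begin{bmatrix} X_0 & X_1 \\ -X_1^\tp & 0\end{bmatrix} R(t) = \begin{bmatrix} X_0 & e^{tX_0}X_1 \\ -X_1^\tp e^{-tX_0} & 0\end{bmatrix},
\]
which matches the $\Lambda(t)$ solved above with $X_0 = \Lambda_0(0)$ and $X_1 = \Lambda_1(0)$; the constraint $X_0(k,k)=0$ is inherited from item~\eqref{prop:gedoesic:item2}.

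The main obstacle is the bookkeeping in item~\eqref{prop:gedoesic:item3}: keeping signs and block indices consistent between the contribution of $T_1$, the projection of $T_3$ supplied by Lemma~\ref{lemma:projectionT3}, and the tangent-space symmetry relations $X_k(k,l) = -X_l(k,l)$ of Proposition~\ref{prop:tangentFlag}. Once the projection equations are in hand, the verification in item~\eqref{prop:gedoesic:item4} is short: the block-diagonal factor $R(t)$ is precisely what is needed to cancel the top-left $X_0$ in $M$ and leave behind the ``rotating frame'' form of $\Lambda(t)$.
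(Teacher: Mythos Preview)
Your proposal is correct and follows essentially the same path as the paper. Items~\eqref{prop:gedoesic:item1}--\eqref{prop:gedoesic:item3} are exactly what the paper intends (it dismisses them as ``obvious from our earlier discussions''), and your block-by-block reading of $\proj^{\mathbb{T}}_{c(t)}(\ddot c(t))=0$ via Lemmas~\ref{lemma:projectionT2}--\ref{lemma:projectionT3} is precisely the content of those discussions. The only stylistic difference is in item~\eqref{prop:gedoesic:item4}: the paper \emph{derives} \eqref{prop:gedoesic:eq2} by introducing the substitution $W(t)=V(t)\begin{bmatrix} e^{tX_0} & 0 \\ 0 & \I_{m_{d+1}}\end{bmatrix}$, which converts the time-dependent equation $\dot V=V\Lambda(t)$ into the constant-coefficient one $\dot W=W\begin{bmatrix}2X_0 & X_1\\ -X_1^\tp & 0\end{bmatrix}$, and then solves; you instead \emph{verify} the formula by plugging in and computing $V(t)^\tp\dot V(t)$. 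The underlying calculation---the conjugation identity $R(t)^\tp\begin{bmatrix} X_0 & X_1 \\ -X_1^\tp & 0\end{bmatrix}R(t)=\Lambda(t)$---is the same in both, so the two arguments are really the same computation read in opposite directions.
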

\begin{proof}
\eqref{prop:gedoesic:item1}--\eqref{prop:gedoesic:item3} and the first half of \eqref{prop:gedoesic:item4} are obvious from our earlier discussions, hence it is only left to prove the second part of \eqref{prop:gedoesic:item4}. To that end, we notice that $V(t)$ must satisfy the equation 
\begin{equation}\label{prop:gedoesic:eq3}
\dot{V}(t) = V(t) \begin{bmatrix}
X_0 & \exp (t X_0)X_1 \\
 -X_1^\tp \exp(-t X_0) & 0 
\end{bmatrix}
\end{equation}
and 
\[
\begin{bmatrix}
X_0 & \exp (t X_0)X_1 \\
 -X_1^\tp \exp(-t X_0) & 0 
\end{bmatrix} =  \begin{bmatrix}
\exp(tX_0) & 0 \\
0 & \I_{m_{d+1}} \
\end{bmatrix} \begin{bmatrix}
X_0 & X_1 \\
-X_1^\tp & 0
\end{bmatrix} 
\begin{bmatrix}
\exp(-tX_0) & 0 \\
0 & \I_{m_{d+1}} \
\end{bmatrix}.
\]
If we set $W(t) = V(t) \begin{bmatrix}
\exp(tX_0) & 0 \\
0 & \I_{m_{d+1}} \
\end{bmatrix}$, then \eqref{prop:gedoesic:eq3} becomes 
\[
\dot{W}(t) = W(t) \begin{bmatrix}
2X_0 &  X_1 \\
-X_1^\tp & 0 
\end{bmatrix}
\]
whose solution is simply $W(t) = W(0) \exp \left( t \begin{bmatrix}
2X_0 &  X_1 \\
-X_1^\tp & 0 
\end{bmatrix}  \right) = V(0) \exp \left( t \begin{bmatrix}
2X_0 &  X_1 \\
-X_1^\tp & 0 
\end{bmatrix}  \right) $. Hence we obtain that 
\[
V(t) = V(0) \exp \left( t \begin{bmatrix}
2X_0 &  X_1 \\
-X_1^\tp & 0 
\end{bmatrix}  \right) \begin{bmatrix}
\exp(-tX_0) & 0 \\
0 & \I_{m_{d+1}} \
\end{bmatrix}.
\]
\end{proof}

We remark that if $d = 1$, then $X_0 = 0$ in \eqref{prop:gedoesic:eq2} and a geodesic curve on $\Gr(n_1,n)$ passing through $V J_1 V^\tp$ is 
\[
c(t) = V \exp \left( t \begin{bmatrix}
0 & X_1 \\
-  X_1^\tp & 0
\end{bmatrix} \right) \I_{n_1,n-n_1}   \left( -t \begin{bmatrix}
0 & X_1 \\
-  X_1^\tp & 0
\end{bmatrix} \right) V^\tp,
\]
which coincides with the formula derived in \cite{LLY20}.

We again work out the case $d = 2$ to illustrate the proof of Proposition~\ref{prop:gedoesic}. To this end, we write 
\[
\Lambda (t) = \begin{bmatrix}
0 & A(t) & B(t) \\
-A^\tp (t) & 0 & C(t) \\
-B^\tp (t) & -C^\tp(t) & 0 
\end{bmatrix},\quad A(t)\in \mathbb{R}^{m_1 \times m_2},B(t)\in \mathbb{R}^{m_1 \times m_3},C(t)\in \mathbb{R}^{m_2 \times m_3}
\]
and suppose that the curve 
\[
c(t) = V(t) (J_1,J_2) V(t)^\tp,\quad \dot{V}(t) = V(t) \Lambda(t), \quad V(t)\in \O(n)
\]
is a curve passing through $(J_1,J_2)$ with the direction
\[
({\Lambda}(0) J_1 - J_1 {\Lambda}(0), {\Lambda}(0)J_2- J_2 {\Lambda}(0)) =-2 \left( \begin{bmatrix}
0 & A(0) & B(0) \\
A(0)^\tp & 0 & 0 \\
B^\tp(0) & 0 & 0 
\end{bmatrix}, 
\begin{bmatrix}
0 & -A(0) & 0 \\
-A^\tp(0) & 0 & C(0) \\
0 & C^\tp(0) & 0 
\end{bmatrix} \right).
\]
We write  $\ddot{c}(t) = V(t)\left( \Delta_1(t), \Delta_2(t) \right) V(t)^\tp$ where 
\[
\Delta_k (t) = (\dot{\Lambda}(t) J_k - J_k \dot{\Lambda(t)}) + ({\Lambda}^2(t) J_k + J_k {\Lambda}^2(t)) + \left( - 2 {\Lambda}(t) J_k {\Lambda}(t) \right).
\]
It is sufficient to compute the projection of ${\Lambda}(t) J_k {\Lambda}(t)$ onto $T_{c(t)} \Flag(n_1,n_2;n)$, which is 
\[
{\Lambda}(t) J_1 {\Lambda}(t) =  
\left( \begin{bmatrix}
*  & {B}(t){C}(t)^\tp & -{A}(t){C}(t) \\ 
{C}(t){B}(t)^\tp & * & * \\
-{C}(t)^\tp {A}(t)^\tp & * & *
\end{bmatrix}, \begin{bmatrix}
*  & {B}(t){C}(t)^\tp & * \\ 
{C}(t){B}(t)^\tp & * & {A}(t)^\tp {B}(t) \\
* & {B}(t)^\tp {A}(t) & *
\end{bmatrix} 
\right),
\]
where $*$ denotes those irrelevant blocks. Eventually, we obtain 
\scriptsize{
\[
\proj^{\mathbb{T}}_{c(t)} (\dot{c}(t)) =-2 \left( \begin{bmatrix}
0  &  \dot{A}(t) & \dot{B}(t) -{A}(t){C}(t) \\ 
\dot{A}(t)^\tp & 0 & 0 \\
\dot{B}(t)^\tp- {C}(t)^\tp {A}(t)^\tp & 0 & 0
\end{bmatrix}, \begin{bmatrix}
0 & -\dot{A}(t) & 0 \\ 
-\dot{A}^\tp(t) & 0 & \dot{C}(t) + {A}(t)^\tp {B}(t) \\
0 & \dot{C}(t)^\tp + {B}(t)^\tp {A}(t) & 0
\end{bmatrix} 
\right).
\]\normalsize
Hence the geodesic equation for $\Flag(n_1,n_2;n)$ is 
\[
\dot{A}(t) =0, \quad \dot{B}(t) - {A}(t){C}(t)= 0, \quad  \dot{C}(t) + {A}(t)^\tp {B}(t)= 0,
\]
which can be rewritten in a more compact form: 
\begin{equation}\label{eq:geodesic:d=2}
\dot{A} = 0,\quad \begin{bmatrix}
\dot{B}(t) \\
\dot{C}(t)
\end{bmatrix} = \begin{bmatrix}
0 & {A}(t) \\
-{A}^\tp(t) & 0
\end{bmatrix} 
\begin{bmatrix}
{B}(t) \\
{C}(t)
\end{bmatrix}.
\end{equation}
The solution to \eqref{eq:geodesic:d=2} is 
\[
A(t) = {A}(0),\quad \begin{bmatrix}
B(t) \\
C(t)
\end{bmatrix} = \exp\left(t \begin{bmatrix}
0 & {A}(0) \\
-{A}^\tp(0) & 0
\end{bmatrix} \right)  \begin{bmatrix}
{B}(0) \\
{C}(0)
\end{bmatrix}.
\]

\section{Sub-Riemannian geometry of flag manifolds with modified embeddings} In this section, we discuss the embedded geometry of flag manifolds with respect to a modified version of the embedding \eqref{eq:embedding1}. Namely, we define 
\begin{align}\label{eq:embedding2}
\tilde{\iota}: \Flag(n_1,\dots, n_d;n) &\hookrightarrow \Gr(n_1,n) \times \Gr(n_2-n_1,n) \cdots \times  \Gr(n_d- n_{d-1},n) \times \Gr(n-n_d,n) \nonumber \\
( \left\lbrace \mathbb{V}_k\right\rbrace_{k=1}^d ) &\mapsto (\mathbb{W}_1,\mathbb{W}_2,\dots, \mathbb{W}_d,\mathbb{W}_{d+1}),
\end{align}
Here $\mathbb{W}_k$ is the orthogonal complement of $\mathbb{V}_{k-1}$ in $\mathbb{V}_k$ for $2\le k \le d$, $\mathbb{W}_1 = \mathbb{R}^n_1$ and $\mathbb{W}_{d+1}$ is the orthogonal complement of $\mathbb{V}_d$ in $\mathbb{R}^n$. We observe that 
\[
\tilde{\iota}( \left\lbrace \mathbb{V}_k\right\rbrace_{k=1}^d ) = (\iota( \left\lbrace \mathbb{V}_k\right\rbrace_{k=1}^d ), \mathbb{W}_{d+1}).
\]
In other words, $\tilde{\iota}$ is simply an extension of $\iota$ by tautologically adding the orthogonal complement of $\mathbb{V}_d$. Since $\iota$ is already an embedding, we may easily conclude that $\tilde{\iota}$ is also an embedding. Adopting the convention \eqref{eq:convention}, $\tilde{\iota}$ embeds $\Flag(n_1,\dots, n_d;n)$ into $\prod_{j=1}^{d+1} \Gr(m_j,n)$. Moreover, by Proposition~\ref{prop:modelFlag} we have the following:
\begin{proposition}[embedding]\label{prop:newmodelFlag}
The image of the embedding
\begin{equation}\label{prop:newmodelFlag:eq0}
\tilde{\varepsilon}: \Flag(n_1,\dots, n_d;n) \xhookrightarrow{\tilde{\iota}} \prod_{j=1}^{d+1} \Gr(m_j,n) \xhookrightarrow{\tilde{\tau}}  \O(n)^{d+1} 
\end{equation}
is given by
\begin{multline}\label{prop:newmodelFlag:eq1}
\tilde{\varepsilon} \left(\Flag(n_1,\dots, n_d;n) \right) = 
\lbrace
(Q_1,\dots, Q_{d+1})\in\prod_{j=1}^{d+1}  \O(n):\tr (Q_j) = 2m_j - n, Q_j^\tp = Q_j \\ (\I_n + Q_j)(\I_n + Q_{j+1}) = 0, j=1,\dots, d+1 
\rbrace.
\end{multline}
In particular, we also have 
\begin{equation}\label{prop:newmodelFlag:eq2}
\tilde{\varepsilon} \left(\Flag(n_1,\dots, n_d;n) \right)  =\left\lbrace  V (J_1,\dots, J_{d+1}) V^\tp: V\in O(n) \right\rbrace,
\end{equation}
where $\J_k = \diag (-\I_{m_1},\cdots, -\I_{m_{k-1}}, \I_{m_k}, -\I_{m_{k+1}}, \cdots, -\I_{m_{d+1}})$ is obtained by permuting diagonal blocks of $\I_{m_k,n-m_k}, k =1,\dots, d+1$ and 
\[
V (J_1,\dots, J_{d+1}) V^\tp \coloneqq \left( V \J_1 V^\tp,\dots, V  \J_{d+1} V^\tp \right).
\]
\end{proposition}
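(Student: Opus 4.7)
The plan is to mirror the proof of Proposition~\ref{prop:modelFlag}, the only new feature being the tautologically appended subspace $\mathbb{W}_{d+1} := \mathbb{V}_d^{\perp}$, of dimension $m_{d+1} = n - n_d$. For \eqref{prop:newmodelFlag:eq1}, I would first verify the forward inclusion: the analysis of $(Q_1, \ldots, Q_d)$ is identical to Proposition~\ref{prop:modelFlag}, so it only remains to treat $Q_{d+1}$. The Grassmannian model \eqref{eq:modelGr1} supplies $Q_{d+1} \in \O(n) \cap \S_n$ with $\tr(Q_{d+1}) = 2 m_{d+1} - n$, and the extra relation $(\I_n + Q_d)(\I_n + Q_{d+1}) = 0$ follows from the orthogonality $\mathbb{W}_d \perp \mathbb{W}_{d+1}$ together with $P_{\mathbb{W}_j} = \tfrac{1}{2}(\I_n + Q_j)$ from \cite[Proposition~2.3]{LLY20}.

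For the reverse inclusion in \eqref{prop:newmodelFlag:eq1}, given a tuple satisfying the stated conditions, I would apply Proposition~\ref{prop:modelFlag} to $(Q_1, \ldots, Q_d)$ to produce a flag $\{\mathbb{V}_k\}_{k=1}^d$ and then identify $\mathbb{W}_{d+1} := \im \tfrac{1}{2}(\I_n + Q_{d+1})$: the orthogonality relation with $Q_d$ forces $\mathbb{W}_{d+1} \subseteq \mathbb{V}_d^{\perp}$, and matching dimensions gives equality, so the tuple coincides with $\tilde{\varepsilon}(\{\mathbb{V}_k\})$. For \eqref{prop:newmodelFlag:eq2}, I would argue as in Proposition~\ref{prop:modelFlag}: the orthogonal decomposition $\mathbb{R}^n = \bigoplus_{j=1}^{d+1} \mathbb{W}_j$ furnished by the flag yields pairwise commutation of all $Q_j$'s, hence a common eigenbasis $V_0 \in \O(n)$ with $Q_j = V_0 D_j V_0^\tp$, each $D_j$ diagonal of signature $(m_j, n - m_j)$. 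The mutual orthogonality of the $+1$-eigenspaces combined with $\sum_{j=1}^{d+1} m_j = n$ forces the supports of the $+1$-diagonal entries of distinct $D_j$'s to partition $\{1, \ldots, n\}$; a single permutation $\sigma$ realigns them into the canonical block order, yielding $D_j = \sigma^\tp J_j \sigma$, and $V := V_0 \sigma^\tp$ produces the claimed form.

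The main subtlety, already latent in Proposition~\ref{prop:modelFlag}, will be the passage from the algebraic adjacency conditions to full pairwise orthogonality of the $\mathbb{W}_j$'s in the converse direction --- adjacent orthogonality $\mathbb{W}_j \perp \mathbb{W}_{j+1}$ is not on its own strong enough, as simple low-dimensional examples show. My strategy is to sidestep this by first reconstructing the flag $\{\mathbb{V}_k\}$ inductively as in the second paragraph and then reading off pairwise orthogonality from the inclusions $\mathbb{W}_j \subseteq \mathbb{V}_{j-1}^{\perp} \cap \mathbb{V}_j$, rather than attempting to derive commutativity of all $Q_j$'s directly from the adjacency relations.
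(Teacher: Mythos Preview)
Your plan is aligned with the paper, which gives no separate proof here: it simply prefaces the statement with ``by Proposition~\ref{prop:modelFlag} we have the following'', so rerunning that argument with the extra factor $Q_{d+1}$ is exactly what is intended.

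That said, the step ``the orthogonality relation with $Q_d$ forces $\mathbb{W}_{d+1} \subseteq \mathbb{V}_d^{\perp}$'' does not go through. The relation $(\I_n + Q_d)(\I_n + Q_{d+1}) = 0$ yields only $\mathbb{W}_{d+1} \perp \mathbb{W}_d$, whereas $\mathbb{V}_d^{\perp}$ is the orthogonal complement of $\mathbb{W}_1 \oplus \cdots \oplus \mathbb{W}_d$. Concretely, with $n=4$, $d=2$, $(m_1,m_2,m_3)=(1,1,2)$, the choice $\mathbb{W}_1=\spn(e_1)$, $\mathbb{W}_2=\spn(e_2)$, $\mathbb{W}_3=\spn(e_1,e_3)$ satisfies every condition listed in \eqref{prop:newmodelFlag:eq1}, yet $\mathbb{W}_3 \not\subseteq \mathbb{V}_2^{\perp}$ and the corresponding tuple is not in the image of $\tilde\varepsilon$. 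You diagnose this correctly in your final paragraph as the subtlety ``already latent in Proposition~\ref{prop:modelFlag}'', but your proposed inductive sidestep hits the same wall when you try to adjoin $\mathbb{W}_{d+1}$: the inductive step needs $\mathbb{W}_{j} \perp \mathbb{V}_{j-1}$, not merely $\mathbb{W}_{j} \perp \mathbb{W}_{j-1}$. In short, the reverse inclusion in \eqref{prop:newmodelFlag:eq1} --- and equally in \eqref{prop:modelFlag:eq1} --- is only correct once the consecutive-pair condition is read as the full pairwise condition $(\I_n + Q_j)(\I_n + Q_l) = 0$ for all $j\ne l$; under that reading both your argument and the paper's go through without difficulty, and your derivation of \eqref{prop:newmodelFlag:eq2} via simultaneous diagonalization is then sound.
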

Similarly to Proposition~\ref{prop:tangentFlag} and Corollary~\ref{cor:curve}, we also have: 
\begin{proposition}\label{prop:newmodeltangentFlag}
Given a point $\tilde{\mathfrak{f}} \coloneqq V (J_1,\dots, J_{d+1}) V^\tp$, the tangent space $\mathbb{T}_{\tilde{\mathfrak{f}}} \Flag(n_1,\dots, n_d;n)$ consists of vectors $V (X_1,\dots, X_{d+1}) V^\tp \in \prod_{j=1}^{d+1} S_n$ satisfying
\begin{equation}\label{prop:newmodeltangentFlag:eq}
X_k(k,l) = - X_{l}(k,l), X_k(p,q) = 0, X_k(k,k) = 0, \quad 1\le k,l,p,q \le d+1~\text{and}~p,q,l \ne k.
\end{equation} 
Here $X_k(s,t) \in \mathbb{R}^{m_s \times m_t}$ is the $(s,t)$-th block of $X_k\in S_n$ when we partition $X_k$ with respect to $n = \sum_{j=1}^{d+1} m_j$. Moreover, a curve $c(t)$ passing through $c(0) = V (J_1,\dots, J_{d+1}) V^\tp$ on $\Flag(n_1,\dots, n_d;n)$ can be locally parametrized as 
\[
c(t) = V \exp(\Lambda(t)) (J_1,\dots, J_{d+1}) \exp(-\Lambda(t)) V^\tp.
\]
For some differentiable curve $\Lambda: (-\varepsilon,\varepsilon) \to \mathfrak{so}(n)$ such that $\Lambda(k,k)(t) \equiv 0$.
\end{proposition}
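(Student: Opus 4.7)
The plan is to argue that this proposition is essentially the same computation as Proposition~\ref{prop:tangentFlag} and Corollary~\ref{cor:curve}, adapted to the longer tuple $(J_1,\dots,J_{d+1})$. The only structural change is that the $(d+1)$-st slot, which was a ``spectator'' under $\iota$, is now a genuine factor, and its block $\J_{d+1} = \diag(-\I_{m_1},\dots,-\I_{m_d},\I_{m_{d+1}})$ has the same shape as the other $\J_k$'s (a single positive block, rest negative). Consequently every statement that previously held for $1\le k\le d$ now holds for $1\le k\le d+1$ with no exceptional role for the index $d+1$.

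For the tangent space, I would take a curve $V(t)\subset \O(n)$ with $V(0)=V$, set $\Lambda = V^\tp \dot V(0)\in \mathfrak{so}(n)$, and differentiate $V(t)(J_1,\dots,J_{d+1})V(t)^\tp$ at $t=0$ to obtain
\[
V\bigl(\Lambda J_1 - J_1\Lambda,\dots,\Lambda J_{d+1}-J_{d+1}\Lambda\bigr)V^\tp.
\]
Partitioning $\Lambda = (\Lambda(p,q))_{p,q=1}^{d+1}$ with respect to $n=m_1+\cdots+m_{d+1}$, the block computation in \eqref{eq:basicalculation} applies verbatim for each $k=1,\dots,d+1$: the only nonzero blocks of $\tfrac12(\Lambda J_k-J_k\Lambda)$ sit in the $k$-th row and $k$-th column, and the $(k,l)$ block equals $-\Lambda(k,l)$ while the $(l,k)$ block equals $\Lambda(l,k)=-\Lambda(k,l)^\tp$. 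Reading off these entries yields $X_k(p,q)=0$ whenever $p,q\ne k$, $X_k(k,k)=0$, and $X_k(k,l)=-\Lambda(k,l)=-X_l(k,l)$, which is precisely \eqref{prop:newmodeltangentFlag:eq}. Surjectivity is immediate because any $(A_{jk})_{1\le j<k\le d+1}$ arises as the strictly upper triangular part of some skew-symmetric $\Lambda$.

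For the curve statement, I would invoke the orbit description \eqref{prop:newmodelFlag:eq2}: every curve on $\Flag$ near $\tilde{\mathfrak{f}}$ lifts to a curve $V(t)\in\O(n)$ with $V(0)=V$, and writing $V(t)=V\exp(\Lambda(t))$ near $t=0$ via the exponential chart of $\O(n)$ gives the claimed form. To enforce $\Lambda(k,k)(t)\equiv 0$, I would mod out by the isotropy group $\O(m_1)\times\cdots\times\O(m_{d+1})$ at $(J_1,\dots,J_{d+1})$, whose Lie algebra consists exactly of block-diagonal skew-symmetric matrices with blocks of sizes $m_1,\dots,m_{d+1}$. Such block-diagonal matrices commute with every $\J_k$, so replacing $\Lambda(t)$ by its off-block-diagonal part does not change the image $V\exp(\Lambda(t))(J_1,\dots,J_{d+1})\exp(-\Lambda(t))V^\tp$.

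The one delicate point, and the only real obstacle, is ensuring that this normalization can be made smoothly in $t$: one needs a smooth local section of the quotient $\O(n)\to \O(n)/(\O(m_1)\times\cdots\times\O(m_{d+1}))$ sending the identity coset to a matrix in $\exp(\mathfrak{m})$, where $\mathfrak{m}\subset\mathfrak{so}(n)$ is the off-block-diagonal complement to the isotropy Lie algebra. Such a section exists because $\mathfrak{so}(n)=\mathfrak{m}\oplus(\mathfrak{so}(m_1)\oplus\cdots\oplus\mathfrak{so}(m_{d+1}))$ as a direct sum, and the exponential map restricted to $\mathfrak{m}$ gives a local diffeomorphism onto a slice. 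Composing any lift of $c(t)$ with this slice produces the required smooth $\Lambda(t)$ with vanishing diagonal blocks.
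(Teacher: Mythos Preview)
Your proposal is correct and matches the paper's approach exactly: the paper provides no separate proof of this proposition, stating only that it follows ``similarly to Proposition~\ref{prop:tangentFlag} and Corollary~\ref{cor:curve}'', which is precisely the reduction you carry out in detail. Your slice argument for the curve parametrization (and the care you take with smoothness of the normalization) is in fact more thorough than anything the paper makes explicit for either embedding.
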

If $d = 2$, then a tangent vector of $\Flag(n_1, n_2;n)$ at $\tilde{f}$ can be written as 
\[
V \left(  
\begin{bmatrix}
0 & A & B \\
A^\tp & 0 & 0 \\
B^\tp & 0 & 0 
\end{bmatrix}, 
\begin{bmatrix}
0 & -A & 0 \\
-A^\tp & 0 & C \\
0 & C^\tp & 0 
\end{bmatrix}, 
\begin{bmatrix}
0 & 0 & -B \\
0 & 0 & -C \\
-B^\tp & -C^\tp & 0 
\end{bmatrix}
\right) V^\tp,
\]
where $A\in \mathbb{R}^{m_1 \times m_2}, B\in \mathbb{R}^{m_1 \times m_3}, C\in \mathbb{R}^{m_2 \times m_3}$.
\subsection{induced Riemannian metric, normal space and projections}
As a submanifold of $\prod_{j =1}^{d+1} \O(n)$, $\Flag(n_1,\dots, n_d;n)$ is equipped with a naturally induced Riemannian metric:
\begin{align}\label{eq:newmodelmetricFlag}
\langle V(X_1,\dots, X_{d+1}) V^\tp, V(Y_1,\dots, Y_{d+1}) V^\tp \rangle_{\tilde{\mathfrak{f}}} &\coloneqq \sum_{j=1}^{d+1} \tr(X_j Y_j) \\
\nonumber
&= 2\sum_{k=1}^{d+1} \sum_{ l < k < m}  \tr( X_k(l,k) Y_k(k,l) + X_k(m,k) Y_k(k,m)).
\end{align}
Unlike \eqref{eq:metricFlag} in which some summands are weighted differently, all summands in the new metric \eqref{eq:newmodelmetricFlag} are evenly weighted. For instance, if we take $d =2$ then $\langle V(X_1,X_2, X_{3}) V^\tp, V(Y_1,Y_2,Y_{3}) V^\tp \rangle_{\tilde{\mathfrak{f}}} $ is simply 
\begin{equation}\label{eq:newmodelmetricFlag1}
4 \left( \tr (X_1(2,1) Y_1(1,2)) +  \tr (X_1(3,1) Y_1(1,3)) + \tr (X_2(3,2) Y_2(2,3)) \right).
\end{equation}
The distinction between \eqref{eq:newmodelmetricFlag} and \eqref{eq:metricFlag} can be easily observed by comparing  \eqref{eq:newmodelmetricFlag1} with \eqref{eq:metricFlag2}.

We notice that the tangent space of $\prod_{j =1}^{d+1} \O(n)$ at $\tilde{\mathfrak{f}} = V (J_1,\dots, J_{d+1}) V^\tp$ is 
\[
\mathbb{T}_{\tilde{\mathfrak{f}}} \left( \prod_{j =1}^{d+1} \O(n) \right)= \bigoplus_{j=1}^{d+1} \left( V J_j V^\tp \mathfrak{so}(n) \right) = \bigoplus_{j=1}^{d+1} \left( V J_j  \mathfrak{so}(n) V^\tp \right).
\]
\begin{proposition}\label{prop:newmodelnormalspaceFlag}
At a point $\tilde{\mathfrak{f}}\coloneqq V (J_{1},\dots, J_{d+1}) V^\tp \in \Flag(n_1,\dots, n_d;\mathbb{R}^n)$, the normal space $\mathbb{N}_{\tilde{\mathfrak{f}}} \Flag(n_1,\dots, n_d;n)$ consists of vectors 
\[
V ( J_{1} Z_1,\dots,   J_{d+1} Z_{d+1} )V^\tp
\] 
where $Z_1,\dots,Z_d\in \mathfrak{so}(n)$ satisfy the relation
\[
Z_k(k,l) - Z_l (k,l)= 0,\quad \text{for all}~1\le k \ne l \le d+1.
\]
In particular, we have a decomposition 
\begin{equation}\label{prop:newmodelnormalspaceFlag:eq1}
\mathbb{N}_{\tilde{\mathfrak{f}}} \Flag(n_1,\dots, n_d;n) = N_{\tilde{\mathfrak{f}}} \left( \prod_{k=1}^{d+1} \Gr(m_k,n) \right) \bigoplus N^0_{\tilde{\mathfrak{f}}},
\end{equation}
where $\mathbb{N}_{\tilde{\mathfrak{f}}} \left( \prod_{k=1}^{d+1} \Gr(m_k,n) \right) = \bigoplus_{k=1}^{d+1} \mathbb{N}_{V J_{m_k,n-m_k} V^\tp} \Gr(m_k,n)$ and 
\begin{multline}\label{prop:newmodelnormalspaceFlag:eq2}
\mathbb{N}^0_{\tilde{\mathfrak{f}}} = \lbrace 
V( J_{m_1,n-m_1} Z_1  ,\dots,   J_{m_{d+1},n-m_{d+1}} Z_{d+1}  )V^\tp: Z_k\in  \mathfrak{so}(n),Z_k(k,l)  - Z_l(k,l) = 0, \\
Z_k(k,k) = 0, Z_k(p,q) = 0, 1\le k,l,p,q \le d+1, p,q\ne k
 \rbrace.
\end{multline}
\end{proposition}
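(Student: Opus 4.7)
The proof follows the same template as that of Proposition~\ref{prop:normalspaceFlag}, and the structural similarity is the main reason to expect a short argument. The essential change is that the new embedding $\tilde{\iota}$ places all $d+1$ indices on equal footing: Proposition~\ref{prop:newmodeltangentFlag} allows tangent vectors $X = V(X_1,\dots,X_{d+1})V^\tp$ in which the last component $X_{d+1}$ may be nonzero, whereas in the classical setting no such component exists. This is precisely what turns the asymmetric boundary conditions $Z_k(k,d+1) = Z_k(d+1,k) = 0$ from Proposition~\ref{prop:normalspaceFlag} into the uniform matching condition $Z_k(k,l) = Z_l(k,l)$ for all $1 \le k \ne l \le d+1$.

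My plan is as follows. Since $\mathbb{T}_Q \O(n) = Q\mathfrak{so}(n)$, I would write a candidate normal vector at $\tilde{\mathfrak{f}} = V(J_1,\dots,J_{d+1})V^\tp$ as $Y = V(J_1 Z_1,\dots,J_{d+1} Z_{d+1})V^\tp$ with $Z_j \in \mathfrak{so}(n)$. By left-invariance of the construction under multiplication by $V$, it suffices to test orthogonality at $\tilde{\mathfrak{f}}_0 = (J_1,\dots,J_{d+1})$, i.e., to determine when
\[
\sum_{m=1}^{d+1} \tr(X_m J_m Z_m) = 0
\]
for every $X \in \mathbb{T}_{\tilde{\mathfrak{f}}_0}\Flag(n_1,\dots,n_d;n)$. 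Using Proposition~\ref{prop:newmodeltangentFlag}, I would test against the minimal tangent vectors: fix a pair $1 \le k < l \le d+1$ and a free matrix $A \in \mathbb{R}^{m_k \times m_l}$, and let only the $(k,l)$ and $(l,k)$ blocks of $X_k$ (equal to $A$ and $A^\tp$) and of $X_l$ (equal to $-A$ and $-A^\tp$) be nonzero. Expanding the trace with $J_m(p,q) = (2\delta_{pm}-1)\delta_{pq}\I_{m_p}$ proceeds exactly as in the trace computation preceding Proposition~\ref{prop:normalspaceFlag}, and collapses to a multiple of $\tr\bigl(A\bigl(Z_k(l,k) - Z_l(l,k)\bigr)\bigr)$. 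Since this must vanish for every $A$ and every admissible pair $(k,l)$, I get $Z_k(l,k) = Z_l(l,k)$, equivalently $Z_k(k,l) = Z_l(k,l)$ by transposing and using the skew-symmetry of $Z_k, Z_l$.

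For the decomposition \eqref{prop:newmodelnormalspaceFlag:eq1}, I would identify the Grassmannian normal factor first. The normal space of $\Gr(m_k,n)$ in $\O(n)$ at $VJ_kV^\tp$ consists of $VJ_k Z_k V^\tp$ with $Z_k \in \mathfrak{so}(n)$ block-diagonal with respect to the $(m_k, n-m_k)$ splitting, which in the finer partition $n = \sum_j m_j$ translates to $Z_k(k,l) = Z_k(l,k) = 0$ for all $l \ne k$. Such $Z_k$ automatically satisfy the matching condition, so $\mathbb{N}_{\tilde{\mathfrak{f}}}\bigl(\prod_{k=1}^{d+1}\Gr(m_k,n)\bigr) \subseteq \mathbb{N}_{\tilde{\mathfrak{f}}}\Flag(n_1,\dots,n_d;n)$. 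Writing each $Z_k \in \mathfrak{so}(n)$ as its block-diagonal part plus its off-block part (consisting only of the $k$-th row and column blocks) gives an orthogonal splitting under the ambient metric \eqref{eq:newmodelmetricFlag}, and the off-block part is exactly what is parametrized in $\mathbb{N}^0_{\tilde{\mathfrak{f}}}$ by \eqref{prop:newmodelnormalspaceFlag:eq2}. This yields the claimed direct-sum decomposition.

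The main obstacle is the sign-and-transpose bookkeeping in the trace expansion of the second paragraph: one has to track carefully that the cross-term from $J_k$ and the cross-term from $J_l$ combine (rather than cancel) into the matching condition with the correct orientation. Once this is verified, the symmetry of the setup forces every other step to be a direct translation of the argument for Proposition~\ref{prop:normalspaceFlag}.
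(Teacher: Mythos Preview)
Your proposal is correct and follows exactly the route the paper itself takes: Proposition~\ref{prop:newmodelnormalspaceFlag} is stated in the paper without an explicit proof, the tacit justification being that the computation preceding Proposition~\ref{prop:normalspaceFlag} carries over verbatim once the index range is extended to $d+1$, with the asymmetric boundary conditions $Z_k(k,d+1)=0$ disappearing because the pairing now also involves the $(d+1)$-st tangent component. Your identification of this symmetry as the only substantive change, and your plan for the direct-sum decomposition via the block-diagonal/off-block splitting of each $Z_k$, match the paper's implicit argument precisely.
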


\begin{proposition}[Projections]\label{prop:newmodelprojection}
Projections from $\mathbb{T}_{\tilde{\mathfrak{f}}} \left(  \prod_{k=1}^{d+1} \O(n) \right)$ onto $\mathbb{T}_{\tilde{\mathfrak{f}}} \Flag(n_1,\dots, n_d;n)$ and $\mathbb{N}_{\tilde{\mathfrak{f}}} \Flag(n_1,\dots, n_d;n)$ are respectively given by 
\begin{align}\label{prop:newmodelprojection:tangent}
\proj_{\tilde{\mathfrak{f}}}^{\mathbb{T}}: \mathbb{T}_{\tilde{\mathfrak{f}}} \left(  \prod_{k=1}^{d+1} \O(n) \right) &\to \mathbb{T}_{\tilde{\mathfrak{f}}} \Flag(n_1,\dots, n_d;n) \nonumber \\
V(J_{1} \Lambda_1 ,\dots, J_{d+1}\Lambda_{d+1})  V^\tp &\mapsto V(X_1,\dots, X_{d+1}) V^\tp,
\end{align}
and 
\begin{align}\label{prop:newmodelprojection:normal}
\proj_{\tilde{\mathfrak{f}}}^{\mathbb{N}}: \mathbb{T}_{\tilde{\mathfrak{f}}} \left(  \prod_{k=1}^{d+1} \O(n) \right) &\to \mathbb{N}_{\tilde{\mathfrak{f}}} \Flag(n_1,\dots, n_d;n) \nonumber \\
V(J_{1} \Lambda_1 ,\dots, J_{d+1}\Lambda_{d+1})  V^\tp &\mapsto V( Z_1,\dots, Z_{d+1}) V^\tp,
\end{align}
where for each $k = 1,\dots, d$, $X_k\in \S_n$ (resp. $Z_k\in \mathbb{R}^{n\times n}$) is partitioned as $(X_k (p,q))_{p,q =1}^{d+1}$ (resp. $(Z_k(p,q))_{p,q=1}^{d+1}$) with respect to $n = m_1 + \cdots + m_{d+1}$ and 
\[
X_k (p,q) = \begin{cases}
\frac{1}{2} (\Lambda_k(k,q) - \Lambda_{q} (k,q)),~\text{if}~p = k \ne q  \\ 
-\frac{1}{2} (\Lambda_k(p,k) - \Lambda_{p} (p,k)),~\text{if}~q = k \ne p  \\ 
0,~\text{otherwise}.
\end{cases}
\]
\[
Z_k (p,q) = \begin{cases}
\frac{1}{2} (\Lambda_k(k,q) + \Lambda_{q} (k,q)),~\text{if}~p = k \ne q  \\ 
-\frac{1}{2} (\Lambda_k(p,k) + \Lambda_{p} (p,k)),~\text{if}~q = k \ne p  \\ 
\Lambda_k(p,q),~\text{otherwise}.
\end{cases}
\]
\end{proposition}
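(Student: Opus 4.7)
The plan is to verify the two displayed formulas are the orthogonal projections by direct block-by-block computation, using the characterizations of $\mathbb{T}_{\tilde{\mathfrak{f}}}\Flag(n_1,\dots,n_d;n)$ and $\mathbb{N}_{\tilde{\mathfrak{f}}}\Flag(n_1,\dots,n_d;n)$ from Propositions~\ref{prop:newmodeltangentFlag} and \ref{prop:newmodelnormalspaceFlag}. Since $\mathbb{T}_{\tilde{\mathfrak{f}}}(\prod_{k=1}^{d+1}\O(n))$ decomposes as the orthogonal direct sum of $\mathbb{T}_{\tilde{\mathfrak{f}}}\Flag$ and $\mathbb{N}_{\tilde{\mathfrak{f}}}\Flag$ (the analog of the $d=2$ identity displayed after Proposition~\ref{prop:projection}, with $\O(n)\times\O(n)$ replaced by $\O(n)^{d+1}$), it suffices to verify three things: (i) the image of $\proj^{\mathbb{T}}_{\tilde{\mathfrak{f}}}$ lies in $\mathbb{T}_{\tilde{\mathfrak{f}}}\Flag$; (ii) the image of $\proj^{\mathbb{N}}_{\tilde{\mathfrak{f}}}$ lies in $\mathbb{N}_{\tilde{\mathfrak{f}}}\Flag$; and (iii) the two images sum to the input $V(J_1\Lambda_1,\dots,J_{d+1}\Lambda_{d+1})V^\tp$.

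For (i), I unwind the piecewise definition of $X_k$ and check the three conditions in Proposition~\ref{prop:newmodeltangentFlag}. Symmetry $X_k = X_k^\tp$ follows block-by-block from skew-symmetry of $\Lambda_k$ and $\Lambda_q$: the identity $\Lambda(k,q)^\tp = -\Lambda(q,k)$ transforms the case-$1$ formula into the case-$2$ formula with transposed arguments. The vanishing conditions $X_k(p,q)=0$ for $p,q\ne k$ and $X_k(k,k)=0$ are immediate from the ``otherwise'' branch. The matching relation $X_k(k,l) = -X_l(k,l)$ is obtained by plugging $p=k,q=l$ into case~$1$ of the $X_k$ formula and $p=k,q=l$ into case~$2$ of the $X_l$ formula, and comparing the two expressions in $\Lambda_k(k,l)$ and $\Lambda_l(k,l)$. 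The analogous verification for (ii) writes the output as $V(J_1 Z'_1,\dots,J_{d+1}Z'_{d+1})V^\tp$ with $Z'_k \coloneqq J_k Z_k$; checking $Z'_k\in\mathfrak{so}(n)$ and the relation $Z'_k(k,l) = Z'_l(k,l)$ proceeds in parallel, with the sign structure now arranged so that the $\Lambda_k$ and $\Lambda_q$ contributions add (rather than subtract) to produce the symmetric-matching condition characterizing the normal space.

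For (iii), I check block-by-block that $X_k(p,q) + Z_k(p,q) = (J_k\Lambda_k)(p,q) = (2\delta_{pk}-1)\Lambda_k(p,q)$ in each of the three cases of the piecewise definitions, which is a short direct computation for each branch. The main obstacle is purely combinatorial: several indices ($k$, $l$, $p$, $q$) are in play at once and each piecewise branch must be paired correctly with its counterpart when verifying symmetry or the matching relations. Once the correct case of each piecewise formula is identified, every check reduces to elementary algebra on one or two blocks of the $\Lambda_k$'s, exactly as in the worked $d=2$ example immediately preceding the statement (and in the earlier Proposition~\ref{prop:projection}, from which the present result is the natural extension obtained by treating the $(d+1)$-st block symmetrically with the others rather than as a special case).
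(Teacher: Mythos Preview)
The paper gives no formal proof of this proposition; it states the formulas and then verifies them in the $d=2$ example that immediately follows (just as it did for the analogous Proposition~\ref{prop:projection}). Your strategy---check that the proposed $X$ lands in the tangent space described by Proposition~\ref{prop:newmodeltangentFlag}, the proposed $Z$ lands in the normal space described by Proposition~\ref{prop:newmodelnormalspaceFlag}, and that $X_k+Z_k=J_k\Lambda_k$ block by block---is exactly this implicit verification carried out in general, and is the standard way to establish a projection formula once the two orthogonal complementary subspaces have been characterized.
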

As an illustrative example, we take a tangent vector $\xi$ of $\O(n) \times \O(n) \times \O(n)$ at some point $\mathfrak{f} = V (J_1,J_2,J_3) V^\tp$, which can be written as
\[
\xi \coloneqq V \left(  
\begin{bmatrix}
A & B & C \\
B^\tp & D & E \\
C^\tp & -E^\tp & F 
\end{bmatrix}, 
\begin{bmatrix}
X & Y & Z \\
Y^\tp & W & S\\
-Z^\tp & S^\tp & T 
\end{bmatrix},
\begin{bmatrix}
L & M & N \\
-M^\tp & P & Q\\
N^\tp & Q^\tp & R 
\end{bmatrix}
\right) V^\tp,
\]
where $A,X,L\in \mathfrak{so}(m_1)$, $D,W,P\in \mathfrak{so}(m_2)$, $F,T,R\in \mathfrak{so}(m_3)$, $B,Y,M\in \mathbb{R}^{m_1 \times m_2}$, $C,Z,N\in \mathbb{R}^{m_1 \times m_3}$, $E,S,Q\in \mathbb{R}^{m_2 \times m_3}$. The projection of $\xi$ to $T_{\tilde{\mathfrak{f}}} \Flag(n_1,n_2;n)$ is 
\small
\[
\proj^{\mathbb{T}}_{\tilde{\mathfrak{f}}} (\xi) = V \left(  
\begin{bmatrix}
0 & \frac{B-Y}{2} & \frac{C-N}{2} \\
\frac{B^\tp-Y^\tp}{2} & 0 & 0 \\
\frac{C^\tp-N^\tp}{2} & 0 & 0 
\end{bmatrix}, 
\begin{bmatrix}
0 & -\frac{B-Y}{2} & 0 \\
-\frac{B^\tp - Y^\tp}{2} & 0 & \frac{S-Q}{2}\\
0 & \frac{S^\tp - Q^\tp}{2} & 0 
\end{bmatrix},
\begin{bmatrix}
0 & 0 & -\frac{C-N}{2} \\
0 & 0 & -\frac{S-Q}{2}\\
-\frac{C^\tp-N^\tp}{2} & -\frac{S^\tp - Q^\tp}{2} & 0 
\end{bmatrix}
\right) V^\tp
\]\normalsize
and its projection to $N_{\mathfrak{f}} \Flag(n_1,n_2;n)$ is 
\[
\proj^{\mathbb{N}}_{\mathfrak{f}} (\xi) = V \left(  
\begin{bmatrix}
A & \frac{B + Y}{2} & \frac{C+N}{2} \\
\frac{B^\tp + Y^\tp}{2} & D & E \\
\frac{C^\tp+N^\tp}{2} & -E^\tp & F 
\end{bmatrix}, 
\begin{bmatrix}
X & \frac{B + Y}{2} & Z \\
\frac{B^\tp + Y^\tp}{2} & W & \frac{S+Q}{2}\\
-Z^\tp & \frac{S^\tp+Q^\tp}{2} & T 
\end{bmatrix},
\begin{bmatrix}
L & M & \frac{C+N}{2} \\
-M^\tp & P & \frac{S+Q}{2}\\
\frac{C^\tp+N^\tp}{2} & \frac{S^\tp + Q^\tp}{2} & T 
\end{bmatrix}
\right) V^\tp
\]\normalsize
\subsection{geodesics}
Assume that $c(t)$ is a curve in $\Flag(n_1,\dots,n_d;n)$, then according to Proposition~\ref{prop:newmodeltangentFlag} we may parametrize $c(t)$ as 
\begin{equation}\label{eq:newmodelcurve}
c(t) = V(t)  (J_1,\dots, J_{d+1}) V(t)^\tp
\end{equation}
for some differentiable curve $V(t)$ in $\O(n)$. Moreover, we have $\dot{V}(t) = V(t) \Lambda(t)$ where $\Lambda(t)$ is a curve in $\mathfrak{so}(n)$ partitioned as $\Lambda(t) = (\Lambda(p,q))_{p,q=1}^{d+1}$ with respect to $m_1 + \cdots + m_{d+1} = n$ and and 
$\Lambda(k,k)(t) \equiv 0, k =1,\dots, d+1$. This implies that we have 
\[
\ddot{c}(t) = T_1(t) + T_2(t) -2 T_3(t),
\]
where $T_j(t)$'s are respectively given by 
\begin{align}
T_1(t) &= V(t) (\dot{\Lambda}(t) J_1 - J_1 \dot{\Lambda(t)},\dots, \dot{\Lambda}(t) J_{d+1} - J_{d+1} \dot{\Lambda(t)}) V^\tp(t), \\
T_2(t) &= V(t) ({\Lambda}^2(t) J_1 + J_1 {\Lambda}^2(t),\dots, {\Lambda}^2(t) J_{d+1} + J_{d+1} {\Lambda}^2(t)) V^\tp(t),\\
T_3(t) &= V(t) ( {\Lambda}(t) J_1 {\Lambda}(t),\dots,  {\Lambda}(t) J_{d+1} {\Lambda}(t)) V^\tp(t).
\end{align}
By similar calculations in proofs of Lemmas~\ref{lemma:projectionT2} and \ref{lemma:projectionT3}, we may easily obtain the following characterizations of $\proj_{c(t)}^{\mathbb{T}} (T_j(t)), j = 1,2,3$.
\begin{lemma}\label{lemma:newmodelprojectionTj}
Let $c(t), \Lambda(t), T_1(t), T_2(t), T_3(t)$ be as above. We have 
\begin{enumerate}
\item $T_1(t) \in \mathbb{T}_{c(t)} \Flag(n_1,\dots, n_d;n)$.
\item $\proj_{c(t)}^{\mathbb{T}} (T_2(t)) = 0$.
\item $\proj_{c(t)}^{\mathbb{T}} (T_3(t)) = 0$.
\end{enumerate}
\end{lemma}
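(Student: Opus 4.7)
The plan is to verify each of the three claims by block-wise manipulation of $\Lambda(t) \in \mathfrak{so}(n)$, partitioned as $(\Lambda(p,q))_{p,q=1}^{d+1}$ with respect to $n = m_1 + \cdots + m_{d+1}$, and then to read off each projection from the explicit formula in \eqref{prop:newmodelprojection:tangent}. The arguments for (1) and (2) are essentially identical to their counterparts for the classical embedding in Lemma~\ref{lemma:projectionT2}, with $d$ replaced by $d+1$; the real content lies in (3), where the enlarged embedding $\tilde{\iota}$ removes the correction term that appeared in Lemma~\ref{lemma:projectionT3}.

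For (1), the computation in \eqref{eq:basicalculation} (applied to $\dot{\Lambda}$ in place of $\Lambda$) shows that $\dot{\Lambda} J_k - J_k \dot{\Lambda}$ has $(p,q)$ block equal to $2(\delta_{qk} - \delta_{pk})\dot{\Lambda}(p,q)$, so it is supported on the $k$-th row and column and vanishes on the diagonal. Comparing the $(k,l)$ block of the $k$-th and $l$-th components yields $X_k(k,l) = -X_l(k,l)$, which together with the obvious vanishing of all other blocks is exactly \eqref{prop:newmodeltangentFlag:eq}. For (2), the proof of Lemma~\ref{lemma:projectionT2} carries over verbatim: the $(p,q)$ block of $\Lambda^2 J_k + J_k \Lambda^2$ equals $-2(1 - \delta_{pk} - \delta_{qk})(\Lambda^2)(p,q)$, which vanishes on every block of the form $(k,l)$ or $(l,k)$ with $l \ne k$; since \eqref{prop:newmodelprojection:tangent} only reads off these blocks, the projection vanishes.

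The novelty is (3). Writing $Y_k := \Lambda J_k \Lambda$ and expanding block-wise as in \eqref{lemma:projectionT3:eq1} gives, for any $1 \le k \ne l \le d+1$,
\[
Y_k(k,l) - Y_l(k,l) = \Lambda(k,k)\Lambda(k,l) - \Lambda(k,l)\Lambda(l,l) = 0,
\]
the right-hand side vanishing because both diagonal blocks of $\Lambda$ are zero. Consequently every off-diagonal block that feeds into \eqref{prop:newmodelprojection:tangent} is zero, and so $\proj_{c(t)}^{\mathbb{T}}(T_3(t)) = 0$.

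The only conceptual point worth flagging is why (3) is stronger here than in Lemma~\ref{lemma:projectionT3}: in the classical embedding the index $d+1$ was unpaired in the tangent-space condition $X_k(k,l) = -X_l(k,l)$, which is exactly why the $(k,d+1)$ blocks of $T_3$ survived and produced the leftover term $-\Lambda_0 \Lambda_1$. The enlarged embedding $\tilde{\iota}$ introduces the extra $\Gr(m_{d+1},n)$ factor and thereby restores the pairing for all $1 \le k \ne l \le d+1$, so the telescoping cancellation above runs uniformly. No step presents a real obstacle — the work is entirely block-index bookkeeping — but this structural observation is what will allow the modified embedding to yield the simpler geodesic formula in the next subsection.
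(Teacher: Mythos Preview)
Your argument is correct and is exactly the block-index computation the paper has in mind (it gives no proof beyond the phrase ``by similar calculations in proofs of Lemmas~\ref{lemma:projectionT2} and \ref{lemma:projectionT3}''). Your explanation of why (3) now holds --- the extra $\Gr(m_{d+1},n)$ factor restores the pairing $X_k(k,l)=-X_l(k,l)$ for \emph{all} $1\le l\le d+1$, so the $(k,d+1)$ blocks that survived in Lemma~\ref{lemma:projectionT3} now cancel via $\Lambda(k,k)=\Lambda(l,l)=0$ --- is precisely the structural reason behind the simpler geodesic formula in Proposition~\ref{prop:newmodelgedoesic}.
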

\begin{proposition}\label{prop:newmodelgedoesic}
Let $c(t)$ be a curve on $\Flag(n_1,\dots,n_d;n)$ parametrized as 
\[
c(t) = V(t)  (J_1,\dots, J_{d+1})  V(t)^\tp
\]
for some differentiable curve $V(t)$ in $\O(n)$. Let $\Lambda(t)$ be the curve in $\mathfrak{so}(n)$ such that $\dot{V}(t) = V(t) \Lambda(t)$, where $\Lambda(t)$ is a curve in $\mathfrak{so}(n)$ partitioned as $\Lambda(t) = (\Lambda(p,q))_{p,q=1}^{d+1}$ with respect to $m_1 + \cdots + m_{d+1} = n$ and and 
$\Lambda(k,k)(t) \equiv 0, k =1,\dots, d+1$. Then $c(t)$ is a geodesic curve if and only if $V(t) =V(0) \exp(t \Lambda(0))$.
\end{proposition}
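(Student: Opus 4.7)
The plan is to exploit Lemma~\ref{lemma:newmodelprojectionTj} directly. Since the geodesic equation is $\proj^{\mathbb{T}}_{c(t)}(\ddot{c}(t)) = 0$ and $\ddot{c}(t) = T_1(t) + T_2(t) - 2T_3(t)$, the lemma collapses this condition to $T_1(t) = 0$, because $T_1(t)$ already lies in $\mathbb{T}_{c(t)} \Flag(n_1,\dots,n_d;n)$ while the projections of $T_2(t)$ and $T_3(t)$ vanish identically. This is the key structural difference from the classical embedding: the extra factor $\Gr(m_{d+1}, n)$ absorbs exactly the obstruction that produced the nontrivial term $-\Lambda_0(t)\Lambda_1(t)$ in Proposition~\ref{prop:gedoesic}.

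Next I would unpack $T_1(t) = 0$ blockwise. Recalling the identity \eqref{eq:basicalculation}, the matrix $\dot{\Lambda}(t) J_k - J_k \dot{\Lambda}(t)$ has nonzero blocks only in the $k$-th block-row and $k$-th block-column, where the entries are exactly $\pm 2 \dot{\Lambda}(k,l)(t)$. Therefore $\dot{\Lambda}(t) J_k - J_k \dot{\Lambda}(t) = 0$ forces $\dot{\Lambda}(k, l)(t) = 0$ for every $l \neq k$. Since $T_1(t) = 0$ imposes this condition for each $k = 1, \dots, d+1$, we conclude that every off-diagonal block of $\dot{\Lambda}(t)$ vanishes. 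Combined with the fact that $\Lambda(k,k)(t) \equiv 0$ (so $\dot{\Lambda}(k,k)(t) \equiv 0$) from Proposition~\ref{prop:newmodeltangentFlag}, this yields $\dot{\Lambda}(t) \equiv 0$, i.e., $\Lambda(t) \equiv \Lambda(0)$.

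Finally I would integrate the defining ODE $\dot{V}(t) = V(t) \Lambda(t) = V(t) \Lambda(0)$ with initial condition $V(0)$, whose unique solution in $\O(n)$ is $V(t) = V(0) \exp(t \Lambda(0))$. Conversely, if $V(t) = V(0)\exp(t\Lambda(0))$ then $\Lambda(t) \equiv \Lambda(0)$ so $\dot{\Lambda}(t) \equiv 0$, hence $T_1(t) \equiv 0$, and by Lemma~\ref{lemma:newmodelprojectionTj} the projection of $\ddot{c}(t)$ vanishes identically, establishing the geodesic property.

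The only step requiring any care is the block-by-block argument in the second paragraph; everything else is either immediate from the preceding lemma or a standard ODE. Since \eqref{eq:basicalculation} makes the block structure of $\dot{\Lambda}(t) J_k - J_k \dot{\Lambda}(t)$ fully explicit, no real obstacle is anticipated.
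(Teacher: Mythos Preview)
Your proposal is correct and follows essentially the same approach as the paper's proof: invoke Lemma~\ref{lemma:newmodelprojectionTj} to reduce the geodesic equation to $T_1(t)=0$, use \eqref{eq:basicalculation} to deduce $\dot{\Lambda}(t)\equiv 0$, and then integrate $\dot{V}(t)=V(t)\Lambda(0)$. You add a bit more detail (the explicit converse and the block-by-block reading of \eqref{eq:basicalculation}), but the logical skeleton is identical.
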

\begin{proof}
Since $c(t)$ is a geodesic if and only if $\proj_{c(t)} (\ddot{c}(t)) \equiv 0$, Lemma~\ref{lemma:newmodelprojectionTj} implies that $c(t)$ is a geodesic curve if and only if 
\[
\dot{\Lambda}(t) J_k - J_k \dot{\Lambda(t)} = 0,\quad k =1,\dots, d+1.
\]
By \eqref{eq:basicalculation}, we may conclude that $c(t)$ is a geodesic if and only if $\dot{\Lambda}(t) \equiv 0$, i.e., $\Lambda(t) = \Lambda(0)$. This implies that $V(t)$ is determined by the equation $\dot{V}(t) = V(t) \Lambda(0)$, from which we may conclude that $V(t) =V(0) \exp(t \Lambda(0))$.
\end{proof}

\section{The comparison of Riemannian metrics on flag manifolds} The goal of this section is to discuss relations among three Riemannian metrics on a flag manifold $\Flag(n_1,\dots, n_d;n)$. We recall that the two metrics discussed in this paper are respectively induced by the embedding $\varepsilon: \Flag(n_1,\dots, n_d;n) \hookrightarrow \prod_{k=1}^{d} \O(n)$ given in \eqref{prop:modelFlag:eq0} and $\tilde{\varepsilon}: \Flag(n_1,\dots, n_d;n) \hookrightarrow \prod_{k=1}^{d+1} \O(n)$ given in \eqref{prop:newmodelFlag:eq0}. For notational simplicity, we denote the two induced metrics by $g^e$ and $\tilde{g}^e$, respectively. Yet there is another metric induced from the homogeneous space structure of $\Flag(n_1,\dots, n_d;n)$, which is discussed thoroughly in \cite{YWL19}. We denote this quotient metric by $g^q$. 

\begin{proposition}
The Riemaannian metrics $\tilde{g}^e$ and $g^q$ coincide. Moreover, $\tilde{g}^e$ and $g^e$ coincide with $g^q$ when $d = 1$, in which case $\Flag(n_1,\dots, n_d;n)$ is simply the Grassmann manifold $\Gr(n_1;n)$.
\end{proposition}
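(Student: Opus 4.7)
The plan is to evaluate both $\tilde{g}^e$ and $g^q$ in the common parametrization of tangent vectors by $\Lambda \in \mathfrak{so}(n)$ with vanishing diagonal blocks $\Lambda(k,k) = 0$, as supplied by Corollary~\ref{cor:curve} and Proposition~\ref{prop:newmodeltangentFlag}, and then check that the two bilinear forms agree block by block.

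First, I would recall the homogeneous space presentation $\Flag(n_1,\dots,n_d;n) \cong \O(n)/P$ with $P = \O(m_1) \times \cdots \times \O(m_{d+1})$, already noted immediately after Proposition~\ref{prop:modelFlag}. Because $P$ is compact and the bi-invariant trace form on $\mathfrak{so}(n)$ is $\mathrm{Ad}(\O(n))$-invariant, its restriction to the reductive complement $\mathfrak{m} = \mathfrak{p}^{\perp}$---which is exactly the space of block-zero-diagonal skew matrices parametrizing tangent vectors in Corollary~\ref{cor:curve}---descends to the quotient metric $g^q$. Concretely, on $\mathfrak{m}$ the trace form reads
\[
g^q(\Lambda,M) \,=\, -c\,\tr(\Lambda M) \,=\, 2c \sum_{1 \le j < k \le d+1} \tr(\Lambda(j,k)^\tp M(j,k)),
\]
where $c>0$ is the normalization constant fixed in \cite{YWL19}.

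Second, I would express $\tilde{g}^e$ in the same coordinates. At $\tilde{\mathfrak{f}} = V(J_1,\dots,J_{d+1}) V^{\tp}$, the tangent vector corresponding to $\Lambda$ is $V(X_1,\dots,X_{d+1}) V^{\tp}$ with $X_k = \tfrac12 (\Lambda J_k - J_k \Lambda)$. Writing $J_k = \diag(\epsilon_1 \I_{m_1}, \dots, \epsilon_{d+1} \I_{m_{d+1}})$ with $\epsilon_p = 2\delta_{pk} - 1$, a direct block computation gives $X_k(p,q) = \tfrac{\epsilon_q - \epsilon_p}{2}\,\Lambda(p,q)$, which vanishes unless exactly one of $p,q$ equals $k$. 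Expanding $\sum_{k=1}^{d+1} \tr(X_k Y_k)$ block by block and using the skew-symmetry $\Lambda(l,k) = -\Lambda(k,l)^{\tp}$, each unordered pair $\{j,k\}$ contributes through \emph{two} values of the outer summation index, yielding
\[
\tilde{g}^e(X,Y) \,=\, 4 \sum_{1 \le j < k \le d+1} \tr(\Lambda(j,k)^{\tp} M(j,k)),
\]
which coincides with $g^q$ once $c$ is chosen as in \cite{YWL19}. This establishes the first assertion.

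For the $d = 1$ case, there is a single off-diagonal block $\Lambda(1,2) \in \mathbb{R}^{n_1 \times (n-n_1)}$, and the identity $J_2 = -J_1$ forces $X_2 = -X_1$; consequently $\tilde{g}^e$ and $g^e$ are each proportional to $\tr(\Lambda(1,2)^{\tp} M(1,2))$, the unique (up to scale) $\O(n)$-invariant metric on $\Gr(n_1,n)$, and so both agree with $g^q$ under the matching normalization. The main obstacle is not conceptual but combinatorial: correctly accounting for the factors of $2$ that arise from the symmetry of each $X_k$, from the constraint $X_k(k,l) = -X_l(k,l)$, and from the normalization convention for $g^q$ inherited from \cite{YWL19}. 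Once these are reconciled, the equality reduces to a single line of block arithmetic on $\Lambda J_k - J_k \Lambda$.
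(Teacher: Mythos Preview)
The paper does not supply a proof of this proposition; it is stated as an immediate consequence of the explicit metric formulas \eqref{eq:metricFlag1} and \eqref{eq:newmodelmetricFlag}--\eqref{eq:newmodelmetricFlag1}, together with the remark there that under the modified embedding every off--diagonal block $\Lambda(j,k)$ appears with the same weight, whereas under the classical embedding the blocks touching the index $d+1$ carry half the weight of the others. Your write--up fills this in and, in addition, makes the link to $g^q$ explicit via the reductive decomposition $\mathfrak{so}(n)=\mathfrak{p}\oplus\mathfrak{m}$, which the paper leaves implicit.

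Your block computation is correct: with $X_k(p,q)=(\delta_{qk}-\delta_{pk})\Lambda(p,q)$ one gets $\sum_{k=1}^{d+1}\tr(X_kY_k)=4\sum_{j<k}\tr\bigl(\Lambda(j,k)^{\tp}M(j,k)\bigr)$, matching \eqref{eq:newmodelmetricFlag1}. The only point that deserves tightening is the normalization. You hedge with ``once $c$ is chosen as in \cite{YWL19}'' and ``under the matching normalization'', and indeed some hedging is unavoidable since the bi--invariant form on $\mathfrak{so}(n)$ is unique only up to scale. But note that your own calculation for $d=1$ gives $\tilde g^e=2\,g^e$ (because $J_2=-J_1$ forces $X_2=-X_1$, hence $\tr(X_2Y_2)=\tr(X_1Y_1)$), so the three metrics cannot all be literally equal; ``coincide'' in the proposition must therefore be read as ``agree up to a global constant'', or else one must pin down the exact constant used for $g^q$ in \cite{YWL19} and state which of $g^e$, $\tilde g^e$ it matches. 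Saying this explicitly would make your argument complete rather than merely plausible.
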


We will see in Proposition~\ref{prop:construction} that both $g^e$ and $\tilde{g}^e = g^q$ can be constructed by a uniform method. To begin with, we notice that in general, any smooth map 
\[
\varphi:  \left( \mathbb{R}^{n\times n} \right)^d \to  \mathbb{R}^{n\times n}
\]
induces an embedding $\kappa_{\varphi}: \left( \mathbb{R}^{n\times n} \right)^d \to \left( \mathbb{R}^{n\times n} \right)^{d+1}$ defined by 
\[
\kappa_{\varphi}(A_1,\dots, A_d) = (A_1,\dots, A_d, \varphi(A_1,\dots,A_d)),\quad A_j\in \mathbb{R}^{n\times n}, j=1,\dots, d.
\]
Hence we have another embedding $\kappa_{\varphi} \circ \varepsilon$ of $\Flag(n_1,\dots, n_d;n)$ into $\O(n)^{d+1} \subseteq  \left( \mathbb{R}^{n\times n}\right)^{d+1}$, which induces a metric $g^{\varphi}$ on $\Flag(n_1,\dots, n_d;n)$ from the Euclidean metric on $\left( \mathbb{R}^{n\times n} \right)^{d+1}$.

\begin{proposition}\label{prop:construction}
We have the following:
\begin{itemize}
\item $g^{\varphi} = g^e$ if and only if $\varphi$ is a constant map on $\varepsilon(\Flag(n_1,\dots, n_d;n))$. In particular, $g^{\varphi} = g^e$ if $\varphi$ is a constant map.
\item There exists $\varphi$ such that $g^{\varphi} = \widetilde{g}^e$.
\end{itemize}
\end{proposition}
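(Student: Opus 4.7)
The plan is to unpack both statements from the chain rule, isolating the extra contribution that $\varphi$ makes to the induced metric, and then to exhibit an explicit $\varphi$ for the second bullet using the algebraic relation among the involutions $Q_k$ attached to an orthogonal decomposition of $\mathbb{R}^n$.

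For the first bullet, since the Euclidean metric on $(\mathbb{R}^{n\times n})^{d+1}$ splits as the sum of the Euclidean metrics on the first $d$ factors and on the last factor, the chain rule yields, for tangent vectors $\xi,\eta \in \mathbb{T}_{\mathfrak{f}}\Flag(n_1,\dots,n_d;n)$,
\[
g^{\varphi}_{\mathfrak{f}}(\xi,\eta) \;=\; g^{e}_{\mathfrak{f}}(\xi,\eta) \;+\; \tr\!\left( D(\varphi\circ\varepsilon)_{\mathfrak{f}}(\xi)^{\tp}\, D(\varphi\circ\varepsilon)_{\mathfrak{f}}(\eta) \right).
\]
The correction term is the pullback of the Euclidean inner product under $d(\varphi\circ\varepsilon)_{\mathfrak{f}}$, hence is positive semidefinite. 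Thus it vanishes as a bilinear form at every $\mathfrak{f}$ if and only if $d(\varphi\circ\varepsilon)\equiv 0$ on $\Flag(n_1,\dots,n_d;n)$. Since the flag manifold is connected (being $\SO(n)/S(\O(m_1)\times\cdots\times\O(m_{d+1}))$), this is equivalent to $\varphi\circ\varepsilon$ being constant, i.e., $\varphi$ being constant on $\varepsilon(\Flag(n_1,\dots,n_d;n))$. The parenthetical ``in particular'' clause follows trivially.

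For the second bullet, I would construct $\varphi$ explicitly from an identity satisfied on $\tilde\varepsilon(\Flag)$. Because $\mathbb{W}_1,\dots,\mathbb{W}_{d+1}$ form an orthogonal decomposition of $\mathbb{R}^n$, the orthogonal projectors satisfy $\sum_{k=1}^{d+1} P_{\mathbb{W}_k} = \I_n$. Substituting $P_{\mathbb{W}_k} = \tfrac{1}{2}(\I_n + Q_k)$ from \cite[Proposition~2.3]{LLY20} (as already used in the proof of Proposition~\ref{prop:modelFlag}) collapses this to $\sum_{k=1}^{d+1} Q_k = (1-d)\I_n$, so
\[
Q_{d+1} \;=\; (1-d)\I_n \;-\; \sum_{k=1}^{d} Q_k.
\]
Setting $\varphi(A_1,\dots,A_d) \coloneqq (1-d)\I_n - \sum_{k=1}^{d} A_k$ then makes $\kappa_{\varphi}\circ\varepsilon = \widetilde{\varepsilon}$ as maps into $(\mathbb{R}^{n\times n})^{d+1}$, so the two embeddings pull back the Euclidean metric to the same Riemannian metric on the flag manifold, which gives $g^{\varphi} = \widetilde{g}^{e}$.

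There is no serious obstacle in either half; the only subtlety is that the condition $g^{\varphi} = g^{e}$ only forces $\varphi$ to be constant on $\varepsilon(\Flag(n_1,\dots,n_d;n))$ rather than on the ambient $(\mathbb{R}^{n\times n})^d$, so the writeup must be careful to distinguish these and to invoke connectedness when promoting ``locally constant'' to ``constant''.
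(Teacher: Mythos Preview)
Your argument for the first bullet is essentially identical to the paper's: both compute the correction term coming from the last factor, observe it is positive semidefinite, conclude the differential of $\varphi\circ\varepsilon$ vanishes on tangent spaces, and then invoke connectedness of the flag manifold to pass from locally constant to constant.

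For the second bullet you take a genuinely different, and cleaner, route. The paper argues abstractly: it defines $\psi$ on the compact set $C=\varepsilon(\Flag(n_1,\dots,n_d;n))$ by sending $(Q_1,\dots,Q_d)$ to the involution $Q_{d+1}$ associated with $\bigl(\bigoplus_{j=1}^d\operatorname{im}(Q_j+\I_n)\bigr)^\perp$, then extends each matrix entry $p_{ij}\circ\psi$ to a smooth function on all of $(\mathbb{R}^{n\times n})^d$ using a neighbourhood extension together with a partition of unity. Your construction bypasses the extension machinery entirely by noticing the algebraic identity $\sum_{k=1}^{d+1}Q_k=(1-d)\I_n$ (equivalent to $\sum_k P_{\mathbb{W}_k}=\I_n$), so the affine map $\varphi(A_1,\dots,A_d)=(1-d)\I_n-\sum_{k=1}^d A_k$ already does the job globally and gives $\kappa_\varphi\circ\varepsilon=\widetilde{\varepsilon}$ on the nose. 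This is more explicit and avoids any appeal to compactness or partitions of unity; the paper's approach, on the other hand, would still work in situations where no such closed-form relation among the $Q_k$ is available.
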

\begin{proof}
The ``if" part of the first statement can be verified by a straightforward calculation. For the ``only if" part, we notice that $g^{\varphi} = g^e$ implies that the differential map $d_{(Q_1,\dots, Q_d)}\varphi$ must be zero on $T_{(Q_1,\dots, Q_d)} \varepsilon(\Flag(n_1,\dots, n_d;n))$ at any $(Q_1,\dots, Q_d) \in \varepsilon(\Flag(n_1,\dots, n_d;n))$. Since $\varepsilon(\Flag(n_1,\dots, n_d;n))$ is connected and $\varphi$ is continuous, we may conclude that $\varphi$ is a constant map on $\varepsilon(\Flag(n_1,\dots, n_d;n))$. 

For the second statement, we notice that $C \coloneqq \varepsilon(\Flag(n_1,\dots, n_d;n))$ is a compact subset of $X \coloneqq \left( \mathbb{R}^{n\times n} \right)^d $ and we can define 
\[
\psi: C \to \O(n) \subseteq \mathbb{R}^{n\times n},\quad \psi(Q_1,\dots, Q_d) = Q_{d+1},
\]
where $(Q_{d+1} + \I_n)/2$ is the projection matrix of $\left( \bigoplus_{j=1}^d \im(Q_j + \I_n) \right)^\perp$. We denote by $p_{ij}$ the projection map from $\mathbb{R}^{n\times n}$ onto its $(i,j)$-th entry, $1\le i,j\le n$. It is clear that $p_{ij}\circ \psi: C \to \mathbb{R}$ is a smooth function. The compactness of $C$ in $X$ implies that $p_{ij}\circ \psi$ has a smooth extension $\varphi_{ij}: X \to \mathbb{R}$. Indeed, we can first extend the function $p_{ij}\circ \psi$ smoothly to an open neighbourhood of $C$ and then further extend it smoothly to the whole $X$ by a smooth partition of unity. Now we have a smooth map 
\[
\varphi \coloneqq (\varphi_{ij}): \left( \mathbb{R}^{n\times n} \right)^d \to \mathbb{R}^{n\times n}
\]
which extends $\psi$ and hence we have $g^{\varphi} = \widetilde{g}^e$.
\end{proof}
\section{A coordinate descent method for optimizations on flag manifolds}
Given a strictly increasing sequence $n_1 < \cdots < n_d$, we define
\[
m_1 \coloneqq n_1,\quad m_{d+1}  \coloneqq n - n_d, \quad m_{j} \coloneqq n_j - n_{j-1},\quad j = 2,\dots, d+1.
\]
We recall from \eqref{eq:embedding2} that a flag $\{\mathbb{V}_k\}_{k=1}^d \in \Flag(n_1,\dots, n_d;n)$ can be regarded as $\{\mathbb{W}_j\}_{j=1}^{d+1}$ via the modified embedding $\widetilde{\iota}:\Flag(n_1,\dots, n_d;n) \hookrightarrow \prod_{j=1}^{d+1} \Gr(m_j,n)$, where $\mathbb{W}_j$ is the orthogonal complement of $\mathbb{V}_{j-1}$ in $\mathbb{V}_{j}, 2 \le j \le d+1$, $\mathbb{W}_1 = \mathbb{V}_1$ and $\mathbb{V}_{d+1} = \mathbb{R}^n$. Therefore, an optimization problem on $\Flag(n_1,\dots, n_d;n)$ has the following form:
\begin{align}\label{eq:optimization on flag}
\min \quad & f( \mathbb{W}_1,\dots, \mathbb{W}_{d+1}) \nonumber \\
\text{s.t.} \quad & \mathbb{W}_j \in \Gr(m_j, n), 1\le j \le d + 1 \\
\quad & \mathbb{W}_j \perp \mathbb{W}_l, 1 \le j < l \le d + 1  \nonumber
\end{align}
Here $f$ is a function on $\Flag(n_1,\dots, n_d;n)$. We propose Algorithm~\ref{alternating}, an alternating type algorithm to solve the problem~\eqref{eq:optimization on flag}.
\begin{algorithm}[!htbp] \label{alg:cord}
\caption{Coordinate minimization method for optimization on flag manifolds}\label{alternating}
\begin{algorithmic}[1]
\renewcommand{\algorithmicrequire}{\textbf{Input}}
\Require
A differentiable function $f$ on $\Flag(n_1,\dots, n_d;n)$
\renewcommand{\algorithmicrequire}{\textbf{Output}}
\Require
A critical point of $f$
\renewcommand{\algorithmicrequire}{\textbf{Initialization}}
\Require
Choose an initial point $(\mathbb{W}_{1},\dots, \mathbb{W}_{d+1}) \in \prod_{j=1}^{d+1} \Gr(m_j,n)$ 
\While{not converge}
\State {set $(s,t) = (1,2)$}
\For{$1\le s < t \le d+1$}
\State{Solve the following sub-problem for $(\mathbb{X}_s,\mathbb{X}_t) \in \Gr(m_s,n)\times \Gr(m_t, n)$:
\begin{align}\label{eq:subproblem}
\min \quad & f( \mathbb{W}_1,\dots,\mathbb{W}_{s-1},\mathbb{X}_s,\mathbb{W}_{s+1},\dots, \mathbb{W}_{t-1},\mathbb{X}_t,\mathbb{W}_{t+1},\dots \mathbb{W}_{d+1}) \nonumber \\
\text{s.t.} \quad & \mathbb{X}_s \perp \mathbb{X}_t \\
\quad & \mathbb{X}_s \perp \mathbb{W}_j, 1 \le j \ne s \le d + 1 \nonumber \\
\quad & \mathbb{X}_t \perp \mathbb{W}_j, 1 \le j \ne t \le d + 1 \nonumber
\end{align}}
\State Update $(\mathbb{W}_s,\mathbb{W}_t)$ by the solution $(\overline{\mathbb{X}}_s, \overline{\mathbb{X}}_t)$ to \eqref{eq:subproblem}.
\State Update $(s,t)$ by $(s+1,t)$ if $s+1 < t$ and by $(s,t+1)$ otherwise
\EndFor
\EndWhile
\end{algorithmic}
\end{algorithm}

We remark that the sub-problem \eqref{eq:subproblem} in Algorithm~\ref{alternating} is an optimization problem on a Grassmann manifold. Indeed, we notice that $\mathbb{W}_{j}$ in \eqref{eq:subproblem} is fixed whenever $j\ne s,t$. This implies
\[
\mathbb{X}_s \oplus \mathbb{X}_t  = \left(\bigoplus_{j\ne s,t} \mathbb{W}_j\right)^\perp
\]
is a fixed $(m_s + m_t)$-dimensional subspace of $\mathbb{R}^n$. So the submanifold given by fixed $\mathbb{W}_{j}, j\neq s,t$ and $\mathbb{X}_s \oplus \mathbb{X}_t$ is isomorphic to $\Gr(m_s, m_s + m_t)$. This submanifold is actually a totally-geodesic manifold, which is clear from the geodesic formulas of flag and Grassmann manifolds. Thus the objective function 
\[
f( \mathbb{W}_1,\dots,\mathbb{W}_{s-1},\mathbb{X}_s,\mathbb{W}_{s+1},\dots, \mathbb{W}_{t-1},\mathbb{X}_t,\mathbb{W}_{t+1},\dots \mathbb{W}_{d+1})
\]
can be recognized as a function on the submanifold $\Gr(m_s,m_s + m_t)$. Furthermore, at a given point, there are $d(d+1)/2$ such submanifolds indexed by $1 \leq s < t \leq d+1$. The tangent spaces of those submanifolds are orthogonal to each other and span the whole tangent space. Algorithm~\ref{alternating} is a generalization of coordinate descent algorithm in Euclidean space.

\subsection{separation of subspaces}
Given $d+1$ subspaces $\mathbb{U}_1,\dots,\mathbb{U}_{d+1}$ of some ambient space $\mathbb{R}^N$. The separation problem can be mathematically formulated as the following optimization problem on a flag manifold:
\begin{align}\label{eq:nearest point on flag}
\min \quad & F(\mathbb{W}) := \sum_{j=1}^{d+1} \lVert \tau_j(\mathbb{U}_j) - \tau_j(\mathbb{W}_j) \rVert_F^2 \nonumber \\
\text{s.t.} \quad & \mathbb{W}_j \in \Gr(m_j, n), 1\le j \le d + 1 \\
\quad & \mathbb{W}_j \perp \mathbb{W}_l, 1 \le j < l \le d + 1  \nonumber
\end{align}
Here $m_j = \dim \mathbb{U}_j, 1\le j \le d+1$, $n = \sum_{j=1}^{d+1} m_j$ and $\tau_j$ is the embedding of $\Gr(m_j,n)$ into $\O(n) \cap \S_n$ defined by 
\[
\tau_j (\mathbb{W}) = V \begin{bmatrix}
-\I_{p} & 0 & 0 \\
0 & \I_{m_j} & 0 \\
0 & 0 & -\I_{q}
\end{bmatrix} V^\tp
\]
where $p = \sum_{l=1}^{j-1} m_l$, $q = \sum_{l=j+1}^{d+1} m_l$ and $V = [v_1,\dots, v_{n}] = [V_1, \dots, V_{d+1}]\in \O(n)$ such that $[v_{p+1},\dots, v_{q-1}] = V_j, \operatorname{span}\{v_{p+1},\dots, v_{q-1}\} = \mathbb{W}_j$. 

\begin{lemma}\label{lem:gradient estimate}
Consider the maximization of linear function $f(Q) = \langle A, Q \rangle$ on the Grassmann manifold,
\begin{align*}
\max \quad & \langle A, Q \rangle  \\
\text{s.t.} \quad & Q \in \Gr(k, n) 
\end{align*}
The gradient of $f(Q)$ is given by 
\[
\nabla f(Q) = \frac{1}{4}(A+A^\tp - QAQ - QA^\tp Q).
\]
Let $(A+A^\tp)/2 = U \Lambda U^\tp$ be an eigendecomposition of $(A+A^\tp)/2$ such that $\Lambda = \diag(\lambda_1, \dots, \lambda_n)$, $\lambda_1 \geq \dots \geq \lambda_n$. Then $Q^* = UI_{k, n-k} U^\tp$ is a maximizer of $f(Q)$. Furthermore, 
\[
2\|\Lambda\| (f(Q^*) - f(Q)) \geq \|\nabla f(Q)\|^2.
\]
\end{lemma}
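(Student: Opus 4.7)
The plan is to (i) derive the gradient formula by projecting the Euclidean gradient onto $\mathbb{T}_Q \Gr(k,n)$, (ii) identify $Q^*$ via Ky Fan's maximum principle, and (iii) establish the quadratic growth inequality by a Cauchy-interlacing argument on a block decomposition of $V^\tp S V$.

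For (i), symmetry of $Q$ gives $f(Q) = \tr(A^\tp Q) = \tr(SQ)$ where $S \coloneqq (A + A^\tp)/2$, so the gradient of $f$ as an unconstrained function on $\S_n$ is $S$. By Proposition~\ref{prop:tangent}, $\mathbb{T}_Q \Gr(k,n) = \{X \in \S_n : XQ + QX = 0,\ \tr X = 0\}$. Any $Y \in \S_n$ splits as $Y = \tfrac{1}{2}(Y + QYQ) + \tfrac{1}{2}(Y - QYQ)$; using $Q^2 = \I_n$, the second summand anti-commutes with $Q$ while the first commutes, and a short trace calculation shows the two pieces are Frobenius-orthogonal. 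Moreover $\tr(QYQ) = \tr(Y)$ makes the second summand trace-zero automatically. Hence the tangential projection is $Y \mapsto \tfrac{1}{2}(Y - QYQ)$, and substituting $Y = S$ reproduces $\nabla f(Q) = \tfrac{1}{4}(A + A^\tp - QAQ - QA^\tp Q)$. For (ii), writing $Q = VI_{k,n-k}V^\tp$ and partitioning $\tilde S \coloneqq V^\tp S V = (\tilde S_{pq})_{p,q=1}^2$ with blocks of sizes $k$ and $n-k$, we obtain $f(Q) = \tr(\tilde S\, I_{k,n-k}) = \tr(\tilde S_{11}) - \tr(\tilde S_{22})$; Ky Fan's maximum principle yields $\tr(\tilde S_{11}) \le \sum_{i=1}^k \lambda_i$, and since $\tr(\tilde S_{11}) + \tr(\tilde S_{22}) = \tr S$ is fixed, also $\tr(\tilde S_{22}) \ge \sum_{i=k+1}^n \lambda_i$, with both extrema simultaneously attained at $V = U$. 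Hence $Q^* = UI_{k,n-k}U^\tp$ is a maximizer and $f(Q^*) = \sum_{i=1}^k \lambda_i - \sum_{i=k+1}^n \lambda_i$.

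For (iii), by orthogonal invariance of the Frobenius norm we may work in the diagonalizing basis, so assume $S = \tilde S$ and $Q = I_{k,n-k}$. A direct block calculation gives
\[
\tilde S - I_{k,n-k}\, \tilde S \, I_{k,n-k} = \begin{bmatrix} 0 & 2\tilde S_{12} \\ 2\tilde S_{12}^\tp & 0 \end{bmatrix},
\]
so $\|\nabla f(Q)\|^2 = \tfrac{1}{4}\|\tilde S - I_{k,n-k}\tilde S\, I_{k,n-k}\|_F^2 = 2\|\tilde S_{12}\|_F^2$. Let $\mu_1 \ge \cdots \ge \mu_k$ and $\nu_1 \ge \cdots \ge \nu_{n-k}$ denote the eigenvalues of $\tilde S_{11}$ and $\tilde S_{22}$. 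The block Frobenius identity $\|\tilde S\|_F^2 = \|\tilde S_{11}\|_F^2 + 2\|\tilde S_{12}\|_F^2 + \|\tilde S_{22}\|_F^2$ then gives
\[
2\|\tilde S_{12}\|_F^2 = \sum_{i=1}^n \lambda_i^2 - \sum_{i=1}^k \mu_i^2 - \sum_{j=1}^{n-k} \nu_j^2.
\]
Cauchy interlacing yields $\lambda_i \ge \mu_i$ for $i \le k$ and $\nu_j \ge \lambda_{k+j}$ for $j \le n-k$, and also $|\mu_i|, |\nu_j| \le \|\Lambda\|$. Factoring $\lambda_i^2 - \mu_i^2 = (\lambda_i - \mu_i)(\lambda_i + \mu_i)$ with $\lambda_i - \mu_i \ge 0$ and $|\lambda_i + \mu_i| \le 2\|\Lambda\|$ gives $\lambda_i^2 - \mu_i^2 \le 2\|\Lambda\|(\lambda_i - \mu_i)$; the analogous $\lambda_{k+j}^2 - \nu_j^2 \le 2\|\Lambda\|(\nu_j - \lambda_{k+j})$ follows by the same factoring. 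Summing and matching with $f(Q^*) - f(Q) = \bigl(\sum_{i=1}^k \lambda_i - \sum_i \mu_i\bigr) + \bigl(\sum_j \nu_j - \sum_{i=k+1}^n \lambda_i\bigr)$ produces $\|\nabla f(Q)\|^2 \le 2\|\Lambda\|(f(Q^*) - f(Q))$.

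The main subtlety lies in the last step: although $\lambda_i - \mu_i \ge 0$ by interlacing, the quantity $\lambda_i^2 - \mu_i^2$ can be negative when $|\mu_i| > |\lambda_i|$, so the uniform bound on $|\lambda_i + \mu_i|$ must be combined with the sign of $\lambda_i - \mu_i$ via a short case analysis on $\sign(\lambda_i + \mu_i)$ in order to obtain the claimed term-by-term inequality.
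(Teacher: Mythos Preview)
Your proof is correct and takes a genuinely different route from the paper's.

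For part (iii), the paper introduces an auxiliary function $g(Q)=c\langle\Lambda,Q\rangle-\tfrac{1}{2}\tr(\Lambda Q\Lambda Q)$ for $c>2\|\Lambda\|$, shows via the first-order condition that any maximizer of $g$ must commute with $\Lambda$ (because $Q^{**}\Lambda+\Lambda Q^{**}-cI$ is invertible), concludes $Q^*$ maximizes $g$, and then lets $c\downarrow 2\|\Lambda\|$. Your argument instead works in the basis diagonalizing $Q$, expresses $\|\nabla f(Q)\|^2=2\|S_{12}\|_F^2=\sum\lambda_i^2-\sum\mu_i^2-\sum\nu_j^2$ from the block Frobenius identity, and then applies Cauchy interlacing term by term. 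Your approach is more elementary in that it avoids setting up and solving a second optimization problem, at the cost of invoking interlacing; the paper's approach is more self-contained but less direct. Both the gradient derivation and the Ky Fan argument for $Q^*$ in your (i)--(ii) are fine and slightly more explicit than the paper's treatment, which simply cites the gradient formula and reduces the maximization to choosing signs $\delta_i=\pm1$.

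One minor remark: the ``subtlety'' you flag at the end is not actually a subtlety. Since $\lambda_i-\mu_i\ge 0$ and $\lambda_i+\mu_i\le 2\|\Lambda\|$, the inequality $(\lambda_i-\mu_i)(\lambda_i+\mu_i)\le 2\|\Lambda\|(\lambda_i-\mu_i)$ is equivalent to $(\lambda_i-\mu_i)(\lambda_i+\mu_i-2\|\Lambda\|)\le 0$, which holds as a product of a nonnegative and a nonpositive factor---no case analysis on the sign of $\lambda_i+\mu_i$ is needed. The analogous bound for the $\nu_j$ terms follows the same way.
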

\begin{proof}
The formula for gradient is given in \cite[Proposition~5.1]{LLY20}. The original problem is equivalent to
\begin{align*}
\max \quad & \langle \Lambda, Q \rangle  \\
\text{s.t.} \quad & Q \in \Gr(k, n)
\end{align*}
and we need to prove $Q = I_{k, n-k}$ is a maximizer. Using gradient formula, we can simplify the first order condition $\nabla f(Q^*) = 0$ to
\[
Q^* \Lambda = \Lambda Q^*.
\]
So $Q^*, A$ can be simultaneously diagonalized, and we can assume $Q^*$ is diagonalized. The original problem is equivalent to
\[
\min_{\delta_1 + \dots + \delta_n =2k -n, \delta_i = \pm 1} \lambda_1 \delta_1+ \dots + \lambda_n \delta_n.
\]
It is clear that $\delta_1 = \dots = \delta_k = 1, \delta_{k+1} = \dots = \delta_n = -1$ is a maximizer. So $Q^* = I_{k, n-k}$ is a maximizer. Now consider the last inequality. The term $\|\nabla f(Q)\|^2, f(Q^*) - f(Q)$ can be simplified
\begin{align*}
\|\nabla f(Q)\|^2 &= \frac{1}{4}\langle \Lambda - Q \Lambda Q, \Lambda - Q \Lambda Q \rangle\\
&= \frac{1}{2}\sum_{i = 1}^n \lambda_i^2 - \frac{1}{2}\tr(\Lambda Q \Lambda Q)\\
&= \frac{1}{2}\tr(\Lambda Q^* \Lambda Q^*) - \frac{1}{2}\tr(\Lambda Q \Lambda Q),
\end{align*}
\begin{align*}
f(Q^*) - f(Q) &= \langle \Lambda, Q^* \rangle - \langle \Lambda, Q \rangle.
\end{align*}
For any $c > 2\|\Lambda\|$, we have
\begin{align*}
c(f(Q^*) - f(Q)) - \|\nabla f(Q)\|^2 &= c \langle \Lambda, Q^* \rangle - c \langle \Lambda, Q \rangle - \tr(\Lambda Q^* \Lambda Q^*)/2 +\tr(\Lambda Q \Lambda Q)/2\\
&= g(Q^*) - g(Q),
\end{align*}
where $g(Q) = c \langle \Lambda, Q \rangle - \tr(\Lambda Q \Lambda Q)/2$. Assume $Q^{**}$ is a maximizer of $g(Q)$. The first order condition of $g(Q)$ is
\[
Q^{**}\Lambda Q^{**} \Lambda Q^{**} - c Q^{**} \Lambda Q^{**} - \Lambda Q^{**} \Lambda + c \Lambda = 0,
\]
which is equivalent to
\[
(Q^{**}\Lambda - \Lambda Q^{**})(Q^{**} \Lambda  + \Lambda Q^{**} - c I) = 0.
\]
By definition $c > 2\|\Lambda\| \geq \|Q^{**} \Lambda  + \Lambda Q^{**}\|$, so $Q^{**} \Lambda  + \Lambda Q^{**} - c I$ is invertible and $Q^{**}\Lambda = \Lambda Q^{**}$. So $Q^{**}, \Lambda$ can be simultaneously diagonalized. We can assume $Q^{**}$ is diagonalized. So
\[
g(Q^{**}) = \sum_{i=1}^n (2\|\Lambda\| \lambda_i \delta_i - \frac{1}{2} \lambda_i^2),
\]
where $\delta_i$ is the diagonal of $Q^{**}$. Again, $Q^*$ is a maximizer of $g(Q)$. So we have proved that $g(Q^*) \geq g(Q)$, i.e.,
\[
c(f(Q^*) - f(Q)) - \|\nabla f(Q)\|^2 \geq 0.
\]
Because $c$ is any number larger than $2\|\Lambda\|$, it also holds for $c = 2\|\Lambda\|$ and the proof is finished.
\end{proof} 

\begin{proposition}
If we apply Algorithm~\ref{alternating} to solve the problem~\eqref{eq:nearest point on flag}, then for each $1 \le s < t \le d+1$, the sub-problem has the form
\begin{align}\label{eq:alternating nearest point on flag}
\min \quad & \lVert A_1 - W \I_{m_s, m_t} W^\tp\rVert_F^2 + \lVert 
A_2 + W \I_{m_s, m_t} W^\tp \rVert_F^2  \\
\text{s.t.} \quad & W \in \O(m_s+m_t)  \nonumber
\end{align}
where $A_1,A_2 \in \O(m_s+m_t) \cap S_{m_s+m_t}$ are some fixed matrices. Moreover, the sub-problem has an explicit solution $W_\ast$ which is given by the SVD of $A_1 - A_2 = W_\ast \Sigma W_\ast^\tp$.

We denote the change of the value of $F$ at this step by $\Delta_{s,t}$. By previous discussion, the full gradient $\nabla F$ can be partition into $d(d+1)/2$ block components, such that one of the block $\nabla_{s, t} F$ corresponds to the subproblem. Then
\[
\|\tau_s(\mathbb{U}_s)-\tau_t(\mathbb{U}_t)\| |\Delta_{s,t}| \geq \|\nabla_{s, t} F\|^2.
\]
\end{proposition}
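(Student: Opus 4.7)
The plan is to reduce the sub\nobreakdash-problem to a single linear program on a smaller Grassmannian and then quote Lemma~\ref{lem:gradient estimate} for both the explicit minimizer and the gradient inequality.

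\emph{Reduction to $\Gr(m_s,m_s+m_t)$.}
Since every $\mathbb{W}_j$ with $j\neq s,t$ is frozen during the $(s,t)$ iteration, the orthogonality constraints force
\[
\mathbb{X}_s\oplus\mathbb{X}_t \;=\; \mathbb{Z} \;\coloneqq\; \Bigl(\bigoplus_{j\ne s,t}\mathbb{W}_j\Bigr)^{\perp},
\]
a fixed $(m_s+m_t)$-dimensional subspace. Let $V_{st}\in\mathbb{R}^{n\times(m_s+m_t)}$ have orthonormal columns spanning $\mathbb{Z}$ and extend to $V=[V_{st},V_{st}^{\perp}]\in\O(n)$. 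Any admissible pair arises as $\mathbb{X}_s = \spn(V_{st}W_s)$, $\mathbb{X}_t = \spn(V_{st}W_t)$ for some $W=[W_s,W_t]\in\O(m_s+m_t)$, and a direct calculation of the projectors shows
\[
\tau_s(\mathbb{X}_s) = V\begin{bmatrix} W\I_{m_s,m_t}W^\tp & 0 \\ 0 & -\I_{n-m_s-m_t}\end{bmatrix}V^\tp, \qquad \tau_t(\mathbb{X}_t) = V\begin{bmatrix} -W\I_{m_s,m_t}W^\tp & 0 \\ 0 & -\I_{n-m_s-m_t}\end{bmatrix}V^\tp,
\]
which already exhibits the $\pm W\I_{m_s,m_t}W^\tp$ sign pattern. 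The fact that the sub-manifold cut out by freezing the other $\mathbb{W}_j$ is totally geodesic is visible from the geodesic formula in Proposition~\ref{prop:newmodelgedoesic}, so restricting the $F$-gradient to it gives an intrinsic Grassmann gradient.

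\emph{The stated form and its closed form minimizer.}
Substituting the displayed formulas into~\eqref{eq:nearest point on flag} and using orthogonal invariance of $\|\cdot\|_F$ to conjugate by $V$, the off-diagonal and bottom-right blocks of $V^\tp\tau_s(\mathbb{U}_s)V$ and $V^\tp\tau_t(\mathbb{U}_t)V$ contribute only $W$-independent constants. Writing $A_1$, $A_2$ for the top-left $(m_s+m_t)\times(m_s+m_t)$ blocks of these conjugates yields the advertised form
\[
\min_{W\in\O(m_s+m_t)}\ \lVert A_1-W\I_{m_s,m_t}W^\tp\rVert_F^2 + \lVert A_2 + W\I_{m_s,m_t}W^\tp\rVert_F^2,
\]
with $A_1,A_2$ symmetric (and orthogonal in the cases stated). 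Expanding once more and noting $\|W\I_{m_s,m_t}W^\tp\|_F^2 = m_s+m_t$, the problem reduces to the linear maximization
\[
\max_{Q\in\Gr(m_s,m_s+m_t)}\ \langle A_1-A_2,\,Q\rangle, \qquad Q=W\I_{m_s,m_t}W^\tp.
\]
By Lemma~\ref{lem:gradient estimate} applied to the symmetric matrix $A_1-A_2$ (whose eigendecomposition coincides with its SVD), a maximizer is $Q_\ast = W_\ast\I_{m_s,m_t}W_\ast^\tp$ where $A_1-A_2=W_\ast\Sigma W_\ast^\tp$ with the eigenvalues of $\Sigma$ in decreasing order, which is the claimed explicit solution.

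\emph{Gradient inequality.}
The quantitative part of Lemma~\ref{lem:gradient estimate} gives
\[
2\lVert A_1-A_2\rVert\bigl(f(Q_\ast)-f(Q)\bigr) \ge \lVert\grad f(Q)\rVert^2,
\]
where $f(Q)=\langle A_1-A_2,Q\rangle$ and the first norm is the operator norm. Two elementary observations close the loop: (i) $A_1-A_2$ is the principal $(m_s+m_t)\times(m_s+m_t)$ block of $V^\tp\bigl(\tau_s(\mathbb{U}_s)-\tau_t(\mathbb{U}_t)\bigr)V$, whence $\lVert A_1-A_2\rVert \le \lVert \tau_s(\mathbb{U}_s)-\tau_t(\mathbb{U}_t)\rVert$; (ii) the quadratic expansion identifies $|\Delta_{s,t}|$ with $2(f(Q_\ast)-f(Q))$, and total geodesy of the sub-Grassmannian lets us identify $\nabla_{s,t}F$ with the Grassmann gradient $\grad f(Q)$ (up to a scalar that cancels on both sides). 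Combining (i) and (ii) yields $\lVert\tau_s(\mathbb{U}_s)-\tau_t(\mathbb{U}_t)\rVert\,|\Delta_{s,t}| \ge \lVert\nabla_{s,t}F\rVert^2$.

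\emph{Main obstacle.} The hard part is purely bookkeeping: keeping the constants straight between the sub-problem's $f(Q_\ast)-f(Q)$ and the big objective's $|\Delta_{s,t}|$, between $\grad f$ and $\nabla_{s,t}F$, and making sure the $\pm$ signs in the $\tau_s,\tau_t$ block decomposition produce exactly the $-A_1$ versus $+A_2$ split in the reduced objective. Once those constants are matched, both assertions of the proposition fall out of the linear-functional Lemma~\ref{lem:gradient estimate}.
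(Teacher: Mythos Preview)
Your proposal is correct and follows essentially the same route as the paper: fix the complement $\mathbb{Z}=(\bigoplus_{j\ne s,t}\mathbb{W}_j)^\perp$, write $\tau_s,\tau_t$ of the variable subspaces as $\pm V_{st}W\I_{m_s,m_t}W^\tp V_{st}^\tp$ plus a $W$-independent piece, set $A_1,A_2$ to be the $(m_s{+}m_t)$-principal blocks of $V_{st}^\tp\tau_s(\mathbb{U}_s)V_{st}$ and $V_{st}^\tp\tau_t(\mathbb{U}_t)V_{st}$, expand to the linear functional $\langle A_1-A_2,Q\rangle$ on $\Gr(m_s,m_s+m_t)$, and invoke Lemma~\ref{lem:gradient estimate} once for the SVD minimizer and once for the gradient bound together with $\lVert A_1-A_2\rVert\le\lVert\tau_s(\mathbb{U}_s)-\tau_t(\mathbb{U}_t)\rVert$. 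The paper's proof is equally terse about the constant matching you flag in your last paragraph, so your level of rigor here matches the original.
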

\begin{proof}
Given $1 \le s < t \le d+1$, the sub-step in \eqref{eq:nearest point on flag} is 
\begin{align*}
\min \quad & \lVert \tau_s(\mathbb{U}_s) - \tau_s(\mathbb{W}_s)   \rVert_F^2 + \Vert \tau_t(\mathbb{U}_t) - \tau_t(\mathbb{W}_t)   \rVert_F^2 \nonumber \\
\text{s.t.} \quad & (\mathbb{W}_s,\mathbb{W}_t) \in \Gr(m_s, m_s + m_t) \times \Gr(m_t, m_s + m_t)  \\
\quad & \mathbb{W}_s \perp \mathbb{W}_t  \nonumber
\end{align*}
In particular, $\mathbb{W}_s \oplus \mathbb{W}_t =\left( \bigoplus_{j\ne s,t} \mathbb{W}_j \right)^\perp$ is a fixed $(m_s + m_t)$-dimensional vector space represented by $V_{s, t} := [V_s, V_t]$. We construct the matrix $V^\perp$ whose columns form an orthonormal basis of $\left( \bigoplus_{j\ne s,t} \mathbb{W}_j \right)^\perp$. The choice of $\mathbb{W}_s, \mathbb{W}_t$ can be further specified by an orthogonal matrix $W \in \O(m_s + m_t)$ so that $V_{s, t} W = [W_s, W_t]$ where $W_s, W_t$ span $\mathbb{W}_s, \mathbb{W}_t$ respectively. As a result, the images of $\mathbb{W}_s, \mathbb{W}_t$ can be written as
\[
\tau_s(\mathbb{W}_s) = V_{s, t} W \I_{m_s,m_t} W^\tp V_{s, t}^\tp + V^\perp (V^\perp)^\tp,\quad  \tau_t(\mathbb{W}_t) = -V_{s, t} W \I_{m_s,m_t} W^\tp V_{s, t}^\tp + V^\perp (V^\perp)^\tp.
\]
\eqref{eq:alternating nearest point on flag} follows easily by taking $A_1 = V_{s, t}^\tp \tau_s(\mathbb{U}_s) V_{s, t}$, $A_2 = V_{s, t}^\tp \tau_t(\mathbb{U}_t)V_{s, t}$. 

Next we observe that the objective function in \eqref{eq:alternating nearest point on flag} can further be re-written as 
\begin{align*}
& \lVert A_1 \rVert_F^2 + \lVert A_2 \rVert_F^2 + 2(m_s + m_t) - 2 \langle A_1, W \I_{m_s,m_t} W^\tp \rangle + 2 \langle A_2, W \I_{m_s,m_t} W^\tp \rangle \\
=& \lVert A_1 \rVert_F^2 + \lVert A_2 \rVert_F^2 + 2(m_s + m_t) + 2 \langle A_2 - A_1 , W\I_{m_s,m_t} W^\tp \rangle. 
\end{align*}
Therefore, the problem \eqref{eq:alternating nearest point on flag} is equivalent to 
\begin{align}\label{eq:substep alternating nearest point on flag}
\min \quad & \langle A_2 - A_1 , W\I_{m_s, m_t} W^\tp \rangle  \\
\text{s.t.} \quad & W\in \O(m_s + m_t) \nonumber
\end{align}
By Lemma~\ref{lem:gradient estimate}, we may conclude that a solution to \eqref{eq:substep alternating nearest point on flag} is $W_\ast$, which can be obtained by the SVD of $A_1 - A_2$. Furthermore, we have 
\begin{align*}
\|\nabla_{s, t} F\|^2 \leq \|A_2-A_1\| |\Delta_{s,t}| \leq \|\tau_s(\mathbb{U}_s)-\tau_t(\mathbb{U}_t)\| |\Delta_{s,t}|.
\end{align*}
\end{proof}

\begin{theorem}
Consider a randomized version of Algorithm~\ref{alternating} for problem~\eqref{eq:nearest point on flag}. At each step, choose $(s_i, t_i)$ uniformly from all possible $(s, t)$. Let $\mathbb{W}_i$ be the point at step $i$. Then with probability 1, every cluster point of $\mathbb{W}_i$ is a stationary point. (Because flag manifold is compact, cluster point exists.)
\end{theorem}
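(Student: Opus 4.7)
The plan is to combine the per-step descent estimate from the preceding proposition with a conditional Borel--Cantelli argument. First I note that because each subproblem is solved to global optimality via SVD, the sequence $F(\mathbb{W}_i)$ is monotonically non-increasing; continuity of $F$ on the compact manifold $\Flag(n_1,\dots,n_d;n)$ then yields a finite limit $F^\ast = \lim_i F(\mathbb{W}_i)$, so the per-step decreases $\Delta_i \coloneqq F(\mathbb{W}_{i-1}) - F(\mathbb{W}_i) \ge 0$ are summable: $\sum_i \Delta_i < \infty$.

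The heart of the argument will be to show that the iterates avoid any fixed ``large block-gradient'' region eventually. For a fixed pair $1\le s<t\le d+1$ and a rational $\eta>0$, I consider
\[
V_{s,t,\eta} \coloneqq \bigl\{\mathbb{W} \in \Flag(n_1,\dots,n_d;n) : \|\nabla_{s,t} F(\mathbb{W})\| \ge \eta \bigr\},
\]
which is closed by continuity of $\nabla F$. Let $\mathcal{F}_i$ denote the $\sigma$-algebra generated by $(\mathbb{W}_0, (s_j,t_j)_{j \le i})$, and let $A_i \coloneqq \{\mathbb{W}_{i-1} \in V_{s,t,\eta}\} \cap \{(s_i,t_i) = (s,t)\}$. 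Independence of the uniform draw $(s_i,t_i)$ from $\mathcal{F}_{i-1}$ gives
\[
\mathbb{P}(A_i \mid \mathcal{F}_{i-1}) = \tfrac{2}{d(d+1)}\, \mathbf{1}_{\{\mathbb{W}_{i-1} \in V_{s,t,\eta}\}}.
\]
On the event $B \coloneqq \{\mathbb{W}_{i-1} \in V_{s,t,\eta} \text{ i.o.}\}$, the conditional sum $\sum_i \mathbb{P}(A_i \mid \mathcal{F}_{i-1})$ diverges, so L\'evy's conditional Borel--Cantelli lemma forces $A_i$ to occur infinitely often on $B$ almost surely. But on each $A_i$ the preceding proposition gives $\Delta_i \ge \eta^2 / C$ with $C \coloneqq \max_{s<t}\|\tau_s(\mathbb{U}_s) - \tau_t(\mathbb{U}_t)\|$, which contradicts summability of $\Delta_i$; hence $\mathbb{P}(B) = 0$.

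Finally, taking the countable union over all pairs $(s,t)$ and rationals $\eta > 0$, with probability one the iterates visit every $V_{s,t,\eta}$ only finitely often. If $\mathbb{W}^\ast$ were a cluster point with $\nabla F(\mathbb{W}^\ast) \ne 0$, one could pick $(s,t)$ and a rational $\eta_0$ with $\|\nabla_{s,t} F(\mathbb{W}^\ast)\| > \eta_0$; continuity would produce a neighborhood of $\mathbb{W}^\ast$ contained in $V_{s,t,\eta_0/2}$ that any subsequence converging to $\mathbb{W}^\ast$ must enter infinitely often --- a contradiction. The main obstacle will be the probabilistic step: naive Borel--Cantelli fails because the events $A_i$ are not independent, so one must carefully use the conditional form with filtration $\mathcal{F}_{i-1}$ and exploit that the uniform sampling rule makes $(s_i,t_i)$ independent of the past $\mathcal{F}_{i-1}$; everything else (monotonicity, summability, continuity of $\nabla F$) is soft and immediate.
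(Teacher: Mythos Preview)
Your argument is correct, but it takes a different route from the paper's. The paper bounds the \emph{conditional expected} one-step decrease directly by a constant times the full gradient norm: at each step, with probability $2/(d(d+1))$ the sampled pair equals the one maximizing $\|\nabla_{s,t}F(\mathbb{W}_i)\|$, and on that event the proposition yields $F(\mathbb{W}_i)-F(\mathbb{W}_{i+1})\ge C\|\nabla F(\mathbb{W}_i)\|^2$; taking conditional expectation, summing, and then taking expectation gives $\mathbb{E}\sum_i\|\nabla F(\mathbb{W}_i)\|^2<\infty$, hence $\|\nabla F(\mathbb{W}_i)\|\to0$ almost surely. This is the standard telescoping argument for randomized block coordinate descent and avoids any Borel--Cantelli machinery. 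Your route instead leverages the \emph{deterministic} summability of $\Delta_i$ (from monotonicity plus compactness) and invokes L\'evy's conditional Borel--Cantelli to rule out each ``large block-gradient'' set being visited infinitely often. Both approaches ultimately prove $\|\nabla F(\mathbb{W}_i)\|\to 0$ a.s.; the paper's is shorter and more in line with the optimization literature, while yours is a clean probabilistic alternative that makes the role of the uniform sampling very explicit. One small omission: when $C=\max_{s<t}\|\tau_s(\mathbb{U}_s)-\tau_t(\mathbb{U}_t)\|=0$, your bound $\Delta_i\ge\eta^2/C$ is ill-defined; the paper disposes of this degenerate case up front by observing that then every block gradient vanishes identically and the claim is trivial. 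You should add that one-line caveat.
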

\begin{proof}
If $\|\tau_s(\mathbb{U}_s)-\tau_t(\mathbb{U}_t)\| = 0$ for all $s, t$, then the function is trivial and there is nothing to prove. Otherwise, there is a set $A \subseteq \{(s, t) \mid 1\leq s < t \leq d+1\}$ such that $\|\tau_s(\mathbb{U}_s)-\tau_t(\mathbb{U}_t)\| \neq 0$ if and only if $(s, t) \in A$. At each step $i$, assume $\argmax_{(s, t) \in A} \|\nabla_{s, t} F(\mathbb{W}_i)\|$ is achieved for $(s^*, t^*)$. If $(s_i, t_i) = (s^*, t^*)$, then
\begin{align*}
F(\mathbb{W}_i)-F(\mathbb{W}_{i+1}) &\geq \frac{\|\nabla_{s^*, t^*} F(\mathbb{W}_i)\|^2}{\|\tau_{s^*}(\mathbb{U}_{s^*})-\tau_{t^*}(\mathbb{U}_{t^*})\|} \\
&\geq \frac{\max \|\nabla_{s, t} F(\mathbb{W}_i)\|^2}{\max \|\tau_{s}(\mathbb{U}_{s})-\tau_{t}(\mathbb{U}_{t})\|}\\
&\geq C \|\nabla F(\mathbb{W}_i)\|^2,
\end{align*}
where $C$ is a constant independent of $\mathbb{W}_i$. If $(s_i, t_i) \neq (s^*, t^*)$, at least we have $F(\mathbb{W}_i)-F(\mathbb{W}_{i+1}) \geq 0$. So
\[
\mathbb{E} F(\mathbb{W}_i)- \mathbb{E} F(\mathbb{W}_{i+1}) \geq \frac{2C}{n(n-1)} \|\nabla F(\mathbb{W}_i)\|^2.
\]
Summing from $i=0$ to $\infty$, and take expectation, we have
\[
\mathbb{E} [ F(\mathbb{W}_0)-\lim_{i \to \infty} F(\mathbb{W}_{i}) ] \geq C' \mathbb{E} \sum_{i=0}^{\infty} \|\nabla F(\mathbb{W}_i)\|^2.
\]
So with probability 1, $\sum_{i=0}^{\infty} \|\nabla F(\mathbb{W}_i)\|^2$ exists and $\|\nabla F(\mathbb{W}_i)\|$ converges to 0. Any cluster point must be a stationary point.
\end{proof}

\section{Numerical experiments}
In this section, we consider the function
\[
f(V) = \sum_{k=1}^{d} \tr(V_k^\tp A_k V_k),
\]
where $A_i$ is randomly generated symmetric matrix, $V_i$ is the submatrix of $V$ with index $1\leq i\leq n, n_{k-1} < j \leq n_k$, i.e., the basis of $\mathbb{W}_k$. This function is clearly a function on the flag manifold $\Flag(n_1,\dots, n_d;n)$.

We choose $\Flag(5, 5; 200)$ and test five methods: (i) gradient descent method under classical embedding metric; (ii) gradient descent method under modified embedding metric; (iii) coordinate gradient descent method under modified embedding metric; (iv) gradient descent method using the quotient model proposed in Algorithm 1 in \cite{YWL19}; (v) coordinate minimization method under modified metric (Algorithm 1). Figure 1 shows the convergence rate averaged over 10 simulations. We also record the running time to hit $\|\nabla f(V)\| \leq 10^{-5}$, averaged over 10 simulations, as shown in Table 1.

Method (ii) is equivalent to (iv), and their convergence rate and running time are similar. All four descent method has comparable performance while the coordinate minimization outperforms them significantly. 
\begin{table}[]
\centering
\begin{tabular}{l|l}
(i) Classic Descent & 21.428s \\ 
\hline
(ii) Modified Descent & 11.291s \\
\hline
(iii) Coordinate Descent & 25.548s \\
\hline
(iv) Quotient Descent & 10.238s \\ 
\hline
(v) Coordinate Minimization & 0.552s 
\end{tabular}
\caption{Running time to hit $\|\nabla f(V)\| \leq 10^{-5}$ of different methods.}
\label{table1}
\end{table}

\begin{figure}[ht]
\centering
\includegraphics[width=0.75\textwidth]{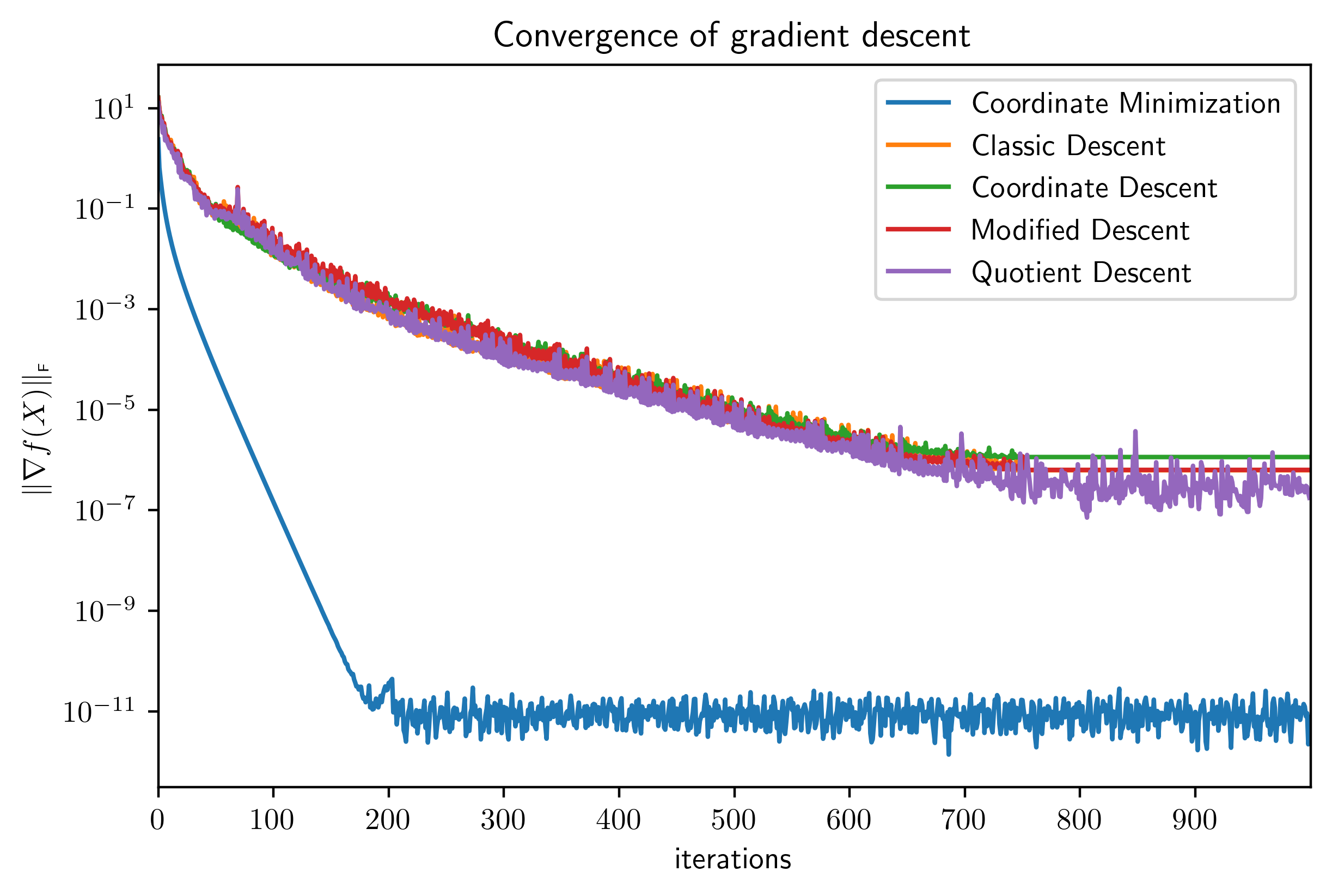}
\caption{Convergence behavior of different methods.}
\label{fig:plot1}
\end{figure}

The coordiante minimization method works best for this special choice of $f(V)$ because the optimization sub-problem has explicit solution and can be solved sufficiently. For more general problems, it might not be the case. However, This special choice of $f$ covers many common problems appeared in flag manifolds optimization. Most notably, it is equivalent to the projection problem under modified embedding \ref{eq:nearest point on flag}. As a result, the extrinsic sample mean problem \cite{BP} for flag manifold under modified embedding can be solved efficiently.


\appendix
\section{Parallel transport with respect to the classical embedding} 
Let $c(t)$ be a curve on $\Flag(n_1,\dots, n_d;n)$ parametrized as
\[
c(t) = V(t) (J_{1},\dots, J_{d}) V(t)^\tp.
\] 
Here $V(t)$ is a curve in $\O(n)$ and hence $\dot{V}(t) = V(t) \Lambda(t)$ for some $\Lambda(t) \in \mathfrak{so}(n)$. If we partition $\Lambda(t)$ as $\Lambda(t) = (\Lambda(j,k))_{j,k=1}^{d+1,d+1}$ with respect to $n = m_1 + \cdots + m_{d+1}$, then $\Lambda(k,k)(t) \equiv 0, k =1,\dots, d+1$.

We notice that by Proposition~\ref{prop:tangentFlag} a vector field $Y(t)$ on $\Flag(n_1,\dots, n_d;n)$ along the curve $c(t)$ can be parametrize as 
\begin{equation}\label{eq:vectorfield}
Y(t) =   V(t) (X(t) J_1 - J_1 X(t),\dots, X(t) J_d - J_d X(t) ) V(t)^\tp,
\end{equation}
where $X(t) = (X(j,k))_{j,k=1}^{d+1,d+1}\in \mathfrak{so}(n)$ is the partition of $X(t)$ with respect to $n = m_1 + \cdots + m_{d+1}$ and $X(k,k)(t) \equiv 0, k =1,\dots, d+1$. We recall that $Y(t)$ is the parallel transport of $Y(0)$ along $c(t)$ if and only if 
\[
\proj^{\mathbb{T}}_{c(t)} (\dot{Y}(t)) = 0.
\]

By differentiating \eqref{eq:vectorfield}, we obtain 
\[
\dot{Y}(t) =  V(t) (\Delta_1(t),\dots, \Delta_d(t)) V^\tp(t),
\]
where 
\begin{align*}
\Delta_k (t) &= (\dot{X}(t) J_k - J_k \dot{X}(t) )  + {\Lambda}(t) (X(t) J_k - J_k X(t)) - (X(t) J_k - J_k X(t)) {\Lambda}(t)  \\
&= (\dot{X}(t) J_k - J_k \dot{X}(t)) + ( {\Lambda}(t) X(t) J_k +  J_k X(t) {\Lambda}(t) ) - ({\Lambda}(t) J_k X(t) + X(t) J_k {\Lambda}(t)).
\end{align*}
Similar to what we have done in Subsection~\ref{subsec:geodesic}, we set
\begin{align}
T_1(t) &= V(t)(\dot{X}(t) J_1 - J_1 \dot{X}(t),\dots, \dot{X}(t) J_d - J_d \dot{X}(t)) V(t)^\tp, \\
T_2(t) &= V(t) ( {\Lambda}(t) X(t) J_1 +  J_1 X(t) {\Lambda}(t),\dots, {\Lambda}(t) X(t) J_d +  J_d X(t) {\Lambda}(t)) V^\tp(t),\\
T_3(t) &= V(t)( {\Lambda}(t) J_1 X(t) + X(t) J_1 {\Lambda}(t) ,\dots,  {\Lambda}(t) J_d X(t) + X(t) J_d {\Lambda}(t)) V^\tp(t),
\end{align}
thus $\dot{Y}(t) =T_1(t) + T_2(t) - T_3(t)$. By definition, we conclude that $T_1(t) \in T_{c(t)}\Flag(n_1,\dots, n_d;n)$ and hence to determine $\proj^{\mathbb{T}}_{c(t)} (\dot{Y}(t))$, it is sufficient to compute $\proj^{\mathbb{T}}_{c(t)} (T_2(t))$ and $\proj^{\mathbb{T}}_{c(t)} (T_3(t))$ respectively.

\begin{lemma}\label{lem:ptT2}
There exists some symmetric matrices $W_1,\dots, W_d$ where 
\[
W_k(p,q) = \begin{cases}
\sum_{s=1}^{d+1}  X(k,s) {\Lambda}(s,q) - \sum_{s=1}^{d+1}  {\Lambda}(s,k)^\tp X(q,s)^\tp,~\text{if}~p=k,q\ne k, \\ 
\sum_{s=1}^{d+1}  {\Lambda}(s,p)^\tp X(k,s)^\tp  - \sum_{s=1}^{d+1}  X(p,s) {\Lambda}(s,k) ,~\text{if}~q=k,p\ne k, \\ 
0,~\text{otherwise}
\end{cases}
\]
such that $\proj^{\mathbb{T}}_{c(t)} (T_2(t)) = V(t) (W_1(t),\dots, W_d(t)) V^\tp(t)$.
\end{lemma}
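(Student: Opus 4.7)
The plan is to exhibit $W := V(t)(W_1, \ldots, W_d) V(t)^\tp$ with $W_k$ as in the statement and then to characterize it as $\proj^{\mathbb{T}}_{c(t)}(T_2(t))$ by (a) verifying $W \in \mathbb{T}_{c(t)} \Flag(n_1,\dots, n_d;n)$ and (b) verifying that $T_2(t) - W$ is Frobenius-orthogonal to $\mathbb{T}_{c(t)} \Flag(n_1,\dots, n_d;n)$. The orthogonal projection is then uniquely determined by these two properties.

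First I would set $H_k := {\Lambda}(t) X(t) J_k + J_k X(t) {\Lambda}(t)$, so that $T_2(t) = V(t)(H_1,\dots,H_d) V(t)^\tp$. A one-line transpose computation using ${\Lambda}^\tp = -{\Lambda}$, $X^\tp = -X$, and $J_k^\tp = J_k$ shows that each $H_k$ is symmetric. Using the block structure $J_k(r, s) = (2\delta_{kr} - 1)\I_{m_r}\delta_{rs}$, a direct expansion gives
\[
H_k(p, q) = (2\delta_{kq} - 1) ({\Lambda} X)(p, q) + (2\delta_{kp} - 1)(X {\Lambda})(p, q).
\]
Specializing to $p = k$, $q \ne k$ collapses this to $H_k(k, q) = (X{\Lambda})(k, q) - ({\Lambda} X)(k, q) = \sum_s X(k, s) {\Lambda}(s, q) - \sum_s {\Lambda}(k, s) X(s, q)$, and rewriting ${\Lambda}(k, s) X(s, q) = {\Lambda}(s, k)^\tp X(q, s)^\tp$ via skew-symmetry yields exactly the stated formula for $W_k(k, q)$; the formula for $W_k(p, k)$ with $p \ne k$ follows from the symmetry $H_k(p, k) = H_k(k, p)^\tp$.

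Next I would define $W_k$ to be the symmetric matrix whose $(k, q)$-block equals $H_k(k, q)$ for $q \ne k$, whose $(p, k)$-block equals $H_k(p, k)$ for $p \ne k$, and whose remaining blocks (including $(k, k)$) all vanish. The block-structural tangency conditions of Proposition~\ref{prop:tangentFlag} then hold by construction, so the only nontrivial tangency check is the cross-slot constraint $W_k(k, l) + W_l(k, l) = 0$ for $1 \le k, l \le d$ with $k \ne l$. Applying the blockwise formula for $H_l$ at $(k, l)$ flips both $J$-signs relative to $H_k(k, l)$, giving $H_l(k, l) = ({\Lambda} X)(k, l) - (X{\Lambda})(k, l) = -H_k(k, l)$, so the constraint is automatic and $W \in \mathbb{T}_{c(t)} \Flag(n_1,\dots, n_d;n)$.

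Finally, $H_k - W_k$ is supported on the block $(k, k)$ and on $(p, q)$ with $p, q \ne k$, while each slot $Y_k$ of an arbitrary $Y \in \mathbb{T}_{c(t)} \Flag(n_1,\dots, n_d;n)$ is supported on $\{(k, l), (l, k) : l \ne k\}$ with $Y_k(k, k) = 0$, by Proposition~\ref{prop:tangentFlag}. These block supports are disjoint, so $\tr((H_k - W_k) Y_k) = 0$ termwise; summing over $k$ shows that $T_2(t) - W$ is Frobenius-orthogonal to $\mathbb{T}_{c(t)} \Flag(n_1,\dots, n_d;n)$, which identifies $W = \proj^{\mathbb{T}}_{c(t)}(T_2(t))$. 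The only mildly delicate step is the cross-slot cancellation, which reflects the sign flip of $J_k(k,k)J_k(l,l)$ versus $J_l(k,k)J_l(l,l)$ when one swaps $k$ and $l$.
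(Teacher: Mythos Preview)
Your proof is correct and follows essentially the same approach as the paper: both compute the blockwise formula for $H_k = \Lambda X J_k + J_k X \Lambda$, identify $H_k(k,q)$ for $q\ne k$ with the claimed $W_k(k,q)$, and exploit the sign flip $H_l(k,l) = -H_k(k,l)$ for $1\le k\ne l\le d$. The only difference is presentational: the paper stops after establishing that antisymmetry and reads off the result (implicitly via the projection formula of Proposition~\ref{prop:projection}), whereas you explicitly verify both defining properties of the orthogonal projection---tangency via the cross-slot constraint and orthogonality of the residual via a block-support argument---which is arguably cleaner since $T_2(t)$ is not a priori a tangent vector to $\prod_k \O(n)$.
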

\begin{proof}
For each $k=1,\dots, d$, we first compute ${\Lambda}(t) X(t) J_k +  J_k X(t) {\Lambda}(t)$. Indeed, since $X(t), \Lambda(t) \in \mathfrak{so}(n)$ and $J_k$ is a diagonal matrix, we have 
\[
J_k X(t) {\Lambda}(t) = \left( {\Lambda}(t) X(t) J_k \right)^\tp.
\]
Therefore we only need to compute ${\Lambda}(t) X(t) J_k $. To do so, we partition $\Lambda(t)$ and $X(t)$ as 
\begin{equation}\label{lem:ptT2:eq1}
\Lambda(t) = (\Lambda(p,q)),\quad X(t) = (X(p,q)),\quad 1\le p,q \le d+1.
\end{equation}
The $(p,q)$-th entry of $J_k X(t) {\Lambda}(t)$ is 
\begin{align*}
\sum_{l,s=1}^{d+1} J_k(p,l) X(l,s) {\Lambda}(s,q) &= \sum_{s=1}^{d+1} J_k (p,p) X(p,s) {\Lambda}(s,q) \\
&= (2\delta_{pk} - 1) \sum_{s=1}^{d+1}  X(p,s) {\Lambda}(s,q).
\end{align*}
Hence the $(p,q)$-th block of ${\Lambda}(t) X(t) J_k +  J_k X(t) {\Lambda}(t)$ is simply 
\begin{align*}
(2\delta_{pk} - 1) \sum_{s=1}^{d+1}  X(p,s) {\Lambda}(s,q) + (2\delta_{qk} - 1) \sum_{s=1}^{d+1}  {\Lambda}(s,p)^\tp X(q,s)^\tp. 
\end{align*}
In particular, for $q \ne k$, the $(k,q)$-th block of ${\Lambda}(t) X(t) J_k +  J_k X(t) {\Lambda}(t)$ is
\begin{align*}
\sum_{s=1}^{d+1}  X(k,s) {\Lambda}(s,q) - \sum_{s=1}^{d+1}  {\Lambda}(s,k)^\tp X(q,s)^\tp
\end{align*}
and if moreover that $q\le d$, then the $(k,q)$-th block of ${\Lambda}(t) X(t) J_q +  J_q X(t) {\Lambda}(t)$ is 
\begin{align*}
- \left( \sum_{s=1}^{d+1}  X(k,s) {\Lambda}(s,q) - \sum_{s=1}^{d+1}  {\Lambda}(s,k)^\tp X(q,s)^\tp \right). 
\end{align*}
This implies that 
\[
\proj^{\mathbb{T}}_{c(t)} (T_2(t)) = V(t) (W_1(t),\dots, W_d(t)) V^\tp(t), 
\]
where 
\[
W_k(p,q) = \begin{cases}
\sum_{s=1}^{d+1}  X(k,s) {\Lambda}(s,q) - \sum_{s=1}^{d+1}  {\Lambda}(s,k)^\tp X(q,s)^\tp,~\text{if}~p=k,q\ne k, \\ 
\sum_{s=1}^{d+1}  {\Lambda}(s,p)^\tp X(k,s)^\tp  - \sum_{s=1}^{d+1}  X(p,s) {\Lambda}(s,k) ,~\text{if}~q=k,p\ne k, \\ 
0,~\text{otherwise}.
\end{cases}
\]
\end{proof}

\begin{lemma}\label{lem:ptT3}
There exists symmetric matrices $Z_1,\dots, Z_d$ where
\[
Z_k(p,q) = \begin{cases}
-\sum_{1 \le l \le d, l\ne k} \left( 
 X(k,l)  {\Lambda}(l,d+1)
+ {\Lambda}(l,k)^\tp X(d+1,l)^\tp 
\right),~\text{if}~p=k,q = d + 1, \\
-\sum_{1 \le l \le d, l\ne k} \left( 
 X(d+1,l)  {\Lambda}(l,k)
+ {\Lambda}(l,d+1)^\tp X(k,l)^\tp 
\right),~\text{if}~p=d+1,q=k, \\
0,~\text{otherwise}.
\end{cases}
\]
such that $\proj^{\mathbb{T}}_{c(t)} (T_3(t)) = V(t) (Z_1(t),\dots, Z_d(t)) V^\tp(t)$.
\end{lemma}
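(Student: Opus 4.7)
The plan is to mirror the proofs of Lemma~\ref{lemma:projectionT3} (the geodesic analogue) and Lemma~\ref{lem:ptT2}: compute block-wise the $k$-th component $M_k := \Lambda(t) J_k X(t) + X(t) J_k \Lambda(t)$ of $T_3(t)$, and then invoke the tangent-projection formulas of Proposition~\ref{prop:projection}. Using the block-diagonal structure $J_k(l,l) = (2\delta_{kl}-1)\I_{m_l}$ and partitioning with respect to $n = m_1 + \cdots + m_{d+1}$, a direct expansion gives
\[
M_k(p,q) = \sum_{l=1}^{d+1}(2\delta_{kl}-1)\bigl[\Lambda(p,l)\, X(l,q) + X(p,l)\,\Lambda(l,q)\bigr].
\]

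For the $(k,q)$-block with $q \le d$, $q \ne k$, the $l = k$ summand is killed by $\Lambda(k,k) = X(k,k) = 0$ and every other coefficient equals $-1$, so $M_k(k,q) = -(\Lambda X + X\Lambda)(k,q)$. Performing the identical computation on $M_q(k,q)$ kills the $l = q$ summand instead (via $\Lambda(q,q) = X(q,q) = 0$), again yielding $-(\Lambda X + X\Lambda)(k,q)$. Hence $M_k(k,q) = M_q(k,q)$, so the averaging formula $X_k(k,q) = \tfrac12(M_k(k,q) - M_q(k,q))$ of Proposition~\ref{prop:projection} forces this block of the projection to vanish for every such $q$. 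The transposed $(q,k)$ block vanishes by the same token, and every block $(p,q)$ with $p,q \ne k$ is already zero by the tangent-space characterization of Proposition~\ref{prop:tangentFlag}.

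The only surviving blocks are therefore $(k,d+1)$ and $(d+1,k)$, for which Proposition~\ref{prop:projection} uses no averaging. Killing the $l = k$ and $l = d+1$ summands (using $\Lambda(k,k) = X(k,k) = \Lambda(d+1,d+1) = X(d+1,d+1) = 0$),
\[
M_k(k,d+1) = -\sum_{1 \le l \ne k \le d}\bigl[\Lambda(k,l)\, X(l,d+1) + X(k,l)\,\Lambda(l,d+1)\bigr].
\]
The skew-symmetry identities $\Lambda(k,l) = -\Lambda(l,k)^\tp$ and $X(l,d+1) = -X(d+1,l)^\tp$ convert $\Lambda(k,l)\, X(l,d+1)$ into $\Lambda(l,k)^\tp X(d+1,l)^\tp$, reproducing the claimed expression for $Z_k(k,d+1)$; the $(d+1,k)$-block follows from the symmetry $Z_k = Z_k^\tp$ together with the same substitutions. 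The only real obstacle is careful bookkeeping through the Kronecker-delta signs and the block transpositions; no new geometric idea beyond the two earlier lemmas is required.
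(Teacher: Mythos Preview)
Your argument is correct and follows essentially the same route as the paper's own proof: you expand $M_k = \Lambda J_k X + X J_k \Lambda$ block-wise using $J_k(l,l)=(2\delta_{kl}-1)\I_{m_l}$, observe that for $1\le q\le d$, $q\neq k$, both $M_k(k,q)$ and $M_q(k,q)$ reduce to the same expression $-(\Lambda X + X\Lambda)(k,q)$ once the vanishing diagonal blocks of $\Lambda$ and $X$ kill the distinguished summands, and then read off the surviving $(k,d+1)$ block directly. The paper writes the common value as $-\sum_{l\ne k,q}\bigl(X(k,l)\Lambda(l,q)+\Lambda(l,k)^\tp X(q,l)^\tp\bigr)$ rather than your more compact $-(\Lambda X + X\Lambda)(k,q)$, but the two are identical after using $\Lambda(p,q)^\tp=-\Lambda(q,p)$, $X(p,q)^\tp=-X(q,p)$.
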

\begin{proof}
We compute ${\Lambda}(t) J_k X(t) + X(t) J_k {\Lambda}(t)$ for each $k=1,\dots, d$. We partition $\Lambda(t)$ and $X(t)$ as in \eqref{lem:ptT2:eq1} respectively. We also notice that 
\[
X(t) J_k {\Lambda}(t) = \left( {\Lambda}(t) J_k X(t) \right)^\tp
\]
so that it is sufficient to compute $X(t) J_k {\Lambda}(t)$. The $(p,q)$-th block of $X(t) J_k {\Lambda}(t)$ is 
\begin{align*}
\sum_{l,s=1}^{d+1} X(p,l) J_k(l,s) {\Lambda}(s,q) &=\sum_{l=1}^{d+1} X(p,l) J_k(l,l) {\Lambda}(l,q) \\
&=\sum_{l=1}^{d+1} (2\delta_{kl} - 1) X(p,l)  {\Lambda}(l,q)
\end{align*}
Hence the $(p,q)$-th block of ${\Lambda}(t) J_k X(t) + X(t) J_k {\Lambda}(t)$ is 
\[
\sum_{1 \le l \le d+1, l \ne p,q} (2\delta_{kl} - 1)\left( 
 X(p,l)  {\Lambda}(l,q)
+ {\Lambda}(l,p)^\tp X(q,l)^\tp 
\right).
\]
In particular, for $q\ne k$, the $(k,q)$-th block of ${\Lambda}(t) J_k X(t) + X(t) J_k {\Lambda}(t)$ is 
\[
-\sum_{1 \le l \le d+1, l\ne k,q} \left( 
 X(k,l)  {\Lambda}(l,q)
+ {\Lambda}(l,k)^\tp X(q,l)^\tp 
\right)
\]
which is the same as the $(k,q)$-th block of ${\Lambda}(t) J_q X(t) + X(t) J_q {\Lambda}(t)$ if moreover $q \le d$. Hence we have 
\[
\proj^{\mathbb{T}}_{c(t)} (T_3(t)) = V(t) (Z_1(t),\dots, Z_d(t)) V(t)^\tp,
\]
where for each $1\le k \le d$, 
\[
Z_k(p,q) = \begin{cases}
-\sum_{1 \le l \le d, l\ne k} \left( 
 X(k,l)  {\Lambda}(l,d+1)
+ {\Lambda}(l,k)^\tp X(d+1,l)^\tp 
\right),~\text{if}~p=k,q = d + 1, \\
-\sum_{1 \le l \le d, l\ne k} \left( 
 X(d+1,l)  {\Lambda}(l,k)
+ {\Lambda}(l,d+1)^\tp X(k,l)^\tp 
\right),~\text{if}~p=d+1,q=k, \\
0,~\text{otherwise}.
\end{cases}
\]
\end{proof}

\begin{proposition}[parallel transport along any curve]\label{prop:ptflag}
Let $c(t) = V(t) (J_1,\dots, J_d) V(t)^\tp$ be a curve on $\Flag(n_1,\dots,n_d;n)$ and let $Y(t)$ be a vector field along the curve $c(t)$, parametrized as in \eqref{eq:vectorfield}. Then $Y(t)$ is a parallel transport if and only for each pair $(k,q)$ such that $1 \le k < q \le d+1$, we have  
\footnotesize{
\begin{equation}\label{prop:ptflag:eq1}
-2 \dot{X}(k,q) + \sum_{\substack{1\le s \le d+1 \\ s\ne k,q}} \left( X(k,s) {\Lambda}(s,q) - {\Lambda}(k,s) X(s,q) \right) + \delta_{q,d+1} \sum_{\substack{1 \le l \le d \\ l\ne k}} \left( 
 X(k,l)  {\Lambda}(l,d+1)
+ {\Lambda}(k,l) X(l,d+1) 
\right) = 0,
\end{equation}}
\normalsize
which can be written in a more compact form:
\begin{equation}\label{prop:ptflag:eq2}
2 \dot{X} =\pi\left( [X, {\Lambda}]\right) + \begin{bmatrix}
0 & X_0 {\Lambda}_1 + {\Lambda}_0 X_1 \\
-(X_0 {\Lambda}_1 + {\Lambda}_0 X_1)^\tp & 0
\end{bmatrix} ,
\end{equation}
where $X_0,\Lambda_0\in \mathbb{R}^{(n-n_{d+1})\times (n-n_{d+1})} ,X_1,\Lambda_1\in \mathbb{R}^{(n-n_{d+1})\times n_{d+1}}$ are submatrices determined by partitions
\[
X = \begin{bmatrix}
X_0 & X_1 \\ 
-X_1^\tp & 0 
\end{bmatrix},\quad \Lambda = \begin{bmatrix}
\Lambda_0 & \Lambda_1 \\ 
-\Lambda_1^\tp & 0 
\end{bmatrix},
\]
and $\pi(A)$ is defined by setting all diagonal blocks of $A\in \mathfrak{so}(n)$ equal to zero. 
\end{proposition}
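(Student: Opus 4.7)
The plan is to combine the decomposition $\dot{Y}(t) = T_1(t) + T_2(t) - T_3(t)$ established prior to the statement with the tangent--projection formulas of Lemmas~\ref{lem:ptT2} and~\ref{lem:ptT3}, and then read off the parallel-transport condition block by block. By definition, $Y(t)$ is parallel along $c(t)$ if and only if $\proj_{c(t)}^{\mathbb{T}}(\dot{Y}(t)) = 0$. Since $T_1(t) \in \mathbb{T}_{c(t)}\Flag(n_1,\dots,n_d;n)$ by its very form, and the tangent projections of $T_2(t)$ and $T_3(t)$ are $V(t)(W_1,\dots,W_d)V(t)^\tp$ and $V(t)(Z_1,\dots,Z_d)V(t)^\tp$ respectively, the condition reduces to $T_1(t) + V(t)(W_k - Z_k)_{k=1}^d V(t)^\tp = 0$.

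First I would use Proposition~\ref{prop:tangentFlag} to parametrize tangent vectors by their off-diagonal blocks. A short computation using the fact that $J_k$ is diagonal and $J_k(p,p)=(2\delta_{pk}-1)\I_{m_p}$ shows that the $(k,q)$-block of the $k$-th component of $V(t)(\dot{X}J_1 - J_1\dot{X},\dots,\dot{X}J_d - J_d\dot{X})V(t)^\tp$ equals $-2\dot{X}(k,q)$ for $q\neq k$. This accounts for the ``$-2\dot{X}(k,q)$'' term in \eqref{prop:ptflag:eq1}.

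Next, for each pair $(k,q)$ with $1 \le k \neq q \le d+1$, I would set the $(k,q)$-block of the $k$-th component of the sum equal to zero, substituting the explicit formulas for $W_k(k,q)$ and $Z_k(k,q)$ from the two lemmas. The skew-symmetry identities $\Lambda(s,k)^\tp = -\Lambda(k,s)$ and $X(q,s)^\tp = -X(s,q)$, both arising from $X,\Lambda \in \mathfrak{so}(n)$, convert the $W_k$-contribution into $\sum_{s \neq k,q}\bigl[X(k,s)\Lambda(s,q) - \Lambda(k,s)X(s,q)\bigr]$, while the $Z_k$-contribution is nonzero only when $q = d+1$ and equals $-\sum_{l \le d,\,l\neq k}\bigl[X(k,l)\Lambda(l,d+1) + \Lambda(k,l)X(l,d+1)\bigr]$. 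Subtracting one from the other yields \eqref{prop:ptflag:eq1} verbatim, with the Kronecker factor $\delta_{q,d+1}$ recording which case we are in.

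Finally, to reach the compact form \eqref{prop:ptflag:eq2}, I would observe that the $(k,q)$-sum above is precisely the $(k,q)$-block of $[X,\Lambda] = X\Lambda - \Lambda X$, since the omitted indices $s=k,q$ contribute zero by $X(k,k) = \Lambda(k,k) = 0$; applying the operator $\pi$ to kill the diagonal blocks produces the first term on the right of \eqref{prop:ptflag:eq2}. The correction term at $q = d+1$, assembled across $k = 1,\dots,d$, is exactly the upper block-column of the product $X_0\Lambda_1 + \Lambda_0 X_1$, and by skew-symmetry its $(d+1,k)$-blocks are the transposed negatives, which yields the second matrix in \eqref{prop:ptflag:eq2}. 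The main obstacle is purely bookkeeping: one must track block indices consistently and check that the equations read off from the $(k,q)$- and $(q,k)$-blocks are consistent, which is automatic from the tangent-space constraint $X_k(k,q) = -X_q(k,q)$ of Proposition~\ref{prop:tangentFlag}. No new ideas beyond the two preceding lemmas are required.
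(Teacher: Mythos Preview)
Your proposal is correct and follows precisely the approach the paper takes: the proposition is stated without an explicit proof because it is an immediate consequence of combining the decomposition $\dot{Y}=T_1+T_2-T_3$ with Lemmas~\ref{lem:ptT2} and~\ref{lem:ptT3}, exactly as you outline. The block bookkeeping you sketch (reading off the $(k,q)$-block of each $T_i$, applying the skew-symmetry relations, and recognizing the commutator and the $X_0\Lambda_1+\Lambda_0X_1$ correction) matches the paper's implicit argument and the subsequent vectorization step.
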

Before we proceed, we remark that in particular if $X = {\Lambda}$, then $[X,{\Lambda}] = 0$ and \eqref{prop:ptflag:eq2} reduces to the geodesic equation \eqref{prop:gedoesic:eq1}.

Next we re-write each term of \eqref{prop:ptflag:eq1} using \eqref{eq:vectorization}. This leads to 
\begin{align*}
\vect \left(  X(k,s) {\Lambda}(s,q) \right) &= ({\Lambda}(s,q)^\tp \otimes \I_{m_k}) \vect ( X(k,s)) =   -({\Lambda}(q,s) \otimes \I_{m_k}) \vect ( X(k,s)) \\
\vect \left( {\Lambda}(s,k)^\tp X(q,s)^\tp \right) &= (\I_{m_q} \otimes {\Lambda}(s,k)^\tp) \vect (X(q,s)^\tp) = (\I_{m_q} \otimes {\Lambda}(k,s)) \vect (X(s,q)) 
\end{align*}
and hence \eqref{prop:ptflag:eq1} becomes  
\small
\begin{align*}
 2\vect \left( \dot{X}(k,q) \right) &= - \sum_{\substack{1 \le s \le d+1 \\ s \ne k, q}} \left(  
  (\I_{m_q} \otimes {\Lambda}(k,s)) \vect (X(s,q)) + ({\Lambda}(q,s) \otimes \I_{m_k}) \vect ( X(k,s)) 
  \right) \\
&+ \delta_{q,d+1} \sum_{\substack{1 \le s \le d \\ s\ne k}} \left( 
(\I_{m_{d+1}} \otimes {\Lambda}(k,s) ) \vect (X(s,d+1)) -({\Lambda}(d+1,s) \otimes \I_{m_k}) \vect ( X(k,s)) 
\right),
\end{align*}\normalsize
which using relations $X(p,q) = -X(q,p)^\tp$ and $\vect(A^\tp) = K^{(m,n)} \vect (A)$\footnote{$K^{(m,n)}$ is the commutation matrix defined in \eqref{eq:commutation}.}, can be written  as 
\begin{equation}\label{eq:pt1}
\begin{bmatrix}
\vect (\dot{X}(1,2)) \\
\vect (\dot{X}(1,3)) \\
\cdots \\
\vect (\dot{X}(d-1,d))
\end{bmatrix} = \Phi(t) \begin{bmatrix}
\vect ({X}(1,2)) \\
\vect ({X}(1,3)) \\
\cdots \\
\vect ({X}(d,d+1))
\end{bmatrix} 
\end{equation}
for some $\binom{n}{2} \times \binom{n}{2}$ matrix function $\Phi(t)$. Now according to Theorem~\ref{thm:PBseries}, \eqref{eq:pt1} can be solved by Peano--Baker series associated to the coefficient matrix $\Phi(t)$. 

We again take $d = 2$ for example. In this case, we write 
\[
\Lambda (t) = \begin{bmatrix}
0 & A(t) & B(t) \\
-A^\tp (t) & 0 & C(t) \\
-B^\tp (t) & -C^\tp(t) & 0 
\end{bmatrix},\quad A(t)\in \mathbb{R}^{m_1 \times m_2},B(t)\in \mathbb{R}^{m_1 \times m_3},C(t)\in \mathbb{R}^{m_2 \times m_3},
\]
\[
X (t) = \begin{bmatrix}
0 & U(t) & V(t) \\
-U^\tp (t) & 0 & W(t) \\
-V^\tp (t) & -W^\tp(t) & 0 
\end{bmatrix},\quad U(t)\in \mathbb{R}^{m_1 \times m_2},V(t)\in \mathbb{R}^{m_1 \times m_3},W(t)\in \mathbb{R}^{m_2 \times m_3}.
\]
Then we have 
\scriptsize{
\[
\proj^{\mathbb{T}}_{c(t)} ( T_1(t) ) = 2  V(t)  \left( 
\begin{bmatrix}
0 & -\dot{U}(t) & -\dot{V}(t) \\
-\dot{U}^\tp(t) & 0 & 0  \\
-\dot{V}^\tp(t) & 0 & 0
\end{bmatrix}, 
\begin{bmatrix}
0 & \dot{U}(t) & 0 \\
\dot{U}^\tp(t) & 0 & -\dot{W}(t) \\
0 & -\dot{W}^\tp(t) & 0
\end{bmatrix} 
\right) V(t)^\tp, 
\]
\[
\begin{split}
\proj^{\mathbb{T}}_{c(t)} ( T_3 (t) ) = 
V(t) \left( 
\begin{bmatrix}
0 & 0 & -({A}(t) W(t) + U(t) {C}(t) ) \\
0& 0 & 0  \\
-({A}(t) W(t) + U(t) {C}(t) )^\tp & 0 & 0
\end{bmatrix}, \right. \\
\left.
\begin{bmatrix}
0 & 0 & 0\\
0 & 0 & {A}(t)^\tp V(t) + U(t)^\tp {B}(t) \\
0 & V(t)^\tp {A}(t)  +  {B}(t)^\tp U(t)  & 0
\end{bmatrix} 
\right)  V(t)^\tp.
\end{split}
\]
\[
\begin{split}
\proj^{\mathbb{T}}_{c(t)} ( T_2 (t) ) =   V(t) 
\left( 
\begin{bmatrix}
0 & {B}(t) W(t)^\tp - V(t) {C}(t)^\tp & -{A}(t) W(t) + U(t) {C}(t) \\
W(t) {B}(t)^\tp - {C}(t) V(t)^\tp & 0 & 0  \\
-W(t)^\tp {A}(t)^\tp  + {C}(t) ^\tp U(t)^\tp  & 0 & 0
\end{bmatrix},   \right. \\
\left.
\begin{bmatrix}
0 & -{B}(t) W(t)^\tp + V(t) {C}(t)^\tp & 0\\
-W(t) {B}(t)^\tp + {C}(t) V(t)^\tp & 0 & {A}(t)^\tp V(t) - U(t)^\tp {B}(t)  \\
0 & V(t)^\tp {A}(t) -  {B}(t)^\tp U(t)  & 0
\end{bmatrix} 
\right)  V(t)^\tp.
\end{split}
\]
}
\normalsize
Hence the system for $X(t)$ to be a parallel transport is given by 
\begin{align*}
2\dot{U}(t) &=-  V(t) {C}(t)^\tp +  {B}(t) W(t)^\tp  = (-{C}(t) \otimes \I_{m_1} )\vect (V(t))   + (\I_{m_2} \otimes {B}(t))K^{(m_2,m_3)} \vect (W(t)) , \\
\dot{V}(t) &= U(t) {C}(t) = ({C}(t)^\tp \otimes \I_{m_1}) \vect(U(t)), \\ 
\dot{W}(t) &= -U(t)^\tp {B}(t) = - ({B}(t)^\tp \otimes \I_{m_2}) \vect(U(t)).
\end{align*}
Hence we have 
\[
\begin{bmatrix}
\vect(\dot{U}(t)) \\
\vect(\dot{V}(t)) \\
\vect(\dot{W}(t)) \\
\end{bmatrix} = \begin{bmatrix}
0 & -\frac{1}{2} {C}(t) \otimes \I_{m_1} & \frac{1}{2} ( \I_{m_2} \otimes {B}(t))K^{(m_2,m_3)} \\
{C}(t)^\tp \otimes \I_{m_1} & 0 & 0 \\
-{B}(t) \otimes \I_{m_2} & 0 & 0
\end{bmatrix}
\begin{bmatrix}
\vect(U(t)) \\
\vect(V(t)) \\
\vect(W(t)) \\
\end{bmatrix}.
\]
\section{Parallel transport with respect to the modified embedding} 
Now we proceed to discuss the parallel transport of a tangent vector along a curve on a flag manifold. Again we parametrize a curve $c(t)$ on $\Flag(n_1,\dots, n_d;n)$ as \eqref{eq:newmodelcurve}. Let $Y(t)$ be a vector field on $\Flag(n_1,\dots, n_d;n)$ along the curve $c(t)$. Then we may correspondingly parametrize $Y(t)$ as 
\begin{equation}\label{eq:newmodelvectorfield}
Y(t) =  V (t) (X(t) J_1 - J_1 X(t),\dots, X(t) J_{d+1} - J_{d+1} X(t) ) V(t)^\tp,
\end{equation}
where $X(t) = (X(j,k))_{j,k=1}^{d+1,d+1}\in \mathfrak{so}(n)$ is the partition of $X(t)$ with respect to $n = m_1 + \cdots + m_{d+1}$ and $X(k,k)(t) \equiv 0, k =1,\dots, d+1$. We notice that $\dot{Y}(t) = T_1(t) + T_2(t) - T_3(t)$ where 
\begin{align}
T_1(t) &= V(t) (\dot{X}(t) J_1 - J_1 \dot{X}(t),\dots, \dot{X}(t) J_{d+1} - J_{d+1} \dot{X}(t)) V^\tp(t), \\
T_2(t) &= V(t) ( {\Lambda}(t) X(t) J_1 +  J_1 X(t) {\Lambda}(t),\dots, {\Lambda}(t) X(t) J_{d+1} +  J_{d+1} X(t) {\Lambda}(t)) V^\tp(t),\\
T_3(t) &= V(t) ( {\Lambda}(t) J_1 X(t) + X(t) J_1 {\Lambda}(t) ,\dots,  {\Lambda}(t) J_{d+1} X(t) + X(t) J_{d+1} {\Lambda}(t)) V^\tp(t).
\end{align}

Similar computations in proofs of Lemmas~\ref{lem:ptT2} and \ref{lem:ptT3} lead to the following 
\begin{lemma}\label{lem:newmodelptTi}
Let $c(t), \Lambda(t), Y(t), X(t), T_1(t), T_2(t), T_3(t)$ be as above. We have 
\begin{enumerate}
\item $T_1(t) \in \mathbb{T}_{c(t)} \Flag(n_1,\dots, n_d;n)$.
\item
$\proj_{c(t)}^{\mathbb{T}} (T_2(t)) = V(t) (W_1(t),\dots, W_{d+1}(t)) V^\tp(t)$, where 
\[
W_k(p,q) = \begin{cases}
\sum_{s=1}^{d+1}  X(k,s) {\Lambda}(s,q) - \sum_{s=1}^{d+1}  {\Lambda}(s,k)^\tp X(q,s)^\tp,~\text{if}~p=k,q\ne k, \\ 
\sum_{s=1}^{d+1}  {\Lambda}(s,p)^\tp X(k,s)^\tp  - \sum_{s=1}^{d+1}  X(p,s) {\Lambda}(s,k) ,~\text{if}~q=k,p\ne k, \\ 
0,~\text{otherwise}.
\end{cases}
\]
\item $\proj_{c(t)}^{\mathbb{T}} (T_3(t)) = 0$.
\end{enumerate}
\end{lemma}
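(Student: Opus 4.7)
\textbf{Proof proposal for Lemma~\ref{lem:newmodelptTi}.} The strategy is to mirror the calculations already done in Lemmas~\ref{lem:ptT2} and \ref{lem:ptT3} for the classical embedding, but exploiting the fact that the modified embedding treats all $d+1$ indices symmetrically; this symmetry is precisely what will force $\proj^{\mathbb{T}}_{c(t)}(T_3(t))$ to vanish.

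For part (1), the plan is to observe that $\dot{X}(t) \in \mathfrak{so}(n)$ inherits the block structure of $X(t)$, in particular $\dot{X}(k,k)(t) \equiv 0$ for $k=1,\dots, d+1$. Applying the identity \eqref{eq:basicalculation} with $\Lambda$ replaced by $\dot{X}(t)$, each component $\dot{X}J_k - J_k\dot{X}$ has precisely the block pattern described in Proposition~\ref{prop:newmodeltangentFlag}, so the tuple $T_1(t)$ sits in $\mathbb{T}_{c(t)}\Flag(n_1,\dots, n_d;n)$.

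For part (2), I would first expand $\Lambda X J_k + J_k X \Lambda$ block-by-block exactly as in the proof of Lemma~\ref{lem:ptT2}: since $J_k$ is the diagonal sign matrix with $(p,p)$-entry $(2\delta_{pk}-1)\I_{m_p}$, and since $J_kX\Lambda = (\Lambda X J_k)^\tp$, the $(p,q)$ block becomes
\[
(2\delta_{pk}-1)\!\!\sum_{s=1}^{d+1} X(p,s)\Lambda(s,q) + (2\delta_{qk}-1)\!\!\sum_{s=1}^{d+1} \Lambda(s,p)^\tp X(q,s)^\tp.
\]
Specializing to $p=k\ne q$ and to $p=k$, $q\leftrightarrow k$ (now legal even for $k=d+1$, which is the only new input over Lemma~\ref{lem:ptT2}), I would read off from the projection formula \eqref{prop:newmodelprojection:tangent} that $W_k(k,q) = \tfrac12(\Lambda_k(k,q) - \Lambda_q(k,q))$ equals the stated expression. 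All other blocks vanish because $T_2$ has zero $(p,q)$-block in the $k$-th component unless $p=k$ or $q=k$.

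For part (3), which is the step where the modified embedding genuinely differs from the classical one, I would carry out the analogue of the calculation in Lemma~\ref{lem:ptT3}. Using $XJ_k\Lambda = (\Lambda J_k X)^\tp$, the $(p,q)$-block of $\Lambda J_k X + X J_k\Lambda$ works out to
\[
\sum_{l=1}^{d+1}(2\delta_{kl}-1)\bigl[X(p,l)\Lambda(l,q) + \Lambda(l,p)^\tp X(q,l)^\tp\bigr].
\]
The main observation is that this expression, restricted to the $(k,q)$-block with $k\ne q$, depends on the index $k$ only through the diagonal coefficient $(2\delta_{kl}-1)$; moreover, the terms with $l=k$ or $l=q$ are automatically zero since $X(k,k)=\Lambda(k,k)=X(q,q)=\Lambda(q,q)=0$. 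Consequently the $(k,q)$-block of $\Lambda J_k X + X J_k\Lambda$ coincides with the $(k,q)$-block of $\Lambda J_q X + X J_q\Lambda$: both equal $-\sum_{l\ne k,q}[X(k,l)\Lambda(l,q)+\Lambda(l,k)^\tp X(q,l)^\tp]$. Plugging this into the projection formula \eqref{prop:newmodelprojection:tangent}, which applies $\tfrac12(\Lambda_k(k,q)-\Lambda_q(k,q))$, the two contributions cancel and the $(k,q)$-block of the projection vanishes for every pair $k\ne q$ in $\{1,\dots,d+1\}$. This gives $\proj^{\mathbb{T}}_{c(t)}(T_3(t))=0$.

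I expect the main obstacle to be merely bookkeeping: keeping careful track of the signs $(2\delta_{pk}-1)$ and $(2\delta_{ql}-1)$ and verifying that the diagonal-block vanishing of $X$ and $\Lambda$ eliminates the contributions from $l=k$ and $l=q$. The conceptual novelty, relative to Lemmas~\ref{lem:ptT2} and \ref{lem:ptT3}, is entirely the extra symmetry provided by the additional factor $\O(n)$ corresponding to index $d+1$, which allows the $(k,d+1)$-blocks (previously non-trivial in Lemma~\ref{lemma:projectionT3}) to be cancelled in the same way as the other off-diagonal blocks.
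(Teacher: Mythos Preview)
Your approach is correct and essentially the same as the paper's, which simply states ``Similar computations in proofs of Lemmas~\ref{lem:ptT2} and \ref{lem:ptT3} lead to the following'' without writing out the details; you have correctly identified that the only new ingredient is the symmetry in the index $d+1$, which makes the $(k,d+1)$-blocks behave like all other off-diagonal blocks and hence forces $\proj^{\mathbb{T}}_{c(t)}(T_3(t))=0$.

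One small correction in part~(2): it is not true that the $k$-th component of $T_2$, namely $\Lambda X J_k + J_k X\Lambda$, has vanishing $(p,q)$-block for $p,q\ne k$; that block equals $-(X\Lambda+\Lambda X)(p,q)$, which is generically nonzero. The conclusion $W_k(p,q)=0$ for $p,q\ne k$ still holds, but for the simpler reason that the tangent space description in Proposition~\ref{prop:newmodeltangentFlag} forces those blocks to zero under projection regardless of the input. This does not affect the rest of your argument.
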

Now we recall that $Y(t)$ is a parallel transport along the curve $c(t)$ if and only if $\proj_{c(t)}(\dot{Y}(t)) = 0$. Combining this with Lemma~\ref{lem:newmodelptTi}, we may derive the equation for parallel transport. 
\begin{proposition}[Parallel transport along any curve]\label{prop:newmodelpt}
Let $c(t), \Lambda(t), Y(t), X(t)$ be as above. The vector field $Y(t)$ along $c(t)$ is a parallel transport if and only if 
\begin{equation}\label{eq:prop:newmodelpt}
\dot{X}(t) = \frac{1}{2} \pi \left([X(t) , {\Lambda}(t)] \right),
\end{equation}
where $\pi(A)$ is the matrix obtained by setting all diagonal blocks equal to zero for $A\in \mathfrak{so}(n)$.
\end{proposition}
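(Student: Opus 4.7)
The plan is to combine the three claims of Lemma~\ref{lem:newmodelptTi} to reduce the parallel transport condition $\proj^{\mathbb{T}}_{c(t)}(\dot{Y}(t))=0$ to a manageable block-matrix identity, and then translate that identity back into the single equation $\dot X=\tfrac12\pi([X,\Lambda])$.

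First I would use $\dot Y(t)=T_1(t)+T_2(t)-T_3(t)$ together with Lemma~\ref{lem:newmodelptTi}: item (1) says $T_1(t)\in\T_{c(t)}\Flag(n_1,\dots,n_d;n)$, so it is unaffected by the projection; item (3) says $\proj^{\mathbb{T}}_{c(t)}(T_3(t))=0$; and item (2) gives an explicit formula for $\proj^{\mathbb{T}}_{c(t)}(T_2(t))=V(t)(W_1,\dots,W_{d+1})V(t)^\tp$. Consequently the parallel transport equation becomes, component by component,
\[
\bigl(\dot X(t)J_k-J_k\dot X(t)\bigr)+W_k(t)=0,\qquad k=1,\dots,d+1.
\]

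Next I would unpack both summands at the block level. Using $J_k(r,r)=2\delta_{kr}-1$, the $(p,q)$-block of $\dot X J_k-J_k\dot X$ equals $2(\delta_{kq}-\delta_{kp})\dot X(p,q)$; in particular it vanishes unless exactly one of $p,q$ is $k$, and for $p=k$, $q\ne k$ it equals $-2\dot X(k,q)$. On the other side, the formula for $W_k(k,q)$ in Lemma~\ref{lem:newmodelptTi}, after applying $\Lambda(s,k)^\tp=-\Lambda(k,s)$ and $X(q,s)^\tp=-X(s,q)$, collapses to
\[
W_k(k,q)=\sum_{s=1}^{d+1}X(k,s)\Lambda(s,q)-\sum_{s=1}^{d+1}\Lambda(k,s)X(s,q)=[X,\Lambda]_{k,q},
\]
that is, precisely the $(k,q)$-block of the commutator $[X(t),\Lambda(t)]$. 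The case $q=k$, $p\ne k$ gives the same equation by the symmetry of $W_k$ and the skew-symmetry of the commutator, so no new constraints arise.

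Equating the $(k,q)$-blocks for $q\ne k$ therefore yields $\dot X(k,q)=\tfrac12[X,\Lambda]_{k,q}$ for every $k$ and every $q\ne k$. Since $X(t)\in\mathfrak{so}(n)$ is constrained by $X(k,k)\equiv 0$, the diagonal-block equations are vacuous, and assembling all off-diagonal-block equations gives exactly $\dot X=\tfrac12\pi([X,\Lambda])$. There is no conceptual obstacle; the only care required is bookkeeping of transposes and signs when recognizing the expression for $W_k$ as the off-diagonal part of a commutator, which is why the statement takes the clean form with $\pi$.
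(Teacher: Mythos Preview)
Your proposal is correct and follows essentially the same approach as the paper: the paper's own proof consists only of the sentence pointing to Lemma~\ref{lem:newmodelptTi} and the condition $\proj^{\mathbb{T}}_{c(t)}(\dot Y(t))=0$, and your write-up simply fills in the block-by-block computation that this entails, correctly recognizing $W_k(k,q)$ as the $(k,q)$-block of $[X,\Lambda]$ and matching it against the $-2\dot X(k,q)$ coming from $T_1$.
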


In particular, if $c(t) = V(t) (J_1,\dots, J_{d+1}) V^\tp(t)$ is a geodesic, then Proposition~\ref{prop:newmodelgedoesic} implies $\dot{V}(t) = V(t) A$ for some constant matrix $A\in \mathfrak{so}(n)$, from which we obtain the following characterization of a parallel transport along a geodesic in $\Flag(n_1,\dots, n_{d};n)$.
\begin{corollary}[Parallel transport along a geodesic]
If $c(t) = V(t) (J_1,\dots, J_{d+1}) V^\tp(t)$ is the geodesic curve passing through $V(0) (J_1,\dots, J_{d+1}) V^\tp(0)$ with the tangent direction 
\[
V(0) (A J_1 - J_1A, \dots, A J_{d+1} - J_{d+1}A) V^\tp(0),\quad A\in \mathfrak{so}(n),A(k,k) = 0, k=1,\dots, d+1,
\]
then the parallel transport $Y(t)$ of 
\[
Y(0) =V(0)  (BJ_1- J_1B,\dots, BJ_{d+1}- J_{d+1}B) V^\tp(0),\quad B\in \mathfrak{so}(n),B(k,k) = 0, k=1,\dots, d+1,
\]
is 
\[
Y(t) =V(t) (X(t)J_1- J_1X(t),\dots, X(t)J_{d+1}- J_{d+1}X(t))  V^\tp(t),
\]
where 
\[
\dot{X}(t) = \frac{1}{2} \pi \left( [X(t), A] \right),\quad X(0) = B,
\]
where $\pi(A)$ is the matrix obtained by setting all diagonal blocks equal to zero for $A\in \mathfrak{so}(n)$.
\end{corollary}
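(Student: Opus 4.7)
The plan is to realize the corollary as a direct specialization of Proposition~\ref{prop:newmodelpt} once the curve is known to be a geodesic. First I would invoke Proposition~\ref{prop:newmodelgedoesic}: since $c(t)$ is a geodesic, the skew-symmetric curve $\Lambda(t)$ defined by $\dot V(t) = V(t)\Lambda(t)$ is constant, i.e.\ $\Lambda(t) \equiv \Lambda(0)$. I would then match this constant to the data $A$ appearing in the statement of the corollary. Differentiating $c(t) = V(t)(J_1,\dots,J_{d+1})V(t)^\tp$ at $t=0$ and using $\dot V(0) = V(0)\Lambda(0)$ together with $\Lambda(0) \in \mathfrak{so}(n)$ gives
\[
\dot c(0) = V(0)\bigl(\Lambda(0)J_1 - J_1\Lambda(0),\dots,\Lambda(0)J_{d+1} - J_{d+1}\Lambda(0)\bigr)V(0)^\tp,
\]
so comparing with the prescribed tangent direction forces $\Lambda(0) = A$. (The hypothesis $A(k,k)=0$ is exactly the condition from Proposition~\ref{prop:newmodeltangentFlag} guaranteeing this representation is consistent.) Therefore $\Lambda(t) \equiv A$ along the geodesic.

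Next I would turn to the vector field $Y(t)$. By Proposition~\ref{prop:newmodeltangentFlag}, any vector field along $c$ admits the representation
\[
Y(t) = V(t)\bigl(X(t)J_1 - J_1 X(t),\dots,X(t)J_{d+1}-J_{d+1}X(t)\bigr)V(t)^\tp
\]
with $X(t) \in \mathfrak{so}(n)$ having vanishing diagonal blocks. Comparing with the prescribed initial condition $Y(0) = V(0)(BJ_k - J_kB)V(0)^\tp$ yields $X(0) = B$.

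Finally I would apply Proposition~\ref{prop:newmodelpt}: the condition that $Y(t)$ be a parallel transport along $c(t)$ is exactly
\[
\dot X(t) = \tfrac12\,\pi\bigl([X(t),\Lambda(t)]\bigr),
\]
and substituting the identification $\Lambda(t) \equiv A$ from Proposition~\ref{prop:newmodelgedoesic} converts this into $\dot X(t) = \tfrac12\,\pi([X(t),A])$, which together with $X(0)=B$ is the ODE asserted in the corollary.

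There is essentially no hard step here: the corollary is a substitution argument, and the only thing to verify carefully is the bookkeeping that tangent directions at $t=0$ translate faithfully into the skew-symmetric data $A$ and $B$ appearing in the parametrizations of $V(t)$ and $Y(t)$. In particular one should double check that the uniqueness in Proposition~\ref{prop:gedoesic}\eqref{prop:gedoesic:item1} (and its analogue in Proposition~\ref{prop:newmodelgedoesic}) ensures $\Lambda(0)$ is unambiguously determined by the prescribed tangent direction, so that the identification $\Lambda(0) = A$ is forced rather than merely a choice. Once this is noted, the remainder of the proof is a direct appeal to Propositions~\ref{prop:newmodelgedoesic} and~\ref{prop:newmodelpt}.
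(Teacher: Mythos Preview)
Your proposal is correct and matches the paper's own argument essentially verbatim: the paper simply notes that Proposition~\ref{prop:newmodelgedoesic} forces $\Lambda(t)\equiv A$ along a geodesic and then substitutes this into the parallel-transport equation of Proposition~\ref{prop:newmodelpt}. Your additional bookkeeping (verifying that the initial tangent data pin down $\Lambda(0)=A$ and $X(0)=B$ uniquely) is a welcome clarification but not a different approach.
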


\end{document}